\newtheorem{Zae}{Zae}[section] 
\newtheorem{definition}[Zae]{Definition} 
\newtheorem{lemma}[Zae]{Lemma}
\newtheorem{prop}[Zae]{Proposition}
\newtheorem{theorem}[Zae]{Theorem}
\newtheorem{main}{THEOREM} 
\newtheorem{thm}[Zae]{Theorem} 
\newtheorem{coro}[Zae]{Corollary} 
\newtheorem{example}[Zae]{Example}
\newtheorem{remark}[Zae]{Remark}
\newtheorem{notation}[Zae]{Notation}
\newcommand{\NN}{\mathbb{N}}
\newcommand{\qed}{\raisebox{-.8ex}
{$\Box$}}
\newenvironment{proof}
{\noindent{\bf Proof.}}
{\hfill \qed  \\ \medskip }
\author{Matthias Gr\"uninger}
\title{Special Moufang sets of finite dimension}
\begin{document}
\maketitle
\begin{abstract} We prove that a special Moufang set with abelian root subgroups derives from a quadratic Jordan division algebra 
if a certain finiteness condition is satisfied. 
\end{abstract}
\section{Introduction}

The concept of a Moufang set was by introduced by J. Tits in \cite{T}. This  
concept is essentially equivalent to that of a group with 
split BN-pair of rank one. We first recall the definition of a Moufang 
set.
\begin{definition}\rm A {\it Moufang set} consists of a set $X$ with 
$|X|\geq 3$ and a family $(U_x)_{x\in X}$ of subgroups of $\mathrm{Sym}X$ 
such that 
\begin{itemize}
\item[\rm (MS1)] For all $x\in X$, the group $U_x$ fixes $x$ and acts 
regularly on $X\setminus \{x\}$.
\item[\rm (MS2)] For all $x,y\in X$ and $g\in U_y$ we have
$U_x^g =U_{xg}$. 
\end{itemize} 

\end{definition}

\noindent
The group $G^\dagger:=\langle U_x\mid x\in X\rangle$ is called 
the {\it little projective group} of the Moufang set. It acts 
$2$-transitively on $X$, and for all $x\in X$ the group 
$U_x$ is a normal subgroup of the stabiliser of $x$ in $G^\dagger$ acting 
regularly on $X\setminus \{x\}$. 

\begin{definition}\rm
A Moufang set $\big( X, (U_x)_{x\in X}\big)$ is called {\it proper} if its little projective group is not sharply $2$-transitive on $X$.
\end{definition}

\noindent
The prototype of a Moufang set is the projective line 
$\mathrm{PG}_1(\Bbbk)$ over a field $\Bbbk$. 
In this case we have $G^\dagger = \mathrm{PSL}_2(\Bbbk)$. As we will explain, one can generalise 
this by taking a quadratic Jordan division algebra instead of a field. 
  
\noindent  
As shown in \cite{DW}, 
every Moufang set can be constructed in the following manner: 
Let $(U,+)$ be a not necessarily abelian group with 
$|U|>1$, $X:=U\dot{\cup}\{\infty\}$ and  
$\tau \in \mathrm{Sym}X$ an element interchanging $0$ and $\infty$.
For $a\in U$ we set
$$\alpha_a:X\to X: x\mapsto \left\{ \begin{array}{ll} x+a, & \text{if } 
x\in U, \\ \infty, & \text{if } x=\infty. \end{array}\right.$$
Moreover, we set 
$U_\infty:=\{\alpha_a\mid a\in U\}$, $U_0:=U_\infty^\tau$,  
$U_a:=U_0^{\alpha_a}$ for all $a \in U$ and $\mathbb{M}(U,\tau):=
\big( X, (U_x)_{x\in X}\big)$. 

\noindent
This is not always a Moufang set. 
Condition (MS1) is straightforward, but 
(MS2) is not always satisfied. There is a nice criterion assuring that 
$\mathbb{M}(U,\tau)$ is a Moufang set. For $a\in U^\#:=U\setminus \{0\}$ 
set $\mu_a:= \alpha_a \alpha_{-a\tau^{-1}}^\tau \alpha_{-(-a\tau^{-1})
\tau}$ and $h_a:=\tau \mu_a$. We call $\mu_a$ the {\it $\mu$-map} and 
$h_a$ the {\it Hua map} associated to $a$. Note that $\mu_a$ interchanges 
$0$ and $\infty$ and thus $h_a$ fixes both elements. 

\begin{thm}\label{MS}{\rm (\cite[Theorem 3.2]{DW})} $\mathbb{M}(U,\tau)$ is a Moufang set if and only if 
$h_a\in \mathrm{Aut}(U) $ for all $a\in U^\#$, i.e. $(b+c)h_a =
bh_a +ch_a$ for all $a,b,c\in U$ with $a\ne 0$. If this holds, 
we have $\mathbb{M}(U,\tau) =\mathbb{M}(U,\mu_a)$ for all $a\in U^\#$.
\end{thm}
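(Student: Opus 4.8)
The plan is to verify (MS1) and (MS2) for $\mathbb{M}(U,\tau)$ directly, cutting (MS2) down to a single family of assertions about the maps $\mu_a$ and then linking that family to the automorphism property of the $h_a$. Condition (MS1) holds unconditionally: $U_\infty$ acts regularly on $U=X\setminus\{\infty\}$ by construction, $U_0=U_\infty^\tau$ therefore acts regularly on $X\setminus\{0\}$ because $\tau$ swaps $0$ and $\infty$, and $U_a=U_0^{\alpha_a}$ then acts regularly on $X\setminus\{a\}$. So everything is in (MS2), and I would start with three routine reductions. From $\alpha_b\alpha_a=\alpha_{b+a}$ and $U_b=U_0^{\alpha_b}$, condition (MS2) for the elements of $U_\infty$ is automatic. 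Writing an element of $U_y$ with $y\in U^\#$ as $\beta^{\alpha_y}$, $\beta\in U_0$, and conjugating by $\alpha_y$, (MS2) for $U_y$ follows from (MS2) for $U_\infty$ and for $U_0$; hence $\mathbb{M}(U,\tau)$ is a Moufang set iff $U_x^\beta=U_{x\beta}$ for all $x\in X$ and all $\beta\in U_0$. Finally the case $x=0$ is trivial, and the cases $x\notin\{0,\infty\}$ follow from the case $x=\infty$ using that $U_0$ is transitive on $X\setminus\{0\}$; so the criterion becomes: $U_\infty^\beta=U_{\infty\beta}$ for every $\beta\in U_0$.

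Next I would pass to $\mu$-map language. With $c:=-a\tau^{-1}$ the definition reads $\mu_a=\alpha_a\,\beta_c\,\alpha_{-(c\tau)}$, where $\beta_c:=\alpha_c^\tau\in U_0$ satisfies $\infty\beta_c=c\tau$; as $a$ runs through $U^\#$ so does $c$, and every nontrivial element of $U_0$ is such a $\beta_c$. Since $\alpha_a$ and $\alpha_{-(c\tau)}$ lie in $U_\infty$, solving for $\beta_c$ yields $U_\infty^{\beta_c}=(U_\infty^{\mu_a})^{\alpha_{c\tau}}$, while $U_{\infty\beta_c}=U_{c\tau}=U_0^{\alpha_{c\tau}}$; so the criterion above is equivalent to $U_\infty^{\mu_a}=U_0$ for every $a\in U^\#$. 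On the other hand $h_a=\tau\mu_a$ gives $U_\infty^{h_a}=U_0^{\mu_a}$, and a short computation (namely $\alpha_d^{h_a}=\alpha_{dh_a}$, using that $h_a$ fixes $0$ and $\infty$) shows that the hypothesis $h_a\in\mathrm{Aut}(U)$ is equivalent to $h_a$ normalising $U_\infty$, i.e. to $U_0^{\mu_a}=U_\infty$. Thus the theorem reduces to showing that, over all $a\in U^\#$, the conditions $U_\infty^{\mu_a}=U_0$ and $U_0^{\mu_a}=U_\infty$ are equivalent.

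Closing this last gap is, I expect, the only genuinely nonformal point: requiring merely that $\tau$ interchange $0$ and $\infty$ builds an asymmetry into the two conditions. The way around it is the identity $\mu_a^{-1}=\mu_{a'}$ with $a':=(-a\tau^{-1})\tau\in U^\#$, which I would prove simply by inverting the product $\alpha_a\,\beta_c\,\alpha_{-(c\tau)}$ and recognising the result, factor by factor, as the product defining $\mu_{a'}$. Because $a\mapsto a'$ is a bijection of $U^\#$, the family $\{\mu_a\mid a\in U^\#\}$ is closed under inversion, so $U_\infty^{\mu_a}=U_0$ holding for all $a$ is the same as $U_0^{\mu_a}=U_\infty$ holding for all $a$. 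Combining everything: $h_a\in\mathrm{Aut}(U)$ for all $a\in U^\#$ $\iff$ $U_0^{\mu_a}=U_\infty$ for all $a$ $\iff$ $U_\infty^{\mu_a}=U_0$ for all $a$ $\iff$ (MS2). For the final assertion, note that $\mathbb{M}(U,\mu_a)$ has the same translation subgroup $U_\infty$ as $\mathbb{M}(U,\tau)$ and that its analogue of $U_0$ is $U_\infty^{\mu_a}$; once we know this equals $U_0$, all root subgroups $U_b=U_0^{\alpha_b}$ agree, so $\mathbb{M}(U,\tau)=\mathbb{M}(U,\mu_a)$.
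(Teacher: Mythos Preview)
The paper does not supply its own proof of this statement; it is quoted from \cite[Theorem~3.2]{DW} and used as a black box, so there is no in-paper argument to compare against. Your sketch is correct and follows essentially the standard De Medts--Weiss line: reduce (MS2) first to the action of $U_0$, then to the single family of conditions $U_\infty^{\mu_a}=U_0$ for $a\in U^\#$; identify the hypothesis $h_a\in\mathrm{Aut}(U)$ with $U_\infty^{h_a}=U_\infty$, equivalently $U_0^{\mu_a}=U_\infty$; and bridge the two families via the inversion identity $\mu_a^{-1}=\mu_{a'}$ with $a'=(-a\tau^{-1})\tau$, which you verify by direct computation. The deduction of $\mathbb{M}(U,\tau)=\mathbb{M}(U,\mu_a)$ from $U_\infty^{\mu_a}=U_0$ is also fine.
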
 

\noindent
Note that if $\mathbb{M}(U,\tau)$ is a Moufang set, then 
$\mathbb{M}(U,\tau)=\mathbb{M}(U,\mu_a)$ for 
every $a\in U^\#$. 

\begin{definition}\rm Suppose that $\mathbb{M}(U,\tau)$ is a Moufang set. Then the group $H:=\langle \mu_a\mu_b\mid a,b\in U^\#\rangle$ is called the {\it Hua subgroup}
of $\mathbb{M}(U,\tau)$.\end{definition}

\noindent
One can show that $H:=G^\dagger_{0,\infty}$, so $\mathbb{M}(U,\tau)$ is proper if and only if $H\ne 1$.

\noindent
By Theorem \ref{MS} one can see that every quadratic Jordan division algebra 
gives rise to a Moufang set.
Recall the definition of a quadratic Jordan division algebra.

\begin{definition}\label{qj}
\rm Let $\Bbbk$ be a field, $J$ a $\Bbbk$-vector space, $Q: J\to \mathrm{End}_k(J)
:a\mapsto Q_a$ 
a map and $1\in J^\#$. 
Then $(J,Q,1)$ is called a {\it quadratic  
Jordan division algebra} if 
\begin{enumerate}
\item The map $Q$ is quadratic, i.e. $Q_{r\cdot a} =r^2 \cdot Q_a$ for 
all $a\in J$ and all $r\in \Bbbk$, and the map $(a,b)\mapsto Q_{a,b}:=
Q_{a+b}-Q_a-Q_b$ is $\Bbbk$-bilinear.
\item $Q_a$ is invertible for all $a\in J^\#$. We set $a^{-1}:=aQ_a^{-1}$ 
for $a\in J^\#$.
\item For all $a,b\in J$ we have 
\begin{itemize} 
\item[\rm (QJ1)] $Q_1=\mathrm{id}$.
\item[\rm (QJ2)] $Q_a V_{a,b}=V_{b,a} Q_a$, where $V_{a,b}\in 
\mathrm{End}_k(J)$ is defined by $cV_{a,b}:=bQ_{a,c}$ for all $c\in J$.
\item[\rm (QJ3)] $Q_{aQ_b} =Q_b Q_a Q_b$.
\end{itemize} 
 
\end{enumerate} 
\end{definition}

\begin{remark}\rm The concept of a quadratic Jordan algebra was introduced 
in \cite{M}. Normally, one requires that (QJ1-3) hold strictly, 
i.e. continue to hold in all scalar extensions of $J$. However, by the 
main theorem of  
\cite{G16}, this is not necessary for quadratic Jordan division algebras. 
\end{remark}

\noindent
The connection between Moufang sets and quadratic Jordan division algebras 
is established in the following theorem. 

\begin{thm}\label{MQ}{\rm (\cite[Theorem 4.1 and 4.2]{DW})} 
Let $(J,Q,1)$ be a quadratic Jordan division algebra over a field $k$. 
For $a\in J^\#$ we set $a\tau:=-a^{-1}$. 
Then 
$\mathbb{M}(J):=\mathbb{M}(J,\tau)$ is a Moufang set.
Moreover, we have $\tau =\mu_1$ and $h_a=Q_a$ for all $a\in J^\#$.
\end{thm}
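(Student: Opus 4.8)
The plan is to derive the whole statement from Theorem~\ref{MS}. By that theorem, $\mathbb{M}(J,\tau)$ is a Moufang set as soon as the Hua map $h_a=\tau\mu_a$ lies in $\mathrm{Aut}(J,+)$ for every $a\in J^{\#}$, so it is enough to prove the sharper claim $h_a=Q_a$: each $Q_a$ is $k$-linear and in particular additive, and the equality $\tau=\mu_1$ will then follow from the case $a=1$. Before any computation I would record the elementary inversion identities valid in a quadratic Jordan division algebra, all immediate from Definition~\ref{qj}: for $a\in J^{\#}$ one has $Q_{-a}=Q_a$ (hence $(-a)^{-1}=-a^{-1}$), $(a^{-1})^{-1}=a$, $Q_{a^{-1}}=Q_a^{-1}$ and $a^{-1}Q_a=a$. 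Using these, $(a\tau)\tau=-(-a^{-1})^{-1}=a$ for $a\in J^{\#}$ while $\tau$ interchanges $0$ and $\infty$, so $\tau^2=\mathrm{id}_X$, i.e.\ $\tau^{-1}=\tau$; and $-a\tau^{-1}=a^{-1}$, $-(-a\tau^{-1})\tau=a$.

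Feeding these values into the definition of $\mu_a$ gives
\[
\mu_a=\alpha_a\,\alpha_{a^{-1}}^{\tau}\,\alpha_a=\alpha_a\,\tau\,\alpha_{a^{-1}}\,\tau\,\alpha_a .
\]
Next I would simply trace a point of $X$ through these five factors, using at each step that $J$ is a division algebra to determine whether one lands in $J^{\#}$ or at $\infty$. This yields $0\mu_a=\infty$, $\infty\mu_a=0$, and
\[
x\mu_a=a-\bigl(a^{-1}-(x+a)^{-1}\bigr)^{-1}\qquad\text{for }x\in J\setminus\{0,-a\}.
\]
Hence $h_a=\tau\mu_a$ fixes $0$ and $\infty$; moreover $0h_a=0=0Q_a$, and tracing directly $a^{-1}h_a=(-a)\mu_a=a=a^{-1}Q_a$, while for the remaining arguments $x\in J^{\#}\setminus\{a^{-1}\}$,
\[
x h_a=(-x^{-1})\mu_a=a-\bigl(a^{-1}-(a-x^{-1})^{-1}\bigr)^{-1}.
\]
Thus ``$h_a=Q_a$'' is equivalent to the single identity
\[
a^{-1}-(a-x^{-1})^{-1}=(a-xQ_a)^{-1}\qquad\text{for all }a,x\in J^{\#}\text{ with }x\ne a^{-1},
\]
all inverses occurring here being legitimate precisely because the degenerate arguments have been excluded (in particular $a-xQ_a\ne 0$ iff $x\ne a^{-1}$).

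I expect \textbf{this last identity to be the main obstacle}; it is the quadratic-Jordan analogue of Hua's identity. I would establish it formally from the axioms (QJ1)--(QJ3) together with the bilinearity of $(a,b)\mapsto Q_{a,b}$ and the inversion relations above: after the substitution $x\mapsto x^{-1}$ one rewrites both sides through the operator $Q_a$ and the homotope structure at $a$, and reduces by repeated application of the fundamental formula (QJ3) $Q_{bQ_c}=Q_cQ_bQ_c$ and of (QJ2), carrying the polarised terms $Q_{a,b}$ along since $Q$ is quadratic rather than linear. A convenient consistency check is the associative specialisation $xQ_a=axa$, in which the identity collapses to the classical Hua identity $a-\bigl(a^{-1}-(a-x^{-1})^{-1}\bigr)^{-1}=axa$, provable from $x(ax-1)^{-1}=(xa-1)^{-1}x$. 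Because $J$ is a division algebra and the degenerate cases were isolated beforehand, no Zariski-density or generic-point argument is needed: the identity holds verbatim for every admissible pair.

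Granting the identity, $h_a=Q_a$ holds on all of $X$ for every $a\in J^{\#}$, so $h_a$ is additive and Theorem~\ref{MS} shows that $\mathbb{M}(J,\tau)$ is a Moufang set. Finally, taking $a=1$ and invoking (QJ1) gives $h_1=Q_1=\mathrm{id}_X$, whence $\mu_1=\tau^{-1}h_1=\tau$, which finishes the argument.
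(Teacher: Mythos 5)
Your bookkeeping is correct: the inversion identities, the computation $\mu_a=\alpha_a\,\tau\,\alpha_{a^{-1}}\,\tau\,\alpha_a$, the treatment of the degenerate points, and the observation that $h_a=Q_a$ (hence additivity, hence Theorem \ref{MS} applies, hence $\mu_1=\tau$ via (QJ1)) all reduce the theorem to the single identity $(a-xQ_a)^{-1}=a^{-1}-(a-x^{-1})^{-1}$. But that identity \emph{is} the theorem: it is the quadratic-Jordan form of Hua's identity, and you do not prove it — you label it the main obstacle and offer only a plan (``rewrite through $Q_a$ and the homotope structure, reduce by repeated application of (QJ2)/(QJ3)''), plus an associative consistency check, which proves nothing about the general Jordan case. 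Deriving it from (QJ1)--(QJ3) is genuinely nontrivial: the standard arguments go through a substantial chain of auxiliary operator identities (the QJ-identities catalogued in \cite{J}) and facts about isotopes/inverses, and one must also worry that such derivations usually use linearizations valid only when the axioms hold strictly, which for the non-strict Definition \ref{qj} is exactly what \cite{G16} is cited for. So there is a genuine gap at the crux.

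For comparison: the paper itself gives no proof of this statement — it is imported from \cite[Theorems 4.1 and 4.2]{DW}, where precisely the identity you postpone is established from the Jordan-theoretic identities. Your outline essentially retraces that route but stops short of its only hard step.
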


\noindent
All examples of proper Moufang sets with abelian root groups that are known are isomorphic 
to $\mathbb{M}(J)$ for some quadratic Jordan division algebra $J$.
Therefore it is a major conjecture that every proper Moufang set with abelian root groups 
comes from a quadratic Jordan algebra.

\noindent
If $J$ is a quadratic Jordan division algebra, $\mathbb{M}(J,\tau)$ satisfies the identity  
$(-a)\tau =-a\tau$ for all $a\in J^\#$. 
Such Moufang sets are called {\it special}.

\begin{definition}\rm A Moufang set $\mathbb{M}(U,\tau)$ is called {\it 
special} if $(-a)\tau =-a\tau$ for all $a\in U^\#$.
\end{definition}

\noindent
By \cite{S} a Moufang set with abelian root groups is special or improper.
Moroever, the projective lines over $\mathbb{F}_2$ and $\mathbb{F}_3$ are up to isomorphism the only 
improper special Moufang sets. 
Therefore, for Moufang sets with abelian root groups, the properties special and proper are almost 
the same. 

\medskip\noindent
Given a proper Moufang set $\mathbb{M}(U,\tau)$ with $U$ abelian, the problem is to 
recover the Jordan structure. 
As a first step we can find a field over which the potential 
Jordan algebra 
is defined. By \cite[Proposition 4.6(5)]{DS}
either $U$ is torsion-free and uniquely divisible or $U$ is 
an elementary abelian $p$-group for some prime $p$. We say 
$\mathrm{char}U =0$ in the first case and $\mathrm{char}U=p$ in 
the second case. 
Thus $U$ is a vector space over $\Bbbk$, where $\Bbbk=\mathbb{Q}$ if 
char$U=0$ and $\Bbbk=\mathbb{F}_p$ if char$U=p$. This field 
$\Bbbk$ is called the {\it prime field} of $\mathbb{M}(U,\tau)$. 

\medskip \noindent
By Theorem \ref{MS} we may suppose that $\tau =\mu_e$ for some $e\in U^\#$.

\begin{notation}\rm Let $\mathbb{M}(U,\tau)$ be a proper Moufang set 
with $U$ abelian and $\tau=\mu_e$ for some fixed $e\in U^\#$. 
In the following, $h_0$ will denote the zero map of $U$, and 
we set $h_{a,b}:=h_{a+b}-h_a-h_b$ for all $a,b\in U$. 
\end{notation}

\noindent
Considering Theorem \ref{MQ}, the map $\mathscr{H}: U\to \mathrm{End}_\Bbbk(U):
a \mapsto h_a$ is a natural candidate for the quadratic 
map $Q$.
The map $\mathscr{H}$ depends on the choice of the neutral element $e\in U^\#$. If we choose another element $c\in U^\#$, then for $a\in U$ the Hua map $h_a^c$ is given by 
$h_a^c =h_c^{-1} h_a$. The triple $(U, \mathscr{H}^c,c)$ with $\mathscr{H}^c: U\to \mathrm{End}(U): a\mapsto h_a^c$ is called the $c$-{\it isotope} of $(U,\mathscr{H},e)$. 
The triple $(U,\mathscr{H},e)$ is a quadratic Jordan division algebra if and only if every $c$-isotope is a quadratic Jordan division algebra. 

\medskip\noindent
By \cite[Remark after 7.6.1]{DS2} all conditions for 
$(U, \mathscr{H},e)$ to be a 
quadratic Jordan division algebra but the biadditivity of $\mathscr{H}$ 
and Axiom (QJ2) are satisfied. 
In \cite{DS} it was proved that these conditions can be replaced by 
weaker conditions. In \cite{G22} the author of this paper proved 
that these conditions can be replaced by even weaker conditions. 
For the condition we are going to use in this paper we will need 
the following definition.

\begin{definition}\rm
Let $\mathbb{M}(U,\tau)$ be a special Moufang set with $U$ abelian and 
let $H$ be its Hua subgroup. We set $\mathscr{C}:= \{ T\in 
\mathrm{End}_H(U)\mid h_{aT}=T^2 h_a$ for all $a\in U\}$, the 
{\it centroid} of $\mathbb{M}(U,\tau)$. Moreover, we set 
$\mathscr{C}^*:=\mathscr{C}\cap \mathrm{GL}(U)$.
\end{definition}

\noindent
By \cite{DS}[Proposition 4.6(6)] $\mathscr{C}$ contains the prime field of $U$. 
Thus if char$U=0$, then $|\mathscr{C}|$ is infinite.

\noindent
We will make use of the following criterion:

\begin{theorem}\label{criterion} Let $\mathbb{M}(U,\tau)$ be a special Moufang set 
with $U$ abelian. Suppose that there is a subfield $\Bbbk$ of the centroid and an 
element $1\ne \lambda\in 
\Bbbk^*$ such  
that $h_{e,\lambda\cdot a} = \lambda\cdot h_{e,a}$ for all 
$a\in U^\#$. Then $(U,\mathscr{H},e)$ is a quadratic Jordan division 
algebra. 
\end{theorem}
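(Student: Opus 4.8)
The plan is to reduce the assertion, via the results of \cite{DS2}, \cite{DS} and \cite{G22}, to a single residual property of $\mathscr H$, and then to extract that property from the homogeneity hypothesis by a bootstrapping argument. By the remark after 7.6.1 of \cite{DS2}, all axioms in Definition \ref{qj} hold for $(U,\mathscr H,e)$ except possibly the biadditivity of $\mathscr H$ and Axiom (QJ2); in particular $h_e=\mathrm{id}$ and (QJ3) are available. By \cite{G22} these two remaining requirements follow from a weaker statement about $\mathscr H$, so the first step is to make that reduction explicit in the language of the centroid --- which is precisely why $\mathscr C$ was introduced --- and thereby to reduce the theorem to proving that $b\mapsto h_{e,b}$ is additive, equivalently that $\mathscr H$ is biadditive. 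Observe that since $\Bbbk\subseteq\mathscr C$, the definition of the centroid already yields $h_{\mu a}=\mu^2 h_a$ and hence $h_{\mu a,\mu b}=\mu^2 h_{a,b}$ for all $\mu\in\Bbbk$ and $a,b\in U$; what is genuinely missing is the separate additivity --- and one-variable $\Bbbk$-homogeneity --- of $h_{a,b}$.

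For the second step I would record the immediate consequences of $h_{e,\lambda a}=\lambda h_{e,a}$. Since $h_e=\mathrm{id}$ and $\lambda\in\mathscr C^{*}$ we get $h_{e+\lambda a}=\mathrm{id}+\lambda h_{e,a}+\lambda^{2}h_a$, which is exactly the value that biadditivity of $\mathscr H$ would predict. Replacing $a$ by $\lambda^{\pm1}a$ and iterating gives $h_{e,\lambda^{n}a}=\lambda^{n}h_{e,a}$ for all $n\in\Z$; and comparing $h_{e+\mu a}$ with $h_{\mu(\mu^{-1}e+a)}=\mu^{2}h_{\mu^{-1}e+a}$ yields, for every $\mu\in\Bbbk^{*}$, the identity $h_{e,\mu a}=\mu^{2}h_{\mu^{-1}e,a}$, whence along the powers of $\lambda$ one also obtains homogeneity in the first slot, $h_{\lambda^{n}e,a}=\lambda^{n}h_{e,a}$. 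The remaining structural tool is the polarisation of (QJ3) in its first argument: since every Hua map lies in $\mathrm{Aut}(U)$, polarising $h_{ah_c}=h_c h_a h_c$ gives, with no appeal to biadditivity, the identity
\[ h_{ah_c,\,bh_c}=h_c\,h_{a,b}\,h_c\qquad(a,b,c\in U,\ c\neq 0),\]
which transports additivity and homogeneity information from the base point $e$ to the images $Uh_c$ and back.

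The crux --- and the step I expect to be hardest --- is to upgrade the cyclic, ``$\lambda$-adic'' homogeneity just obtained to full $\Bbbk$-linearity and to genuine additivity of $b\mapsto h_{e,b}$: a priori this map is not known to be additive, so one cannot simply extend by linearity over $\Bbbk$. I would attack this by a coupled induction in which additivity and one-variable $\Bbbk$-homogeneity of $h_{a,b}$ are established together, using the polarised Hua identity to manufacture new additive relations, the centroid to rescale them, and the fact that $\Bbbk$ is generated as a field over the prime field --- which already lies in $\mathscr C$ --- by the single element $\lambda$. The characteristics $\operatorname{char}U\in\{2,3\}$ need separate care, since there $\lambda\neq1$ forces $|\Bbbk|\geq4$ and the prime field alone is too small to run the argument, whereas in characteristic $0$ one may take $\lambda$ of infinite multiplicative order (indeed any rational $\neq1$, since $\QQ\subseteq\mathscr C$), which makes the density part of the bootstrap routine. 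Once $b\mapsto h_{e,b}$ is additive, the reduction of the first step makes $(U,\mathscr H,e)$ a quadratic Jordan division algebra, completing the proof.
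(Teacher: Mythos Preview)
The paper's proof is two lines: \cite[Theorem~2.6]{G22} shows directly that the hypothesis $h_{e,\lambda a}=\lambda h_{e,a}$ for a single $\lambda\neq1$ in the centroid forces $a\mapsto h_a$ to be quadratic, and then \cite[Theorem~4.6]{G22} concludes that $(U,\mathscr H,e)$ is a quadratic Jordan division algebra. The theorem is recorded here purely as a packaged corollary of \cite{G22}; no new argument is supplied.

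Your proposal invokes \cite{G22} for the second half of this reduction but then attempts to reprove the first half --- the passage from the homogeneity hypothesis to biadditivity of $\mathscr H$ --- by hand. The preliminary identities you record in your second paragraph are correct, but at the decisive point you do not actually carry anything out: you announce a ``coupled induction'' using the polarised Hua identity and centroid rescaling, without specifying an induction hypothesis, the quantity being inducted on, or even a single inductive step. None of the tools you list manufactures the \emph{additive} relation $h_{e,a+b}=h_{e,a}+h_{e,b}$ from the purely \emph{multiplicative} data $h_{e,\lambda^n a}=\lambda^n h_{e,a}$; the polarised identity $h_{ah_c,bh_c}=h_c\,h_{a,b}\,h_c$ transports relations already in hand but does not create additivity out of homogeneity. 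This gap is exactly the content of \cite[Theorem~2.6]{G22}, whose proof is not a bootstrap of the shape you sketch. Two smaller issues: your assertion that additivity of $b\mapsto h_{e,b}$ is ``equivalent'' to full biadditivity of $\mathscr H$ is unjustified, since biadditivity demands additivity of $a\mapsto h_{a,b}$ for \emph{every} $b$, not only at the base point; and your claim that $\lambda\neq1$ forces $|\Bbbk|\geq4$ in characteristic~$3$ is false, as $\Bbbk=\F_3$ with $\lambda=-1$ already satisfies the hypothesis.
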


\begin{proof}
By \cite[Theorem 2.6]{G22} it follows that the map $a\mapsto h_a$ is quadratic. The claim now follows from \cite[Theorem 4.6]{G22}.
\end{proof}

\noindent
The main result of this paper is Theorem \ref{main}:
\begin{main} Let $\mathbb{M}(U,\tau)$ be a special Moufang set 
with $U$ abelian and let $H$ be its Hua subgroup. 
Suppose that there is a subfield $\Bbbk$ of $\mathrm{End}_H(U)$ with 
\begin{itemize}
\item[\rm (i)] $\dim_\Bbbk U <\infty$, 
\item[\rm (ii)] $|\Bbbk\cap \mathscr{C}| =\infty$, 
\item[\rm (iii)] $U$ is generated by the set $\{eh_a \mid a\in U^\#\}$ as an $\mathrm{End}_H(U)$-module and
\item[\rm (iv)] there is $\lambda\in \Bbbk^* \cap \mathscr{C}$ such that $\lambda-1\in \Bbbk^* \cap \mathscr{C}$ as well.
\end{itemize}
Then $(U,\mathscr{H},e)$ is a quadratic Jordan division algebra. 
\end{main}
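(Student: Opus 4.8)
The plan is to reduce the Main Theorem to the already-established criterion in Theorem~\ref{criterion}. By that theorem it suffices to produce a subfield $\Bbbk_0$ of the centroid $\mathscr{C}$ and an element $1\ne\lambda_0\in\Bbbk_0^*$ with $h_{e,\lambda_0\cdot a}=\lambda_0\cdot h_{e,a}$ for all $a\in U^\#$; in fact we already have the candidate $\lambda$ from hypothesis (iv) lying in $\Bbbk^*\cap\mathscr{C}$, and hypothesis (ii) guarantees the subfield $\Bbbk\cap\mathscr{C}$ is infinite, so the remaining task is entirely to prove the linearity identity $h_{e,\lambda\cdot a}=\lambda\cdot h_{e,a}$. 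First I would set up the relevant polynomial-type machinery: for a fixed $a\in U^\#$ consider the map $\mathscr{C}\cap\Bbbk\to\mathrm{End}_H(U)$ sending $t\mapsto h_{e+ t\cdot a}$. Since $t\in\mathscr{C}$ we have $h_{t\cdot a}=t^2 h_a$, so $h_{e+t\cdot a}=h_e+t^2 h_a+h_{e,t\cdot a}=1+t^2h_a+h_{e,t\cdot a}$, and the genuinely unknown piece is the ``cross term'' $t\mapsto h_{e,t\cdot a}$. Because $U$ is finite-dimensional over $\Bbbk$ by (i), the image of $t\mapsto h_{e+t\cdot a}$ lies in a finite-dimensional $\Bbbk$-space, so this is a map from the infinite set $\Bbbk\cap\mathscr{C}$ into a finite-dimensional space.

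The core of the argument is to show that $t\mapsto h_{e,t\cdot a}$ is in fact $\Bbbk$-linear (i.e. equals $t\cdot h_{e,a}$) by establishing that it is ``polynomial of bounded degree'' and then using infinitely many scalars to pin the degree down. Concretely, I would first prove a functional equation. Using that $e+ t\cdot a\in U^\#$ for all but possibly one value of $t$ (division algebra / the fact that $U_x$ acts regularly), one can apply the Hua identity machinery — the known weak Jordan identities from \cite{DS}, \cite{G22} that hold in every special Moufang set with $U$ abelian — to the element $e+t\cdot a$. In particular the identity $h_{b}h_{ch_b}=h_{(ch_b)}$-type relations, together with $h_{e}=\mathrm{id}$, yield a relation among $h_{e+t\cdot a}$, $h_a$, and the cross terms that is quadratic in the scalar $t$. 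Evaluating this relation at enough distinct $t\in\Bbbk\cap\mathscr{C}$ (here (ii) is used in an essential way) forces the map $t\mapsto h_{e,t\cdot a}$ to agree with its "linear part" $t\cdot h_{e,a}$. This is where hypothesis (iv), the existence of $\lambda$ with $\lambda-1$ also a unit in $\Bbbk\cap\mathscr{C}$, enters: it lets one relate $h_{e,\lambda\cdot a}$, $h_{\lambda\cdot a,(1-\lambda)\cdot a}=\lambda(1-\lambda)h_{a,a}$-type expansions and $h_{e+\lambda\cdot a}$ versus $h_{(e+\lambda\cdot a)+((1-\lambda)\cdot a - (1-\lambda)\cdot a)}$, i.e. to run an "additivity in the isotope at the point $e+\lambda\cdot a$" argument. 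Hypothesis (iii), the generation of $U$ by $\{eh_a\mid a\in U^\#\}$ as an $\mathrm{End}_H(U)$-module, is needed to transfer the identity obtained for the distinguished vector $e$ to a statement valid on all of $U$, so that the hypothesis of Theorem~\ref{criterion} ($h_{e,\lambda\cdot a}=\lambda\cdot h_{e,a}$ for \emph{all} $a\in U^\#$) really does hold.

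Once the identity $h_{e,\lambda\cdot a}=\lambda\cdot h_{e,a}$ is in hand for this particular $\lambda\in\Bbbk^*\cap\mathscr{C}$ (with $\lambda\ne 1$ available since $\Bbbk\cap\mathscr{C}$ is infinite, or directly from $\lambda\ne 1$ forced by $\lambda-1$ being a unit), Theorem~\ref{criterion} applies verbatim with the subfield of the centroid taken to be $\Bbbk\cap\mathscr{C}$ (or the prime field together with $\lambda$), and we conclude that $(U,\mathscr{H},e)$ is a quadratic Jordan division algebra.

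\medskip\noindent
I expect the main obstacle to be the middle step: extracting from the weak Jordan identities a genuine \emph{quadratic} (in the scalar) functional equation for $t\mapsto h_{e+t\cdot a}$ and then controlling the cross term $h_{e,t\cdot a}$. The subtlety is that a priori $t\mapsto h_{e,t\cdot a}$ need only be additive-plus-something; upgrading it to $\Bbbk$-homogeneity of degree $1$ is exactly the content that is not formal, and the finiteness condition (i) together with the infiniteness of $\Bbbk\cap\mathscr{C}$ in (ii) is what makes a Vandermonde/interpolation argument go through — without (i) one could not conclude that a map taking infinitely many values and satisfying a degree bound is polynomial, and without (ii) one would not have enough scalars to force the coefficients of the "non-polynomial part" to vanish. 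A secondary technical point will be bookkeeping around the finitely many bad scalars $t$ for which $e+t\cdot a\notin U^\#$ (there is at most one such $t$, namely when $t\cdot a=-e$), which is harmless but must be handled cleanly when invoking Hua maps of $e+t\cdot a$.

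\qed
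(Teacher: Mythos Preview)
Your reduction to Theorem~\ref{criterion} is correct, and you correctly identify that the whole burden of the proof is establishing $h_{e,\lambda\cdot a}=\lambda\cdot h_{e,a}$. You also correctly flag the ``middle step'' as the obstacle and correctly see that hypotheses (i)--(iv) should feed into a Vandermonde-type argument. However, the heart of your proposal --- that the weak Jordan identities of \cite{DS}, \cite{G22} applied to $e+t\cdot a$ produce a relation ``quadratic in the scalar $t$'' among $h_{e+t\cdot a}$, $h_a$, and the cross terms --- is precisely what is \emph{not} available. All such identities already presuppose control over $h_{e,t\cdot a}$; without biadditivity of $\mathscr{H}$ (which is what you are trying to prove) there is no a priori polynomial bound on $t\mapsto h_{e,t\cdot a}$, and the mere fact that the target $\mathrm{End}_\Bbbk(U)$ is finite-dimensional does not force a map from $\Bbbk\cap\mathscr{C}$ to be polynomial.

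The paper closes exactly this gap by a route you do not anticipate. It passes to an ultrapower $\tilde U=U^{\mathbb N}/\mathscr F$ and $\tilde\Bbbk=\Bbbk^{\mathbb N}/\mathscr F$, picks a transcendental $t\in\tilde\Bbbk$ in the centroid, and equips $\tilde\Bbbk$ with a real-valued valuation trivial on $\Bbbk$ with $v(t)=1$ (Lemma~\ref{val}). This supplies a topology in which the \emph{geometric series} identity $(e-ta)^{-1}=\sum_{n\geq 0} t^n a^n$ converges (Proposition~\ref{series}). Feeding this into the single exact identity of Lemma~\ref{Lemma 1}, namely $(e-ta)^{-1}h_{ta,b}=((e-ta)^{-1}-e)h_{e,b}$, one obtains an equality of convergent series; comparing $\omega$-values and then running the Vandermonde argument over $\Bbbk\cap\mathscr{C}$ (Lemma~\ref{Vandermonde}) isolates the individual terms and yields $a^2 h_{e,\lambda b}=\lambda\,a^2 h_{e,b}$ in $U$. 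Hypothesis (iii) then extends this from squares to all of $U$, and passing to isotopes gives the full identity needed for Theorem~\ref{criterion}. In short: the missing ingredient in your sketch is a mechanism to make ``infinitely many $t$'' interact with a single algebraic identity, and the paper manufactures that mechanism analytically via the ultrapower topology and the geometric series, not via any finite polynomial relation coming from the Hua identities.
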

\noindent
Note that the second condition is automatically satisfied if char$U=0$ and the third  and fourth if char$U\ne 2$. In (iv), we can take $\lambda=-1$ if char$U\ne 2$. Thus we have

\begin{coro} Suppose that $\mathbb{M}(U,\tau)$ is a special Moufang set with $U$ abelian and torsion-free. If $U$ is finite-dimensional over a subfield $\Bbbk$ of the centroid of $\mathbb{M}(U,\tau)$, 
then $\mathbb{M}(U,\tau)$ arises from a quadratic Jordan division algebra.
\end{coro}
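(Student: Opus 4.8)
The plan is to derive the Corollary as the obvious special case of the main Theorem (Theorem~\ref{main}), applied to the given subfield $\Bbbk$; the whole proof is just the verification of its four hypotheses (i)--(iv). First I would record the set-up: since $U$ is torsion-free, $\mathrm{char}\,U=0$, so the prime field of $\mathbb{M}(U,\tau)$ is $\mathbb{Q}$; by \cite[Proposition~4.6(6)]{DS} this gives $\mathbb{Q}\subseteq\mathscr{C}$, and since $\Bbbk$ is a subfield of $\mathrm{End}_H(U)$ (with unit $\mathrm{id}_U$) of characteristic $0$, also $\mathbb{Q}\subseteq\Bbbk$. As usual, I may assume $\tau=\mu_e$ by Theorem~\ref{MS}.

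Conditions (i), (ii) and (iv) are then essentially formal. Condition (i), $\dim_\Bbbk U<\infty$, is exactly the assumption of the Corollary. Since the Corollary assumes $\Bbbk\subseteq\mathscr{C}$, we have $\Bbbk\cap\mathscr{C}=\Bbbk\supseteq\mathbb{Q}$, which is infinite, so (ii) holds. For (iv) I would take $\lambda=-1$: because $\mathbb{Q}\subseteq\Bbbk\cap\mathscr{C}$ and $\mathrm{char}\,U=0\neq 2$, both $-1$ and $\lambda-1=-2$ are nonzero elements of $\Bbbk\cap\mathscr{C}$, hence lie in $\Bbbk^*\cap\mathscr{C}$; this is precisely (iv).

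The remaining point (iii) is the only one needing a genuine identity. Here I would invoke the linearization identity $eh_{e,a}=2a$, valid for all $a\in U^\#$ in a special Moufang set (it is the linearized form of the identity describing $eh_a$ as a square, and this is where $\mathrm{char}\,U\neq 2$ enters). Let $M$ be the $\mathrm{End}_H(U)$-submodule of $U$ generated by $\{eh_c\mid c\in U^\#\}$. Since $eh_{e,a}=eh_{e+a}-eh_e-eh_a$ and each of the three summands lies in $M$ --- being either a generator $eh_c$ with $c\in U^\#$, or $eh_0=0$ when $e+a=0$ --- we obtain $2a\in M$; multiplying by $\tfrac12\in\mathbb{Q}\subseteq\mathscr{C}\subseteq\mathrm{End}_H(U)$ yields $a\in M$ for every $a\in U^\#$, so $M=U$ and (iii) holds. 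With (i)--(iv) checked, the main Theorem says that $J:=(U,\mathscr{H},e)$ is a quadratic Jordan division algebra, and Theorem~\ref{MQ} then identifies $\mathbb{M}(J)$ with $\mathbb{M}(U,\mu_e)=\mathbb{M}(U,\tau)$, so $\mathbb{M}(U,\tau)$ arises from a quadratic Jordan division algebra, as claimed.

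I expect no serious obstacle in the Corollary itself --- the whole mathematical weight rests on the main Theorem, which I am taking as given. The only substantive inputs to the reduction are the standard facts about special Moufang sets that $\mathbb{Q}\subseteq\mathscr{C}$ and $eh_{e,a}=2a$ (and, implicitly in $-1\in\mathscr{C}$, that $h_{-a}=h_a$), and the one thing to stay alert to is the repeated reliance on $\mathrm{char}\,U=0$: it underlies $\mathbb{Q}\subseteq\mathscr{C}\subseteq\Bbbk$, the nonvanishing of $\lambda-1=-2$ in (iv), and the use of $\tfrac12$ in (iii) --- all of which is exactly what torsion-freeness of $U$ provides.
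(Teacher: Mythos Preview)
Your proposal is correct and follows exactly the approach the paper intends: the Corollary is deduced from Theorem~\ref{main} by checking (i)--(iv), with (ii) and (iv) immediate from $\mathbb{Q}\subseteq\Bbbk\cap\mathscr{C}$ and $\lambda=-1$, and (iii) via the identity $eh_{e,a}=2a$ (which is precisely the content of Lemma~\ref{squares} in the paper). The paper itself does not write out a separate proof for the Corollary but simply remarks, just before stating it, that conditions (ii)--(iv) hold automatically when $\mathrm{char}\,U=0$ (resp.\ $\neq 2$); your write-up is a faithful expansion of that remark.
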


\noindent
As an application, we will discuss Moufang sets whose little projective 
group can be embedded into a sharply triply transitive group. 
A sharply triply transitive group whose point stabilisers have 
normal subgroup acting regularly arises from an algebraic structure called 
a {\it KT-nearfield}, named after Karzel and Tits. It is conjectured 
that every KT-nearfield is actually a commutative field with slightly 
modified multiplication. In \cite{Kerby91b} Kerby proved this for nearfields 
of characteristic different from $2$ having dimension $2$ over their 
kernel. We are able to strengthen this result:

\begin{main} Suppose that $F$ is a KT-nearfield with char$F\ne 2$ such 
that $F$ has finite dimension over its kernel. Then $F$ is a 
Dickson nearfield coupled to a commutative field 
which we also call $F$, and the 
corresponding sharply triply transitive group transitive is sandwiched between 
$\mathrm{PSL}_2(F)$ and $\mathrm{P\Gamma L}_2(F)$.
\end{main}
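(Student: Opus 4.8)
\section*{Proof plan for the final Theorem}

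The plan is to recast the KT\nobreakdash-nearfield $F$ as a special Moufang set, verify the hypotheses of Theorem \ref{main}, and then use the ambient sharply triply transitive group to upgrade the resulting Jordan division algebra to a commutative field. Recall the standard dictionary: a KT\nobreakdash-nearfield $F$ is precisely what coordinatises a sharply triply transitive group $G$ on $X:=F\,\dot{\cup}\,\{\infty\}$ whose stabiliser $G_\infty$ is the sharply $2$\nobreakdash-transitive group $x\mapsto x\circ a+b$ ($a\in F^\#$, $b\in F$) and whose normal subgroup $U$ of translations $x\mapsto x+b$ acts regularly on $F$; the multiplication $\circ$ of $F$ is recovered from the regular action of the stabiliser of $0$ in $G_\infty$ on $F^\#$, and $G_{0,\infty}$ is exactly the group of right multiplications $x\mapsto x\circ v$ ($v\in F^\#$). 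Since the additive group of a nearfield is abelian, $U=(F,+)$ is abelian, and letting $\tau\in\mathrm{Sym}\,X$ be the involution of the KT\nobreakdash-datum (so $0\tau=\infty$) the Hua maps of $\mathbb{M}(U,\tau)$, lying in $G^\dagger_{0,\infty}\le G_{0,\infty}$, have the form $x\mapsto x\circ(eh_a)$; in particular they are additive, so $\mathbb{M}(U,\tau)$ is a Moufang set with abelian root groups whose little projective group $G^\dagger$ is contained in $G$ and whose Hua subgroup $H$ lies in the multiplicative group of $F$. By \cite{S} it is special or improper, and since $\mathrm{char}\,F\ne 2$ excludes $\mathrm{PG}_1(\mathbb{F}_2)$, the only improper possibility is $\mathrm{PG}_1(\mathbb{F}_3)$, where $F=\mathbb{F}_3$ is already a commutative field and the assertion is immediate. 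So from now on $\mathbb{M}(U,\tau)$ is proper and special, and (Theorem \ref{MS}) we may take $\tau=\mu_e$ and work with the data $(U,\mathscr{H},e)$.

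The next step is to apply Theorem \ref{main} with $\Bbbk$ the kernel of the nearfield $F$. Left multiplication $\ell_k\colon x\mapsto k\circ x$ by an element $k$ of the kernel is additive (left distributivity) and commutes with every Hua map $x\mapsto x\circ(eh_a)$ (kernel elements associate), so $\ell_k\in\mathrm{End}_H(U)$ and $\Bbbk$ is a subfield of $\mathrm{End}_H(U)$; the hypothesis that $F$ is finite-dimensional over its kernel is exactly condition (i). Condition (iii) holds since $\mathrm{char}\,U\ne 2$, and condition (iv) holds with $\lambda=-1$ for the same reason, both as observed in the remark after Theorem \ref{main} (for (iv): $-1$ and $-2=(-1)+(-1)$ lie in the prime field $\subseteq\mathscr{C}$ and $-2\ne 0$). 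For condition (ii), if $\mathrm{char}\,F=0$ then $\mathscr{C}$ contains the prime field $\mathbb{Q}\subseteq\Bbbk$ and $|\Bbbk\cap\mathscr{C}|=\infty$ is automatic; if $\mathrm{char}\,F=p$ and $F$ is infinite, then $\Bbbk$ is infinite, and one checks the centroid relation $h_{a\ell_k}=\ell_k^2h_a$ directly from the defining properties of kernel elements together with $h_a\colon x\mapsto x\circ(eh_a)$, so that $\Bbbk\subseteq\mathscr{C}$ and again $|\Bbbk\cap\mathscr{C}|=|\Bbbk|=\infty$. In both of these cases Theorem \ref{main} applies and $(U,\mathscr{H},e)$ is a quadratic Jordan division algebra. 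If $\mathrm{char}\,F=p$ and $F$ is finite I would instead invoke Zassenhaus' classification of finite sharply triply transitive groups -- namely $\mathrm{PGL}_2(q)$ and, for odd squares $q$, the groups $M(q)$ -- both of which arise from Dickson nearfields and satisfy the sandwich statement.

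It remains, in the infinite case, to upgrade the quadratic Jordan division algebra $J=(U,\mathscr{H},e)$ to a commutative field. Here I would use that $G^\dagger=G^\dagger(\mathbb{M}(J))$ is contained in the sharply triply transitive group $G$: the stabiliser of $0$ in $G_\infty$ acts regularly on $J^\#$ and contains $H$, so it endows $J$ with an associative nearfield multiplication $\circ$ with $xh_a=x\circ(eh_a)$, and confronting this with the identity (QJ3) for $Q=\mathscr{H}$ forces $\circ$ to be commutative -- equivalently, a sharply triply transitive group of the form $\mathrm{PGL}_2$ requires its coordinate structure to be a commutative field. Renaming that field $F$, the original multiplication of $F$ is obtained from it by a Dickson coupling (the coupling being read off from the $\mu$-maps versus the field multiplication), so $G$ lies between $\mathrm{PSL}_2(F)=G^\dagger$ and $\mathrm{P\Gamma L}_2(F)$. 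I expect the two places where genuine work is needed to be this last step -- squeezing commutativity, and hence the Dickson description, out of the presence of the ambient sharply triply transitive group -- and the verification in positive characteristic that the kernel lies in the centroid; the remainder is the routine translation between the nearfield and Moufang-set languages.
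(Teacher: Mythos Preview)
Your overall architecture is the same as the paper's: pass to the Moufang set $\mathbb{M}(F,\tau)$, verify the hypotheses of Theorem~\ref{main} with $\Bbbk$ the kernel, and then upgrade the resulting Jordan structure to a commutative field using the ambient sharply $3$-transitive group. You also correctly identify the two pressure points. However, in both places your suggested mechanism is not the one that actually works.

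\medskip
\textbf{Centroid in positive characteristic.} You write that one checks $h_{a\ell_k}=\ell_k^2 h_a$ ``directly from the defining properties of kernel elements''. The defining property of a kernel element is left distributivity; it says nothing about how the KT-involution $\sigma$ acts on $k$. Since $h_b$ is right multiplication by $q_b=b^{-\sigma}b$ (Theorem~\ref{kt moufang}(b)), the centroid relation reads $x\cdot q_{k a}=k^2\cdot(x\cdot q_a)$, and expanding $q_{ka}=a^{-\sigma}k^{-\sigma}k a$ shows that you need $k^{-\sigma}=k$, i.e.\ $k^\sigma=k^{-1}$, and $k^2\in Z(F)$. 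Neither is a ``defining property''; both are the content of Kerby's structure theorem for $\Bbbk_\sigma$ (Theorem~\ref{ksigma} in the paper), together with the fact that $\Bbbk=\Bbbk_\sigma$ when $\mathrm{char}\,F\ne 2$. The paper invokes exactly this before running the computation. Without it, your verification of condition~(ii) in odd positive characteristic does not go through.

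\medskip
\textbf{Commutativity from (QJ3).} The step ``confronting $x\mapsto x\circ(eh_a)$ with (QJ3) forces $\circ$ to be commutative'' is not justified. In nearfield terms (QJ3) says $q_{a\cdot q_b}=q_b\,q_a\,q_b$, which does not yield commutativity of the pseudo-squares, and your alternative phrasing (``a sharply triply transitive group of the form $\mathrm{PGL}_2$ requires its coordinate structure to be a commutative field'') is precisely what is to be proved. The paper instead uses the Zassenhaus property of $\mathbb{M}(J)$ (the Hua subgroup acts freely) and proves, via the Jordan identities QJ4$'$, QJ18 and QJ30, that all $Q$-operators commute (Theorem~\ref{abelian}); this is a genuine Jordan-algebraic argument, not a consequence of (QJ3) alone. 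From $Q_aQ_b=Q_bQ_a$ one gets that the pseudo-squares generate an abelian subgroup of $F^*$, and then the paper appeals to Kerby's results \cite{Kerby91} (Theorems~1.2 and~1.3 there) to conclude that $F$ is a Dickson nearfield coupled to a commutative field. Your outline should replace the (QJ3) claim by this route: Zassenhaus $\Rightarrow$ commuting $Q$-operators $\Rightarrow$ abelian pseudo-square group $\Rightarrow$ Kerby's Dickson conclusion. The final sandwich $\mathrm{PSL}_2(F)\le G\le \mathrm{P\Gamma L}_2(F)$ then follows from $G^\dagger\cong\mathrm{PSL}_2(F)$, $G^\dagger\trianglelefteq G$, and Dieudonn\'e's description of $\mathrm{Aut}\bigl(\mathrm{PSL}_2(F)\bigr)$.
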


\noindent
For the proof of our results we will use an identity involving geometric series. 
However, in order to define an infinite series, we will need an appropriate 
topology which is a priori not given. In order to get such a topology, 
we will extend the original Moufang set by an ultrapower. Therefore, 
we will discuss ultraproducts at first. 

\bigskip
\noindent
{\bf Acknowledgement:} The author likes to thank Pierre-Emmanuel Caprace, Theo Grundh\"ofer and Bernhard M\"uhlherr for their valuable 
comments.

\section{Ultraproducts of Moufang sets}
In this section we will introduce ultraproducts of Moufang sets.
For an introduction on ultraproducts see \cite[\S 4]{CK}

\begin{definition}\rm Let $S$ be a non-empty set. 
\begin{enumerate}
\item 
A subset $\mathscr{F}$ of the power set $\mathcal{P}(S)$ of $S$ is called a {\it filter} on 
$S$ 
if \begin{enumerate}
\item $S\in \mathscr{F}$ and $\emptyset\not\in \mathscr{F}$.
\item For all $A,B\in \mathscr{F}$ we have $A\cap B\in \mathscr{F}$.
\item For all $A\in \mathscr{F}$ and all $A\subseteq B \subseteq S$ 
we have $B\in\mathscr{F}$.
\end{enumerate}
\item A filter $\mathscr{F}$ is called {\it fixed} if 
$\bigcap_{A\in \mathscr{F}} A\ne \emptyset$ and {\it free} otherwise.
\item A filter $\mathscr{F}$ is called an {\it ultrafilter} if 
for all $A\subseteq S$ either $A\in \mathscr{F}$ or $S\setminus A 
\in \mathscr{F}$.
\end{enumerate}
\end{definition}

\begin{example}\label{filter}\rm
\begin{enumerate}
\item If $\emptyset \ne B\subseteq \mathscr{F}$, then 
$\mathscr{F}_B:=\{A\subseteq S\mid B\subseteq A\}$ is a fixed 
filter on $S$. It is an ultrafilter if and only if $|B|=1$.
\item If $S$ is infinite, then $\mathscr{F}:=\{A\subseteq S\mid 
|S\setminus A| <\infty\}$ is a free filter on $S$, but not an 
ultrafilter.
\end{enumerate}
\end{example}

\begin{theorem}
Let $S$ be a non-empty set. 
\begin{enumerate}
\item If 
 $\mathscr{F}$ a filter on $S$, then there is an ultrafilter 
 $\mathscr{F}^* $ on $S$ 
 with $\mathscr{F}\subseteq \mathscr{F}^*$. 
\item If $S$ is infinite, then there is a free ultrafilter $\mathscr{F}$ 
on $S$.
\end{enumerate}
\end{theorem}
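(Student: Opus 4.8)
The plan is to prove (1) by a Zorn's lemma argument and then deduce (2) by extending the cofinite filter of Example~\ref{filter}(2).

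For (1), I would consider the collection $\Phi$ of all filters on $S$ that contain $\mathscr{F}$, partially ordered by inclusion; it is nonempty since $\mathscr{F}\in\Phi$. First I would check that $\Phi$ is closed under unions of chains: if $\mathscr{K}\subseteq\Phi$ is a chain, then $\mathscr{G}:=\bigcup_{\mathscr{H}\in\mathscr{K}}\mathscr{H}$ satisfies $S\in\mathscr{G}$ and $\emptyset\notin\mathscr{G}$ (as $\emptyset$ lies in no member), and it is closed under supersets trivially; closure under finite intersections is the only nonroutine point, and it holds because any two sets $A,B\in\mathscr{G}$ already lie in a common member of the chain, which contains $A\cap B$. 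Hence every chain in $\Phi$ has an upper bound, and Zorn's lemma produces a maximal element $\mathscr{F}^*$.

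The crux is then to show that such a maximal $\mathscr{F}^*$ is an ultrafilter. Suppose for contradiction that $A\subseteq S$ with $A\notin\mathscr{F}^*$ and $S\setminus A\notin\mathscr{F}^*$. Then $A\cap B\ne\emptyset$ for every $B\in\mathscr{F}^*$ --- otherwise $B\subseteq S\setminus A$ would force $S\setminus A\in\mathscr{F}^*$ by upward closure. Consequently $\mathscr{F}^{**}:=\{C\subseteq S\mid A\cap B\subseteq C\text{ for some }B\in\mathscr{F}^*\}$ is a filter: the nonemptiness of the sets $A\cap B$ shows $\emptyset\notin\mathscr{F}^{**}$, upward closure is built in, and $(A\cap B_1)\cap(A\cap B_2)=A\cap(B_1\cap B_2)$ with $B_1\cap B_2\in\mathscr{F}^*$ gives closure under intersections. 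Since $C\supseteq A\cap C$ for $C\in\mathscr{F}^*$ and $A\supseteq A\cap S$, we get $\mathscr{F}^*\cup\{A\}\subseteq\mathscr{F}^{**}$, so $\mathscr{F}^{**}$ strictly enlarges $\mathscr{F}^*$ inside $\Phi$, contradicting maximality. Hence $\mathscr{F}^*$ is an ultrafilter containing $\mathscr{F}$.

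For (2), since $S$ is infinite the cofinite filter $\mathscr{F}=\{A\subseteq S\mid|S\setminus A|<\infty\}$ is a filter on $S$ by Example~\ref{filter}(2). Applying (1) yields an ultrafilter $\mathscr{F}^*$ with $\mathscr{F}\subseteq\mathscr{F}^*$. For each $x\in S$ the set $S\setminus\{x\}$ is cofinite, hence belongs to $\mathscr{F}^*$, so $x\notin\bigcap_{A\in\mathscr{F}^*}A$; as $x$ was arbitrary, $\bigcap_{A\in\mathscr{F}^*}A=\emptyset$ and $\mathscr{F}^*$ is free. I do not expect a genuine obstacle here: the only step needing care is ``maximal filter $\Rightarrow$ ultrafilter'', specifically verifying $\emptyset\notin\mathscr{F}^{**}$, and of course the whole argument rests on the axiom of choice through Zorn's lemma.
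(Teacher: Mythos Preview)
Your proof is correct and follows essentially the same approach as the paper: part~(a) is the standard Zorn's lemma argument (the paper simply cites \cite[Corollary 4.1.4]{CK} for the details you have written out), and part~(b) extends the cofinite filter of Example~\ref{filter}(b) to an ultrafilter via part~(a), exactly as in the paper.
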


\begin{proof}
\begin{enumerate}
\item This follows by Zorn's Lemma, see for example
 \cite[Corollary 4.1.4]{CK} for details. 
\item This follows by (a) and Example \ref{filter}(b).
\end{enumerate}
\end{proof}

\noindent
Now let $S$ be a non-empty set and $\mathscr{F}$ an ultrafilter on $S$. Suppose that for every $s\in S$ there is a group/ring/etc. $X_s$. 
We set $$I_\mathscr{F}:=\{ (x_s)_{s\in S}\in \prod_{s\in S} X_s \mid \{ s\in S\mid x_s=0\}\in \mathscr{F}\}.$$ Then $I_\mathscr{F}$ is normal subgroup/ideal/etc. of $\prod_{s\in S} X_s$. 
We will write $\prod_{s\in S} X_s/\mathscr{F}$ instead of $\prod_{s\in S} X_s/I_\mathscr{F}$ and $[ (x_s)_{s\in S}]_\mathscr{F}$ or just $[(x_s)_{s\in S}] $ for 
$(x_s)_{s\in S}+I_\mathscr{F}$. Note that if $X_s\ne 0 $ for all $s\in S$ and $0\ne x\in \prod_{s\in S} X_s/\mathscr{F}$, then there is a representative $(x_s)_{s \in S}$ of $x$ such that 
$x_s \ne 0$ for all $s\in S$. We call $\prod_{s\in S} X_s/\mathscr{F}$ the {\it ultraproduct} of the $X_s$ with respect to $\mathscr{F}$. If $X_s=X$ for all $s\in S$, then 
$X^S/\mathscr{F}$ is called the {\it ultrapower} of $X$ with respect to $\mathscr{F}$. In this case, the map $x\mapsto [(x)_{s\in S}]$ is an embedding of $X$ into 
$X^S/\mathscr{F}$, thus we may consider $X$ as a substructure of $X^S/\mathscr{F}$.

\begin{lemma}\label{union} Let $\mathscr{F}$ be an ultrafilter on a set $S$ and 
let $A\in \mathscr{F}$. Suppose that $A=\dot{\bigcup}_{i=1}^n A_i$ 
for some $n\geq 1$. Then there is exactly one $1\leq i \leq n$ with 
$A_i\in \mathscr{F}$.
\end{lemma}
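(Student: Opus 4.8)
The plan is to treat uniqueness and existence separately, both by elementary manipulations with the ultrafilter axioms; finiteness of $n$ enters only in the existence part.

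For uniqueness I would argue by contradiction. Suppose $A_i\in\mathscr{F}$ and $A_j\in\mathscr{F}$ for two distinct indices $i\neq j$. By the intersection-closure axiom of a filter, $A_i\cap A_j\in\mathscr{F}$. But since the union $A=\dot{\bigcup}_{k=1}^n A_k$ is disjoint, $A_i\cap A_j=\emptyset$, contradicting $\emptyset\notin\mathscr{F}$. Hence at most one of the $A_i$ lies in $\mathscr{F}$.

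For existence, the cleanest route combines the ultrafilter property with the finiteness of $n$. Suppose, for contradiction, that $A_i\notin\mathscr{F}$ for every $1\leq i\leq n$. Then, $\mathscr{F}$ being an ultrafilter, $S\setminus A_i\in\mathscr{F}$ for each $i$. Intersecting these finitely many members (using intersection-closure $n-1$ times) gives $\bigcap_{i=1}^n(S\setminus A_i)=S\setminus\bigcup_{i=1}^n A_i=S\setminus A\in\mathscr{F}$. On the other hand $A\in\mathscr{F}$ by hypothesis, so $A\cap(S\setminus A)=\emptyset\in\mathscr{F}$, a contradiction. Thus some $A_i$ belongs to $\mathscr{F}$, and together with uniqueness exactly one does. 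An equally valid alternative is induction on $n$: for $n=1$ there is nothing to prove; for $n\geq 2$, either $A_1\in\mathscr{F}$ and we are done, or $S\setminus A_1\in\mathscr{F}$, in which case $A\setminus A_1=A\cap(S\setminus A_1)\in\mathscr{F}$ is the disjoint union of $A_2,\dots,A_n$ and the induction hypothesis applies.

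The argument is routine, so there is no real obstacle; the only point worth flagging is that the finiteness of $n$ is genuinely needed — it is precisely what allows the conclusion that the intersection of the complements $S\setminus A_i$ is still a member of $\mathscr{F}$, since filters are closed only under finite intersections, and indeed the statement fails for countably infinite partitions of a member by a free ultrafilter.
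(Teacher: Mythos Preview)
Your proof is correct. The paper's own argument is precisely the inductive alternative you sketch at the end: it proceeds by induction on $n$, splitting off $A_{n+1}$ and using the ultrafilter dichotomy on it, whereas your primary argument handles existence directly by intersecting all the complements; both are standard and equally short.
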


\begin{proof}
Induction on $n$. The claim is trivially true for $n=1$. 
For the inductional step suppose that $A=\dot{\bigcup}_{i=1}^{n+1} A_i$.
If $A_{n+1}\in \mathscr{F}$, then $A_i\not\in \mathscr{F}$ 
for $1\leq i \leq n$, since otherwise 
$\emptyset =A_i\cap A_{n+1}\in \mathscr{F}$. If $A_{n+1}\not\in 
\mathscr{F}$, then $S\setminus A_{n+1}\in \mathscr{F}$, thus 
$\dot{\bigcup}_{i=1}^n A_i= A \cap (S\setminus A_{n+1})\in \mathscr{F}$. 
The claim now follows by inductional hypothesis. 
\end{proof}

\begin{lemma}
Suppose that $\Bbbk_s$ is a field for all $s\in S$. Then $\prod_{s\in S} \Bbbk_s/\mathscr{F}$ is also field.
\end{lemma}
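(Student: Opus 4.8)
The plan is to verify that $K:=\prod_{s\in S}\Bbbk_s/\mathscr{F}$ is a commutative unital ring with $1\neq 0$ in which every nonzero element is invertible. First I would note that $\prod_{s\in S}\Bbbk_s$ is a commutative unital ring and, as already observed, $I_\mathscr{F}$ is an ideal of it, so the quotient $K$ is again a commutative unital ring; moreover $\{s\in S:1_{\Bbbk_s}=0_{\Bbbk_s}\}=\emptyset\notin\mathscr{F}$, so the classes of $1$ and $0$ are distinct and $K$ is not the zero ring.

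It remains to produce inverses. Given $x\in K$ with $x\neq 0$, pick any representative $(x_s)_{s\in S}$ and set $A:=\{s\in S:x_s\neq 0\}$. Since $x\neq 0$ we have $S\setminus A=\{s:x_s=0\}\notin\mathscr{F}$, and because $\mathscr{F}$ is an ultrafilter this forces $A\in\mathscr{F}$; alternatively one may simply invoke the remark preceding Lemma \ref{union} to choose a representative of $x$ all of whose coordinates are nonzero. For $s\in A$ the field axioms for $\Bbbk_s$ furnish $x_s^{-1}$; define $y_s:=x_s^{-1}$ for $s\in A$ and $y_s:=0$ otherwise, and put $y:=[(y_s)_{s\in S}]$. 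Then $x_sy_s=1$ for every $s\in A$, so $\{s:x_sy_s-1=0\}\supseteq A\in\mathscr{F}$, which says exactly that $xy=[(1)_{s\in S}]=1$ in $K$. Hence $x$ is a unit, and $K$ is a field.

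The only step that genuinely uses that $\mathscr{F}$ is an ultrafilter rather than an arbitrary filter is the implication ``$x\neq 0\Rightarrow A\in\mathscr{F}$'', which relies on the dichotomy $A\in\mathscr{F}$ or $S\setminus A\in\mathscr{F}$; without it $K$ would merely be a reduced product and could well contain zero divisors. That is the conceptual heart of the statement, although in the present setting it is immediate, and everything else is a routine coordinatewise verification, so I anticipate no real obstacle.
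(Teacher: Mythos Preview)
Your argument is correct and is precisely the standard verification that an ultraproduct of fields is a field; the paper does not actually prove this lemma but simply cites it as well-known (referring to \cite[Exercise 4.1.30]{CK}), so your write-up supplies exactly the details the paper omits.
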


\begin{proof}
This is well-known, see for example \cite[Exercise 4.1.30]{CK}.
\end{proof}

\begin{lemma}\label{val} Let $S$ be a non-empty set, $\mathscr{F}$ an ultrafilter on $S$ 
and $\Bbbk$ a field. Set $\tilde{\Bbbk}:=\Bbbk^S/\mathscr{F}$. Then 
we have
\begin{enumerate}
\item $\Bbbk$ is algebraically closed in $\tilde{\Bbbk}$. 
\item Suppose that 
$f:S\to \Bbbk$ is a function that is not constant on any subset 
of $S$ contained in $\mathscr{F}$. Set $x:=f +\mathscr{F}$. 
Then there is a valuation 
$v:\tilde{\Bbbk}\to \mathbb{R}\cup \{\infty\}$ such that $v(x)=1$ 
and $v(a)=0$ for all $a\in \Bbbk^*$.
\end{enumerate}
\end{lemma}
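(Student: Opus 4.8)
The plan is to deduce (b) from (a), while (a) is a direct application of Lemma~\ref{union}. For (a), take $\alpha=[(\alpha_s)_{s\in S}]\in\tilde{\Bbbk}$ algebraic over $\Bbbk$ and choose $0\neq g\in\Bbbk[T]$ with $g(\alpha)=0$. Since $g(\alpha)=[(g(\alpha_s))_{s\in S}]$, the set $A:=\{s\in S:g(\alpha_s)=0\}$ lies in $\mathscr{F}$; in particular $A\neq\emptyset$, so $g$ has a root in $\Bbbk$. Listing the roots of $g$ in $\Bbbk$ as $r_1,\dots,r_n$ we obtain a disjoint decomposition $A=\dot{\bigcup}_{i=1}^n\{s\in S:\alpha_s=r_i\}$, and Lemma~\ref{union} yields an index $i$ with $\{s:\alpha_s=r_i\}\in\mathscr{F}$, whence $\alpha=[(r_i)_{s\in S}]=r_i\in\Bbbk$. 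Thus $\Bbbk$ is relatively algebraically closed in $\tilde{\Bbbk}$.

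For (b), the first observation is that $x=f+\mathscr{F}$ is transcendental over $\Bbbk$: otherwise, by (a), $x=c$ for some $c\in\Bbbk$, i.e.\ $\{s\in S:f(s)=c\}\in\mathscr{F}$, contradicting that $f$ is constant on no member of $\mathscr{F}$. Hence $\Bbbk[x]\subseteq\tilde{\Bbbk}$ is a polynomial ring, its localisation $R_0:=\Bbbk[x]_{(x)}$ is a discrete valuation ring with fraction field $\Bbbk(x)$, residue field $\Bbbk$ and uniformiser $x$, and the associated valuation $v_0$ on $\Bbbk(x)$ has value group $\mathbb{Z}$, $v_0(x)=1$ and $v_0(a)=0$ for all $a\in\Bbbk^*$. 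It therefore suffices to extend $v_0$ from the subfield $\Bbbk(x)$ to all of $\tilde{\Bbbk}$ without increasing its rank, i.e.\ so that the value group stays embeddable as an ordered abelian group in $(\mathbb{R},+)$; the resulting $v$ then satisfies $v(x)=v_0(x)=1$ and $v|_{\Bbbk^*}=0$ as required, and no rescaling is needed.

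To carry out this extension I would apply Zorn's Lemma to the set of pairs $(K,w)$ with $\Bbbk(x)\subseteq K\subseteq\tilde{\Bbbk}$ and $w$ a valuation on $K$ extending $v_0$ whose value group embeds as an ordered group in $\mathbb{R}$, ordered by restriction. This set contains $(\Bbbk(x),v_0)$, and it is closed under unions of chains, since the value group of a union of a chain is a directed union of Archimedean ordered groups, hence Archimedean, hence (Hölder's theorem) embeds in $\mathbb{R}$. Let $(K,w)$ be maximal; I claim $K=\tilde{\Bbbk}$. If not, pick $z\in\tilde{\Bbbk}\setminus K$. If $z$ is algebraic over $K$, then $w$ extends to $K(z)$ and the value group only changes by a torsion quotient, so it still embeds in $\mathbb{R}$. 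If $z$ is transcendental over $K$, extend $w$ to $K(z)$ by the Gauss valuation, putting $w(z):=0$ with $z$ reducing to an element transcendental over the residue field of $w$; this leaves the value group unchanged. In either case $(K,w)$ was not maximal, a contradiction, so $K=\tilde{\Bbbk}$ and $v:=w$ is the desired valuation.

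The step I expect to be the real content is this controlled extension of $v_0$: an arbitrary extension produced by Chevalley's extension theorem could a priori have a non-Archimedean, higher-rank value group, and the remedy is to build the extension by transfinite steps, taking the Gauss valuation at every transcendental step so that $w(z)$ is never pushed into a new Archimedean class, together with the remark that algebraic steps alter the value group only up to torsion. The auxiliary inputs — that the Gauss valuation is a valuation on $K(z)$ restricting to $w$, that valuations extend along algebraic extensions, and that directed unions of Archimedean ordered groups are Archimedean — are standard valuation theory and I would only cite them.
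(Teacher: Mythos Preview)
Your proof of (a) is essentially identical to the paper's. Your proof of (b) is correct, but the paper reaches the same conclusion by a shorter route that avoids the Zorn argument over intermediate valued fields. Instead of extending $v_0$ from $\Bbbk(x)$ step by step, the paper first chooses a transcendence basis $\mathscr{B}$ of $\tilde{\Bbbk}$ over $\Bbbk(x)$, sets $\mathbb{K}:=\Bbbk(\mathscr{B})$, and observes that $\{x\}$ is then a transcendence basis of $\tilde{\Bbbk}$ over $\mathbb{K}$. One now defines the $x$-adic valuation on $\mathbb{K}(x)$ (trivial on $\mathbb{K}$, value $1$ at $x$, value group $\mathbb{Z}$) and extends it in a single step to the algebraic extension $\tilde{\Bbbk}/\mathbb{K}(x)$; the value group lands in $\mathbb{Q}\subseteq\mathbb{R}$ automatically. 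In effect, the paper performs all of your Gauss-extension steps at once by absorbing every transcendental except $x$ into the new ground field $\mathbb{K}$, so that only one algebraic extension remains. Your approach is more hands-on and makes explicit why the rank stays one at each stage, which is pedagogically useful; the paper's approach is quicker and leans on nothing beyond the existence of transcendence bases and the standard extension of valuations to algebraic extensions.
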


\begin{proof}
\begin{enumerate}
\item Let $p\in \Bbbk[t]$ be a non-constant polynomial and 
$\alpha=[(\alpha_s)_{s\in S}]$ with $p(\alpha)=0$. 
Then $S':=\{ s\in S\mid p(\alpha_s) =0\} \in \mathscr{F}$. 
Let $\alpha_1,\ldots,\alpha_n$ be the roots of $p$ in 
$\Bbbk$. Set $S_i:=\{s\in S\mid \alpha_s=\alpha_i\}$ for $i=1,\ldots, n$.
Since $S'=\dot{\bigcup}_{i=1}^n S_i$, by \ref{union} 
there is exactly one $1\leq i \leq n$ 
with $S_i\in \mathscr{F}$. Thus $\alpha=\alpha_i\in 
\Bbbk$. Hence the claim follows.
\item By assumption $x\not\in \Bbbk$, thus $x$ is transcendental over 
$\Bbbk$ by (a). Let $\mathscr{B}$ be a transcendental base of 
$\tilde{\Bbbk}$ over $\Bbbk(x)$. Then $\mathscr{B}\cup \{x\}$ 
is a transcendental base of $\tilde{\Bbbk}$ over $\Bbbk$ and 
$\{x\}$ is a transcendental base of $\tilde{\Bbbk}$ over 
$\mathbb{K}:=\Bbbk(\mathscr{B})$. Thus there is a valuation 
$v$ of $\mathbb{K}(x)$ that is trivial on $\mathbb{K}\supseteq \Bbbk$ with 
$v(x) =1$. Since $\tilde{\Bbbk}$ is algebraic over $\mathbb{K}(x)$, 
we can extend $v$ to a valuation on $\tilde{\Bbbk}$ by \cite[XII 4.4]{L}. 
Thus the claim follows.
\end{enumerate}
\end{proof}

\begin{remark}\rm This valuation is not necessarily discrete. For example, 
if $S=\mathbb{N}$ and $f(n)=a^{n!}$ for some $a\in K^*$ of infinite order, 
then $x$ is a $n$-th power for all $n\in \mathbb{N}$, thus the value group 
contains $\mathbb{Q}$. The crucial thing is that the value group is 
Archimedean.\end{remark}

\begin{lemma}
Let $S$ be a non-empty set and $\mathscr{F}$ an ultrafilter on $S$. 
\begin{enumerate}
\item If $R_s$ is a ring and $M_s$ is an $R_s$-module for all $s\in S$, then $\prod_{s\in S} M_s/\mathscr{F}$ is a $\prod_{s\in S} R_s/\mathscr{F}$-module via 
$[(x_s)_{s\in S}] \cdot [(m_s)_{s\in S}] =[ (x_s\cdot m_s)_{s\in S}] $ for all $(x_s)_{s\in S}\in \prod_{s\in S} R_s$ and all $(m_s)_{s\in S}\in \prod_{s\in S} M_s$. 
\item Suppose that  $\Bbbk$ is a field, $V$ a $\Bbbk$-vector space of finite dimension and $B$ a $\Bbbk$-basis of $V$. Set $\tilde{\Bbbk}:=\Bbbk^S/\mathscr{F}$ and 
$\tilde{V}:=V^S/\mathscr{F}$. Then $B$ is $\tilde{\Bbbk}$-basis of $\tilde{V}$.
\end{enumerate}
\end{lemma}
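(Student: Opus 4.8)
The plan is to prove (a) and (b) separately, obtaining (b) from (a) together with the hypothesis $\dim_\Bbbk V<\infty$. For (a) I would start from the observation that $\prod_{s\in S}M_s$ is a module over the ring $\prod_{s\in S}R_s$ under the componentwise action, and then check two compatibility statements about the subobjects that define the ultraproducts. Write $I^R\le \prod_{s\in S}R_s$ for the ideal $\{(x_s)_s\mid\{s\mid x_s=0\}\in\mathscr{F}\}$ and $I^M\le\prod_{s\in S}M_s$ for the analogously defined subgroup. First, $I^M$ is a $\prod_{s\in S}R_s$-submodule of $\prod_{s\in S}M_s$: if $\{s\mid m_s=0\}\in\mathscr{F}$ then for any $(x_s)_s$ the set $\{s\mid x_sm_s=0\}$ contains it, hence lies in $\mathscr{F}$. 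Second, $I^R\cdot\prod_{s\in S}M_s\subseteq I^M$: if $\{s\mid x_s=0\}\in\mathscr{F}$ then $\{s\mid x_sm_s=0\}$ again contains it. These two facts say precisely that the componentwise action descends to a well-defined action of $(\prod_{s\in S}R_s)/I^R$ on $(\prod_{s\in S}M_s)/I^M$, given by the stated formula, and the module axioms pass to the quotient because they already hold componentwise.

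For (b), since $B$ is finite we may write $V=\bigoplus_{b\in B}\Bbbk b$. To see that $B$ spans $\tilde V$ over $\tilde\Bbbk$: given $[(v_s)_s]\in\tilde V$, write $v_s=\sum_{b\in B}\lambda_{s,b}\,b$ with $\lambda_{s,b}\in\Bbbk$, set $\mu_b:=[(\lambda_{s,b})_s]\in\tilde\Bbbk$, and use part (a) together with the fact that the sum over $B$ is finite to conclude $\sum_{b\in B}\mu_b\cdot b=[(\sum_{b\in B}\lambda_{s,b}\,b)_s]=[(v_s)_s]$. For linear independence, suppose $\sum_{b\in B}\mu_b\cdot b=0$ in $\tilde V$ with $\mu_b=[(\lambda_{s,b})_s]$; then $A:=\{s\mid\sum_{b\in B}\lambda_{s,b}\,b=0\}\in\mathscr{F}$, and since $B$ is a $\Bbbk$-basis of $V$ this forces $\lambda_{s,b}=0$ for every $b\in B$ and every $s\in A$, so $\{s\mid\lambda_{s,b}=0\}\supseteq A$ lies in $\mathscr{F}$ for each $b$, i.e. $\mu_b=0$.

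There is no genuine obstacle in either part; the one point that should not be glossed over is the role of $\dim_\Bbbk V<\infty$. Linear independence of $B$ over $\tilde\Bbbk$ holds without any finiteness assumption, but the spanning argument uses finiteness essentially: for an infinite basis the ``support'' $\{b\in B\mid\lambda_{s,b}\ne 0\}$ may vary unboundedly with $s$, so a generic element of $V^S/\mathscr{F}$ need not be a finite $\tilde\Bbbk$-combination of elements of $B$, and indeed $\dim_{\tilde\Bbbk}\tilde V$ can then strictly exceed $\dim_\Bbbk V$. Since the later applications invoke this lemma only for the finite-dimensional $U$ of condition (i) of the main theorem, this is exactly the generality that is needed.
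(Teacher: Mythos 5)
Your proof is correct and follows the same route as the paper, which simply declares (a) obvious and notes for (b) that finiteness of $B$ gives spanning while linear independence is easy; you have just supplied the routine verifications (well-definedness of the quotient action, and the spanning/independence checks via the ultrafilter). Your closing remark that finiteness is needed only for the spanning step, and can fail for infinite $B$, is accurate and consistent with how the lemma is used later.
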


\begin{proof}
\begin{enumerate}
\item This is obvious.
\item Since $B$ is finite, $V^S$ is generated by $B$ as a $\Bbbk^S$-module. 
One easily sees that $B$ is $\tilde{\Bbbk}$-linearly independent. Thus 
the claim follows.
\end{enumerate}
\end{proof}

\newpage
\noindent
Now suppose that $S$ is a non-empty set, $\mathscr{F}$ an ultrafilter on $S$ and $\mathbb{M}(U_s,\tau_s)$ a Moufang set for all $s\in S$. 
We suppose that $\tau_s =\mu_{a_s}$ for some $a_s\in U_s^\#$ for all 
$s\in S$. 
We set $U:=\prod_{s\in S} U_s/\mathscr{F}$. 
We define the map $\tau $ on $U^\#$ as follows: If $a\in U^\#$, choose a representative $(a_s)_{s\in S}$ of $a$ with $a_s\ne 0$ for all $s\in S$ and set $a\tau:=[(a_s\tau_s))_{s\in S}]$. 
This is well-defined. We get

\begin{theorem} For all $x=[(x_s)_{s\in S}], a=[(a_s)_{s\in S}]\in U$ with $a_s\ne 0$ for all $s\in S$ we have 
$xh_a =[(x_s h_{a_s})_{s\in S}]$. In particular $\mathbb{M}(U,\tau)$ is a Moufang set. Moreover, we have
\begin{enumerate}
\item The Moufang set $\mathbb{M}(U,\tau)$ is proper if and only if $\{ s\in S\mid \mathbb(U_s,\tau_s)$ is proper$\}\in \mathscr{F}$.
\item The Moufang set $\mathbb{M}(U,\tau)$ is special if and only if $\{ s\in S\mid \mathbb(U_s,\tau_s)$ is special$\}\in \mathscr{F}$.
\item If $\mathbb{M}(U_s,\tau_s)$ is special for all $s\in S$ and if 
$T_s \in \mathrm{End}(U_s)$ is in the centroid of $\mathbb{M}(U_s,\tau_s)$ for all $s\in S$, then 
$[ (T_s)_{s\in S}]$ is in the centroid of $\mathbb{M}(U,\tau)$. 
\end{enumerate}
\end{theorem}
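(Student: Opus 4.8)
The plan is to show that every map occurring in the construction of $\mathbb{M}(U,\tau)$ acts on $X=U\,\dot{\cup}\,\{\infty\}$ componentwise with respect to $\mathscr{F}$, and then to read off all four assertions from this. First I would extend $\tau$ to all of $X$ by setting $0\tau:=\infty$ and $\infty\tau:=0$. Identifying $X$ with the set-theoretic ultraproduct $\prod_{s\in S}\big(U_s\,\dot{\cup}\,\{\infty\}\big)/\mathscr{F}$ — which is legitimate because Lemma \ref{union}, applied to the partition of $S$ into the indices where a given class equals $\infty$ and those where it lies in $U_s$, shows every class is either $\infty$ or has a representative in $\prod_{s\in S}U_s$ — the extended $\tau$ is just $[(\tau_s)_{s\in S}]$, hence lies in $\mathrm{Sym}X$ and interchanges $0$ and $\infty$. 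Since $\alpha_a=[(\alpha_{a_s})_{s\in S}]$ for any representative of $a\in U$, and conjugation, composition and the group operation of $U$ are all carried out componentwise, the formula $\mu_a=\alpha_a\alpha_{-a\tau^{-1}}^{\tau}\alpha_{-(-a\tau^{-1})\tau}$ gives $\mu_a=[(\mu_{a_s})_{s\in S}]$ whenever $a\in U^\#$ is represented by a tuple with all entries nonzero, and therefore $h_a=\tau\mu_a=[(h_{a_s})_{s\in S}]$ — the first assertion. Because each $h_{a_s}$ lies in $\mathrm{Aut}(U_s)$ by Theorem \ref{MS}, the componentwise map $h_a$ is additive and bijective, with inverse $[(h_{a_s}^{-1})_{s\in S}]$; so $h_a\in\mathrm{Aut}(U)$, and Theorem \ref{MS} applied to $\mathbb{M}(U,\tau)$ shows it is a Moufang set.

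For (a) I would use that $\mathbb{M}(U,\tau)$ is proper precisely when $\mu_a\mu_b\neq\mathrm{id}$ for some $a,b\in U^\#$ — since $H=\langle\mu_a\mu_b\mid a,b\in U^\#\rangle$ — and likewise for each component. If the set $P$ of proper components lies in $\mathscr{F}$, choose for each $s\in P$ elements $a_s,b_s,x_s\in U_s^\#$ with $x_s\mu_{a_s}\mu_{b_s}\neq x_s$ (possible since $\mu_{a_s}\mu_{b_s}$ fixes $0$ and $\infty$), and arbitrary such data elsewhere; then $a=[(a_s)_{s\in S}],\,b=[(b_s)_{s\in S}]\in U^\#$ and $x=[(x_s)_{s\in S}]$ satisfy $x\mu_a\mu_b\neq x$, because the index set on which they differ contains $P\in\mathscr{F}$. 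Conversely, if $x\mu_a\mu_b\neq x$ for some $x$ — necessarily lying in $U^\#$, as $\mu_a\mu_b$ fixes $0,\infty$ — then the ultrafilter dichotomy forces $\{s:x_s\mu_{a_s}\mu_{b_s}\neq x_s\}\in\mathscr{F}$, and each such component is proper. Part (b) is the same argument applied to the identity $(-a)\tau=-(a\tau)$: a failure in $U$ is witnessed at some $a\in U^\#$ and pushes the set of special components out of $\mathscr{F}$ (hence its complement in), while "special on an $\mathscr{F}$-set" transfers back by choosing witnesses. The only non-trivial input in either direction is the ultrafilter dichotomy $A\in\mathscr{F}$ or $S\setminus A\in\mathscr{F}$.

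For (c), by (b) the Moufang set $\mathbb{M}(U,\tau)$ is special and $U$ is abelian (each $U_s$ being so, as the centroid $\mathscr{C}_s$ is assumed defined), so $\mathscr{C}$ is defined; and $T:=[(T_s)_{s\in S}]$ is a well-defined additive endomorphism of $U$. That $T$ centralises $H=\langle\mu_a\mu_b\mid a,b\in U^\#\rangle$ is componentwise: $\mu_a\mu_b=[(\mu_{a_s}\mu_{b_s})_{s\in S}]$ by the first part, and each $T_s$ commutes with $\mu_{a_s}\mu_{b_s}\in H_s$ since $T_s\in\mathrm{End}_{H_s}(U_s)$. For the identity $h_{aT}=T^2h_a$ I would extend $a\mapsto h_a$ to the assignment $[(c_s)_{s\in S}]\mapsto[(h_{c_s})_{s\in S}]$, using the convention $h_0=0$ already in force; this is well-defined on all of $U$ and coincides with $\mathscr{H}$. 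Then for $a\in U^\#$ with all components nonzero, $aT=[(a_sT_s)_{s\in S}]$, so $h_{aT}=[(h_{a_sT_s})_{s\in S}]=[(T_s^2h_{a_s})_{s\in S}]=T^2h_a$, the middle equality holding because $T_s\in\mathscr{C}_s$ gives $h_{cT_s}=T_s^2h_c$ for every $c\in U_s$ — in particular for $c=a_s$, even at the components where $a_sT_s=0$. Hence $T\in\mathscr{C}$.

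The main obstacle is not conceptual but bookkeeping: making the identification $X\cong\prod_{s\in S}(U_s\,\dot{\cup}\,\{\infty\})/\mathscr{F}$ precise, so that $\tau$ and the $\mu$-maps genuinely act componentwise on all of $X$ and not only on $U$; and, in (c), handling the components where $a_sT_s$ vanishes, which the convention $h_0=0$ disposes of cleanly. No Moufang-set theory beyond Theorem \ref{MS} and the definitions is required; the assertions (a), (b), (c) are really instances of the {\L}o{\'s} ultraproduct theorem, but to keep this section self-contained I would check them directly from Lemma \ref{union} as above.
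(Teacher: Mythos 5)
Your argument is correct, and it reaches the conclusion by a genuinely different (more conceptual) route than the paper. The paper never identifies $X$ with an ultraproduct of pointed sets; instead it fixes representatives, partitions $S$ into $S_1=\{s\mid x_s=0\}$, $S_2=\{s\mid x_s=a_s\tau_s^{-1}\}$, $S_3=\{s\mid x_s\ne 0,a_s\tau_s^{-1}\}$, applies Lemma \ref{union}, and verifies the formula $xh_a=[(x_sh_{a_s})_{s\in S}]$ separately in each case by unwinding the definition of the Hua map, the whole point being the explicit treatment of the exceptional values where the computation passes through $0$ or $\infty$; parts (a)--(c) are then done by the same witness-transfer arguments you give (with Hua maps rather than $\mu_a\mu_b$ as witnesses of properness in (a), which is equivalent since $h_a=\tau\mu_a\in H$ and $H=\langle \mu_a\mu_b\rangle$). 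Your version instead makes the identification $X\cong\prod_{s\in S}(U_s\,\dot\cup\,\{\infty\})/\mathscr{F}$ precise, checks that $\tau$, the $\alpha_a$, hence the $\mu_a$ and $h_a$, all act componentwise, and then reads everything off at once -- essentially \L{}o\'s's theorem made explicit, as you say. What your approach buys is the elimination of the case analysis (the exceptional points are absorbed into the statement that $[(\tau_s)_{s\in S}]$ is a well-defined permutation of the pointed ultraproduct) and a uniform treatment of (a)--(c); what the paper's approach buys is that it stays entirely inside the group ultraproduct $U=\prod U_s/\mathscr{F}$ and needs no auxiliary identification. The only delicate points in your write-up -- well-definedness of the componentwise extension of $a\mapsto h_a$ with the convention $h_0=0$, and the components where $a_sT_s=0$ in (c) -- are exactly the ones you flag, and your handling of them is sound.
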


\begin{proof}
Let $(x_s)_{s\in S} \in \prod_{s\in S} U_s$ and let $(a_s)_{s\in S}\in 
\prod_{s\in S} U_s^\#$. We have 
$S =S_1 \dot{\cup} S_2 \dot{\cup} S_3$ with 
$S_1:=\{ s\in S\mid x_s=0\}$, $S_2:=\{s\in S\mid x_s=a_s\tau_s^{-1}\}$ and 
$S_3:=\{s\in S\mid x_s\ne 0,a_s\tau_s^{-1}\}$. Then exactly one of 
the sets $S_1$, $S_2$ and $S_3$ are contained in $\mathscr{F}$ by \ref{union}.

\medskip
\noindent
Suppose that $S_1\in \mathscr{F}$. Then by definition $xh_a=0$ but 
also $\{ s\in S\mid x_sh_{a_s}=0\} =S_1\in \mathscr{F}$, thus the 
claim follows in this case. If $S_2\in \mathscr{F}$, then we have
$\{ s\in S\mid x\tau-a=0\}\in \mathscr{F}$, thus
$(x\tau-a)\tau^{-1}=\infty=(x\tau-a)\tau^{-1}-a\tau^{-1}$, 
hence $\big( (x\tau-a)\tau^{-1}-a\tau^{-1}\big)\tau =0$ and so 
$xh_a = \big( (x\tau-a)\tau^{-1}-a\tau^{-1}\big)\tau -(-a\tau^{-1})\tau 
=-(-a\tau^{-1})\tau $. However, for $s\in S_2$ we have 
$x_sh_{a_s} = a_s\tau^{-1}h_{a_s}= -(-a_s\tau_s^{-1})\tau_s$, thus 
the claim follows.

\medskip\noindent
Now for $s\in S_3$, we have $x_s,x_s\tau_s -a_s, 
(x_s\tau_s-a_s)\tau_s^{-1} -a\tau_s^{-1} \in U_s^\#$. Thus if $S_3\in 
\mathscr{F}$, then 
$(xh_a)_s =x_sh_{a_s}$ for all $s\in S_3$, hence the claim follows.

\medskip\noindent
Since every map $h_{a_s}$ is an automorphism of $U_s$ for all $s\in S$, 
it follows that $h_a$ is an automorphism of $U$. Thus $\mathbb{M}(U,\tau)$ 
is a Moufang set. 

\begin{enumerate}
\item For every $t\in \{ s\in S\mid \mathbb{M}(U_s,\tau_s)$ is proper$\}$ 
there are $x_t,a_t\in U_s^\#$ with $x_th_{a_t}\ne x_t$. 
Choose $x_s$ and $a_s$ arbitrarily for the other $s\in S$ and set 
$x:=[(x_s)_{s\in S}]$ and $a:=[(a_s)_{s\in S}]$. 
Then if $\{ s\in S\mid \mathbb{M}(U_s,\tau_s)$ is proper$\}\in \mathscr{F}$, 
we have $xh_a\ne x$, thus $\mathbb{M}(U,\tau)$ is proper. 

\medskip\noindent
Conversely, for $t\in \{s\in S\mid \mathbb{M}(U_s,\tau_s)$ is improper$\}$, 
we have $x_t h_{a_t}=x_t$ for all $x_t\in U$ and all $a_t\in U_t^\#$. 
Thus if this set is in $S$, the Moufang set $\mathbb{M}(U,\tau)$ is 
improper.
\item For all $t\in \{s\in S\mid \mathbb{M}(U_s,\tau_s)$ is special$\}$ 
we have $(-x_t)\tau_t =-x_t\tau_t$ for all $x_t\in U_t^\#$. 
Thus the claim follows easily.
\item This follows also by an easy calculation.
\end{enumerate}
\end{proof}
\newpage
\begin{remark}\rm 
It is conjectured that the root groups of a proper Moufang set are nilpotent, 
see \cite{DST}. If this conjecture is true, then there is a constant 
$c$ such that the root groups are nilpotent of class at most $c$. 
For if for every natural number $n$ there is a proper Moufang set 
$\mathbb{M}(U_n,\tau_n)$ such that $U_n$ is of class at least $n$ and 
if $\mathscr{F}$ is a free ultrafilter on $\mathbb{N}$, then 
$\prod_{n\in \mathbb{N}} \mathbb{M}(U_n,\tau_n)/\mathscr{F}$ is a proper 
Moufang set whose root groups are isomorphic to $\prod_{n\in \mathbb{N}} U_n/
\mathscr{F}$ and therefore not nilpotent. 
\end{remark}

\section{Special Moufang sets of finite dimension}

Before proving our main result we need some auxiliary results in algebra.

\begin{lemma}\label{Vandermonde} Let $\tilde{\Bbbk}$ be a field with valuation $v$, $\mathcal{O}$ its valuation ring, $\Bbbk$ a subfield of $\mathcal{O}$, 
$V$ a vector space over $\tilde{\Bbbk}$ and $\Lambda$ an $\mathcal{O}$-sublattice of $V$. Suppose that $v_1,\ldots,v_n  \in V$ and $\lambda_1,\ldots,\lambda_n\in \Bbbk$ are pairwise distinct
such that we have \\ $\sum_{j=1}^{n} \lambda_i^{j-1}\cdot v_j\in \Lambda$ for $1\leq i\leq n$. Then $v_1,\ldots,v_n\in \Lambda$. 
\end{lemma}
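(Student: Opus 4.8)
The plan is to use classical Vandermonde inversion, taking care that all the scalars that occur stay inside the valuation ring. Set $w_i := \sum_{j=1}^{n} \lambda_i^{j-1}\cdot v_j$ for $1\le i\le n$; by hypothesis each $w_i\in\Lambda$. Let $M:=(\lambda_i^{j-1})_{1\le i,j\le n}$ be the associated Vandermonde matrix over $\Bbbk$, so that in matrix form $(w_1,\dots,w_n)=M\,(v_1,\dots,v_n)$ (acting on the column of vectors $v_j\in V$). The whole proof will consist of inverting $M$ and checking that the inverse does no harm to the lattice $\Lambda$.

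First I would observe that $\det M=\prod_{1\le i<k\le n}(\lambda_k-\lambda_i)$ is a \emph{nonzero} element of $\Bbbk$, since the $\lambda_i$ are pairwise distinct and $\Bbbk$ is a field; hence $M\in\mathrm{GL}_n(\Bbbk)$. The key point is that $M^{-1}$, being the inverse of an invertible matrix over the field $\Bbbk$, again has all of its entries in $\Bbbk$. By hypothesis $\Bbbk\subseteq\mathcal{O}$, so all entries of $M^{-1}$ lie in $\mathcal{O}$. Writing $M^{-1}=(c_{ji})$ with $c_{ji}\in\mathcal{O}$, the relation $M^{-1}M=\mathrm{id}$ reads $\sum_{i=1}^{n}c_{ji}\,\lambda_i^{k-1}=\delta_{jk}$, and therefore $\sum_{i=1}^{n}c_{ji}\,w_i=\sum_{k=1}^{n}\bigl(\sum_{i=1}^{n}c_{ji}\,\lambda_i^{k-1}\bigr)v_k=v_j$ for each $j$. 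Since $\Lambda$ is an $\mathcal{O}$-submodule of $V$, each $w_i$ lies in $\Lambda$ and each $c_{ji}$ lies in $\mathcal{O}$, this exhibits $v_j$ as an $\mathcal{O}$-linear combination of elements of $\Lambda$, so $v_j\in\Lambda$; as $j$ was arbitrary, the lemma follows.

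There is essentially no obstacle here; the only thing one must not overlook is that the entries of the inverse Vandermonde matrix are genuinely scalars from $\Bbbk$ (equivalently, Lagrange-interpolation coefficients built from the $\lambda_i$), and that the hypothesis $\Bbbk\subseteq\mathcal{O}$ is precisely what makes those scalars act on the $\mathcal{O}$-lattice $\Lambda$. Note that the valuation $v$ itself is never used in the argument: it serves only to provide the ring $\mathcal{O}$ over which $\Lambda$ is a module, and this lemma will presumably be applied with $\tilde{\Bbbk}$ the ultrapower field and $v$ the valuation produced in Lemma~\ref{val}. A matrix-free rephrasing would instead invoke Lagrange interpolation directly, reading the $v_j$ off as the coefficients of $\sum_i w_i L_i(t)$ with $L_i\in\Bbbk[t]$ the Lagrange basis polynomials at the nodes $\lambda_1,\dots,\lambda_n$, but this amounts to exactly the same computation.
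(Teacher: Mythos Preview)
Your argument is correct and is essentially identical to the paper's own proof: both invert the Vandermonde matrix over $\Bbbk$, observe that the inverse has entries in $\Bbbk\subseteq\mathcal{O}$, and conclude that each $v_j$ is an $\mathcal{O}$-linear combination of the $w_i\in\Lambda$. Your write-up is slightly more explicit (spelling out the identity $\sum_i c_{ji}\lambda_i^{k-1}=\delta_{jk}$) and adds helpful commentary, but there is no substantive difference in method.
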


\begin{proof} The matrix $A:=(\lambda_i^{j-1})_{i,j=1,\ldots,n}$ is a Vandermonde matrix with entries in $\Bbbk$. Since the elements $\lambda_i$ are all distinct, we have $\det(A):=\prod_{i<j} (\lambda_j-\lambda_i)\in \Bbbk^* \subseteq \mathcal{O}^*$. Thus if $w_i:=\sum_{j=1}^{n} \lambda_i^{j-1} v_i$ and 
$A^{-1}=(b_{ij})_{i,j=1,\ldots,n} \in \mathrm{GL}_n(\mathcal{O})$, we have $v_i=\sum_{j=1}^n b_{ij} w_j\in \Lambda$. 
\end{proof} 

\begin{notation}\label{not}\rm
Let $\tilde{\Bbbk}$ be a field with valuation $v:
\tilde{\Bbbk}\to \mathbb{R}\cup \{\infty\}$, $\mathcal{O}$ its valuation ring, 
$V$ a $\tilde{\Bbbk}$-vector space and $B$ 
a basis of $V$. For $v=\sum_{b\in B} \lambda_b\cdot b\in V$ set $\omega(v):=\min\{ v(\lambda_b)\mid b\in B \}$. 
Then the map $\omega$ satisfies
\begin{itemize}
\item[\rm (i)] $\omega(v)=\infty \iff v=0$,
\item[\rm (ii)] $\omega(v+w) \geq \min\{ \omega(v), \omega(w)\}$,
\item[\rm (iii)] $\omega(\lambda\cdot v) =\omega(v)+v(\lambda)$
\end{itemize}
for all $v,w\in V$ and all $\lambda\in \tilde{\Bbbk}$. We will call 
$\omega$ the {\it minimum norm} with respect to $v$ and $B$. 
For all $c>0$ we get an ultrametric $d$ on $\tilde{U}$ by 
$d(v,w)=c^{\omega(v-w)}$ for $v,w\in \tilde{V}$.

\smallskip
\noindent
Moreover, we set 
$\Lambda_{r}:= 
\{ v\in \tilde{V}\mid \omega(v)\geq r\}$ for all $r\in\mathbb{R}$. 
Then $\Lambda_{r}$ is an $\mathcal{O}$-lattice.
\end{notation}

\begin{lemma}\label{isometry} Let $\tilde{\Bbbk}$, $V$ and $\mathcal{O}$ as in 
\ref{not}. 
Suppose that $\dim V=n<\infty$ and $B=(b_1,\ldots,b_n)$ is a $\Bbbk$-basis of $V$. 
Let $f:V\to V$ be a $\tilde{\Bbbk}$-linear map and $A:=(a_{ij})_{i,j=1,\ldots,n}$ 
its transformation matrix with respect to $B$. Then we have
\begin{enumerate}
\item The map $f$ is Lipschitz-continuous with respect to $\omega$ with Lipschitz-constant $L:=\min\{ v(a_{ij})\mid 1\leq i,j\leq n\}$, i.e. 
$\omega\big( f(v)\big) \geq L + \omega(v)$ for all $v\in V$.
\item Suppose that $\Bbbk$ is a subfield of $\mathcal{O}$. 
If $f$ is invertible and $a_{ij}\in \Bbbk$ for all $1\leq i,j\leq n$, then $f$ is an isometry with respect to $\omega$.
\end{enumerate}
\end{lemma}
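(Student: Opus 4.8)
The plan is to prove both parts by translating the valuation estimate on matrix entries into an estimate on the minimum norm $\omega$. For part (a), write an arbitrary $v=\sum_{j=1}^n \lambda_j b_j$ with $\lambda_j\in\tilde{\Bbbk}$, so that $\omega(v)=\min_j v(\lambda_j)$. Applying $f$ gives $f(v)=\sum_{i=1}^n\big(\sum_{j=1}^n \lambda_j a_{ji}\big) b_i$ (with the appropriate convention for which index of $A$ is the row and which is the column; I will fix the convention so that $b_j f=\sum_i a_{ji} b_i$, and adjust accordingly). The $i$-th coordinate of $f(v)$ is $\sum_j \lambda_j a_{ji}$, and by the ultrametric inequality and property (iii) of $\omega$ extended to scalars, $v\big(\sum_j \lambda_j a_{ji}\big)\ge \min_j\big(v(\lambda_j)+v(a_{ji})\big)\ge L+\omega(v)$, where $L=\min_{i,j} v(a_{ij})$. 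Taking the minimum over $i$ yields $\omega\big(f(v)\big)\ge L+\omega(v)$, which is exactly Lipschitz-continuity with constant $L$.

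For part (b), assume in addition that $\Bbbk\subseteq\mathcal{O}$ and that $f$ is invertible with all $a_{ij}\in\Bbbk$. Then every $a_{ij}$ satisfies $v(a_{ij})\ge 0$, so the constant $L$ from part (a) is $\ge 0$, giving $\omega\big(f(v)\big)\ge\omega(v)$ for all $v$. To get the reverse inequality, apply the same reasoning to $f^{-1}$: since $A\in\mathrm{GL}_n(\Bbbk)$ and $\det A\in\Bbbk^*\subseteq\mathcal{O}^*$ (as in the proof of Lemma \ref{Vandermonde}), the inverse matrix $A^{-1}$ again has all entries in $\Bbbk\subseteq\mathcal{O}$, so $f^{-1}$ is also norm-nonincreasing: $\omega\big(f^{-1}(w)\big)\ge\omega(w)$ for all $w$. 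Substituting $w=f(v)$ gives $\omega(v)=\omega\big(f^{-1}(f(v))\big)\ge\omega\big(f(v)\big)$. Combining the two inequalities, $\omega\big(f(v)\big)=\omega(v)$ for all $v\in V$, i.e. $f$ is an isometry with respect to $\omega$.

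The only genuinely delicate point is bookkeeping: one must be careful that the transformation matrix $A$ is interpreted consistently (rows versus columns, left versus right action) so that the coordinatewise estimate is applied to the correct linear combination, and one must invoke that a scalar $\lambda\in\tilde{\Bbbk}$ with $v(\lambda)\ge 0$ and a vector $w$ satisfy $\omega(\lambda w)\ge\omega(w)$, which is immediate from property (iii). Beyond that, everything is a direct application of the ultrametric inequality together with $v(a_{ij})\ge 0$, so there is no serious obstacle; the content is entirely in the observation that membership of the matrix entries in the valuation ring forces the associated operator to be $1$-Lipschitz, and membership in $\Bbbk^*$-units forces it to be bi-Lipschitz, hence an isometry.
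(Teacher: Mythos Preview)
Your proposal is correct and follows essentially the same route as the paper: expand $v$ in the basis, apply the ultrametric inequality coordinatewise to get part (a), and for part (b) use that entries in $\Bbbk\subseteq\mathcal{O}$ give $L\ge 0$, then apply the same bound to $f^{-1}$ and substitute $w=f(v)$. The only cosmetic difference is that you explicitly justify why $A^{-1}\in M_n(\Bbbk)$ via $\det A\in\Bbbk^*$, whereas the paper leaves this implicit (simply noting $L=0$ for any nonzero $\Bbbk$-matrix since $\Bbbk^*\subseteq\mathcal{O}^*$).
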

\begin{proof}
For $v=\sum_{i=1}^n \lambda_i \cdot b_i\in V$ we have 
\begin{eqnarray*}
\omega\big( f(v)\big) &=& \omega\left( f\big( \sum_{i=1}^n \lambda_i \cdot b_i\big) \right) = \omega\big( \sum_{i=1}^n\lambda_i \cdot \sum_{j=1}^n a_{ij}\cdot b_j\big) 
\\
&=& \omega \big( \sum_{j=1}^n \sum_{i=1}^n \lambda_i \cdot a_{ij} \cdot b_j\big) \\
&=& \min\left\{ v\big( \sum_{i=1}^n \lambda_i \cdot a_{ij}\big) \mid 1\leq j\leq n\right\} \\
&\geq & \min \{ ( v( \lambda_i \cdot a_{ij}) \mid 1\leq i,j\leq n\} \\
&=& \min\{ v(\lambda_i) +v(a_{ij})\mid 1\leq i,j\leq n\} \\
&\geq & \min \{ v(\lambda_i)\mid 1\leq i\leq n\} +\min\{ v(a_{ij}) \mid 1\leq i,j\leq n\} \\
&=& \omega(v) +L.
\end{eqnarray*}
This proves (a). If $a_{ij}\in \Bbbk$ for all $1\leq i,j\leq n$ and $f$ is not the zero map, we have $L=0$, hence $\omega\big( f(v)\big) \geq 
\omega(v)$ for all $v\in V$. 
If $f$ is invertible, we also have $\omega\big( f^{-1}(v)\big) \geq \omega(v)$ for all $v\in V$. If we replace $v$ by $f(v)$, we get 
$\omega(v) \geq \omega\big( f(v)\big)$. Thus the claim follows. \end{proof}

\noindent
From now on we assume that $\mathbb{M}(U,\tau)$ is a special Moufang set with 
$U$ abelian and $\tau=\mu_e$ for some fixed element $e\in U^\#$.
Moreover, $\Bbbk$ is a subfield of $\mathrm{End}_H(U)$, where $H$ is the Hua subgroup of $\mathbb{M}(U,\tau)$.
\begin{notation}\rm For $a\in U$ and $n\in \NN$ we define $a^n$ recursively by 
$a^0:=e$, $a^1:=a$ and $a^{n+2}:=a^nh_a$. Moreover, for $a\in U^\#$ we set $a^{-1}:=-a\tau$.
\end{notation}

\begin{lemma}\label{ff}
For all $a\in U$ and $n,m\in\NN$ we have:
\begin{enumerate}
\item If $2m\geq n$, then $a^n =a^{n-2m} h_a^m$.
\item $h_{a^n}=h_a^n$.
\item $(a^m)^n =a^{nm}$.
\item $a^n h_{e,a}=2a^{n+1}$.
\item $(\lambda a)^n =\lambda^n a^n$ for all $\lambda\in \mathscr{C}$.
\end{enumerate}
\end{lemma}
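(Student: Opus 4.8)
\noindent
The plan is to deduce all five identities from the recursion defining $a^n$ together with two facts that are already available in this setting: $h_e=\mathrm{id}$ (this is (QJ1)) and the fundamental identity $h_{bh_a}=h_ah_bh_a$ (this is (QJ3)), both of which hold by the Remark after 7.6.1 in \cite{DS2}. As a preliminary step I would observe that for $a\in U^\#$ the relation $a^{n+2}=a^nh_a$ allows one to extend $a^n$ to all $n\in\Z$ (this is legitimate because $h_a\in\mathrm{Aut}(U)$), and that the value $a^{-1}=a\,h_a^{-1}$ obtained in this way coincides with $-a\tau$ because of the standard relation $(a\tau)h_a=-a$ between the $\mu$-map and $\tau$ (see \cite{DW}, \cite{DS}); with this reading $a^{n-2m}$ in (a) is always defined. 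For $a=0$ one has $0^0=e$ and $0^n=0$ for $n\ge 1$, and all the identities are immediate, with the usual proviso that (a) is stated for $a\ne 0$.

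For (a) I would first establish by induction on $m$ the slightly stronger statement that $a^kh_a^m=a^{k+2m}$ for all $k$: the case $m=0$ is trivial, and the inductive step is $a^kh_a^{m+1}=(a^kh_a^m)h_a=a^{k+2m}h_a=a^{k+2(m+1)}$. Part (a) is then the case $k=n-2m$. For (b) I would induct on $n$: the cases $n=0$ (using $h_e=\mathrm{id}$) and $n=1$ are clear, and the step is
\[
h_{a^{n+2}}=h_{a^nh_a}=h_ah_{a^n}h_a=h_ah_a^nh_a=h_a^{n+2}
\]
by (QJ3) and the inductive hypothesis. For (c), fix $m$, set $b:=a^m$, and induct on $n$; the cases $n=0,1$ are clear, and
\[
b^{n+2}=b^nh_b=b^nh_{a^m}=b^nh_a^m=a^{nm}h_a^m=a^{nm+2m}=a^{(n+2)m}
\]
by (b), the inductive hypothesis, and (a). Part (e) is the same kind of induction: for $\lambda\in\mathscr{C}$ one has $h_{\lambda a}=\lambda^2h_a$ by the definition of the centroid, so
\[
(\lambda a)^{n+2}=(\lambda a)^nh_{\lambda a}=(\lambda^na^n)(\lambda^2h_a)=\lambda^{n+2}(a^nh_a)=\lambda^{n+2}a^{n+2},
\]
where I use that $\lambda\in\mathscr{C}\subseteq\mathrm{End}_H(U)$ commutes with $h_a\in H$ and that powers of $\lambda$ commute with one another.

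The only step that is not a purely formal manipulation, and the one I expect to be the main obstacle, is (d). I would prove $a^nh_{e,a}=2a^{n+1}$ by induction on $n$, with base cases $eh_{e,a}=2a$ and $ah_{e,a}=2a^2$; these are linearizations of the squaring map $x\mapsto x^2=eh_x$ and belong to the standard stock of identities for special Moufang sets with abelian root groups, as developed in \cite{DS}, \cite{DS2} and \cite{G22}. For the inductive step one writes $a^{n+2}h_{e,a}=a^nh_ah_{e,a}$ and must move $h_a$ past $h_{e,a}$; using that $h_a$ commutes with $h_{e,a}$ --- equivalently with $h_{e+a}$, which is likewise available from the cited literature --- one obtains $a^{n+2}h_{e,a}=a^nh_{e,a}h_a=2a^{n+1}h_a=2a^{n+3}$ by the inductive hypothesis and (a). If one prefers not to quote the commutation, it can be circumvented by a simultaneous induction built on the polarized fundamental identity $h_{bh_c,dh_c}=h_ch_{b,d}h_c$ (a consequence of (QJ3)), which expresses $h_{a^2,a}$ through $h_{e,a^{-1}}$ and thereby reduces the step for $a$ at level $n+2$ to (d) for $a^{-1}$ at a lower level; this bookkeeping is the one point where the argument requires genuine care rather than routine calculation.
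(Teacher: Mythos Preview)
Your proof is correct and follows essentially the same route as the paper's: induction via (QJ3) for (b), parts (a) and (b) combined for (c), the commutation of $h_a$ with $h_{e+a}$ (which the paper cites from \cite[Proposition 5.8(3)]{DS}) for the inductive step in (d), and the centroid identity $h_{\lambda a}=\lambda^2 h_a$ for (e). Your argument for (c) is in fact slightly cleaner than the paper's, which detours through $\lfloor mn/2\rfloor$, and your remark about extending $a^n$ to negative $n$ correctly addresses what appears to be a typo in the hypothesis of (a) (the paper's own use of (a) in the proof of (c) only needs $2m\le n$).
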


\begin{proof} The first statement is very easy to see.
We prove (b), (c), (d) and (e) by induction on $n$. 
\begin{itemize}
\item[\rm (b)] For $n=0$ we have $h_{a^0}=h_e =1=h_a^0$, for $n=1$ we have 
$h_{a^1}=h_a =h_a^1$. Moreover, we have $h_{a^{n+2}}
=h_{a^n h_a} =h_a h_a^n h_a =h_a^{n+2}$. Thus the claim follows.
\item[\rm (c)] The statement is trivially true for $m\in \{0,1\}$, 
therefore we may assume $m\geq 2$. 
We have $(a^m)^0=e=a^{m\cdot 0}$ and 
$(a^m)^1= a^m =a^{m\cdot 1}$. Moreover, we have 
\begin{eqnarray*}
(a^m)^{n+2}&=&(a^m)^n h_{a^m}= a^{m\cdot n}h_a^m \\
&=&
a^{m\cdot n -2\cdot \lfloor \frac{m\cdot n}{2}\rfloor} h_a^{ \lfloor 
\frac{m\cdot n}{2}\rfloor} h_a^m \\
&=&
a^{m\cdot n -2\cdot \lfloor \frac{m\cdot n}{2}\rfloor} h_a^{ \lfloor 
\frac{m\cdot n}{2}\rfloor+m } \\
&
= & a^{m\cdot n +2m}\\ &=& a^{m\cdot (n+2)},
\end{eqnarray*} as claimed. 
\item[\rm (d)] For $n=0$ and $n=1$ this is \cite[Lemma 3.5 (a) and (c)]{G22}. By \cite[Proposition 5.8 (3)]{DS} the maps $h_a$ and $h_{e+a}$ commmute.
Hence $h_a$ commutes with $h_{a,e}=h_{a+e}-h_a-h_e = h_{a+e}-h_a -\mathrm{id}_U$. Thus the claim follows by induction. 
\item[\rm (e)] This is also clear for $n=0$ and $n=1$. 
Suppose the claim is true for $n$. 
Since $\lambda $ is 
contained in the centroid, we have 
$$(\lambda a)^{n+2} = (\lambda a)^n h_{\lambda a} = \lambda^n a^n 
h_{\lambda a} = \lambda^n \lambda^2 a^n h_a = \lambda^{n+2}a^{n+2}.$$
This proves the claim.
\end{itemize}
\end{proof}

\noindent
We will denote elements of the form $a^2 =eh_a$ as {\it squares}.

\begin{lemma}\label{squares}
If char$U\ne 2$, then the group $U$ is generated by squares.
\end{lemma}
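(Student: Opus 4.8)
The plan is to exploit the linearised Hua identity $a^{n}h_{e,a}=2a^{n+1}$ of Lemma~\ref{ff}(d) in the trivial case $n=0$: it expresses $2a$, for every $a\in U$, as an explicit combination of squares, after which one divides by $2$ --- which is legitimate exactly because char$U\ne 2$. Throughout I write $W$ for the subgroup of $U$ generated by the squares $\{eh_a\mid a\in U^{\#}\}$; the goal is to show $W=U$.

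First I would apply Lemma~\ref{ff}(d) with $n=0$, obtaining $eh_{e,a}=2a^{1}=2a$ for all $a\in U$. Unfolding the definition $h_{e,a}=h_{e+a}-h_e-h_a$ then gives
\[
 2a \;=\; eh_{e+a}-eh_e-eh_a \;=\; (e+a)^{2}-e^{2}-a^{2}.
\]
Each of the three terms $(e+a)^{2}=eh_{e+a}$, $e^{2}=eh_e$ and $a^{2}=eh_a$ on the right is a square (or equals $0$, when the relevant subscript is $0$, since $h_0$ is the zero map), and $W$, being a subgroup, contains their inverses as well; hence the right-hand side lies in $W$. Therefore $2U\subseteq W$.

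Finally I would invoke char$U\ne 2$ to conclude that multiplication by $2$ is a surjective endomorphism of the abelian group $U$: if char$U=0$ then $U$ is uniquely divisible, and if char$U=p$ with $p$ odd then $U$ is an $\mathbb{F}_p$-vector space on which $2=1+1$ is a unit. Thus $U=2U\subseteq W$, so $W=U$.

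I do not expect a genuine obstacle here; the proof is short. The only points that deserve a moment's care are that the minus signs in the displayed identity are harmless because $W$ is a subgroup (so $-e^{2}$ and $-a^{2}$ lie in $W$ even though they need not themselves be squares), and that the degenerate cases $a=0$ and $e+a=0$ cause no trouble since $eh_0=0\in W$.
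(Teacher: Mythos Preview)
Your proof is correct and is essentially the same as the paper's: both apply Lemma~\ref{ff}(d) at $n=0$ to obtain $2a=(e+a)^{2}-a^{2}-e^{2}$ and then divide by $2$ using that $U$ is uniquely $2$-divisible when char$U\ne 2$. Your version is simply a bit more explicit about the degenerate subscripts and about why $2$ is invertible.
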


\begin{proof}
We have $2a =eh_{a,e} = eh_{a+e}-eh_a-eh_e =(a+e)^2 -a^2 -e^2$ by \ref{ff}(d). Since $U$ is uniquely $2$-divisible, the 
claim follows.
\end{proof}

\begin{lemma}\label{f} For $a\in U^*$ and $b\in U$ we have $a^{-1}h_{a,b}=2b$.
\end{lemma}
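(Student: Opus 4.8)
The plan is to deduce the identity from Lemma~\ref{ff}(d) by passing to an isotope, thereby avoiding any direct analysis of $h_{a+b}$. Fix $a\in U^*$. The $a$-isotope $(U,\mathscr{H}^a,a)$ describes the same Moufang set $\mathbb{M}(U,\tau)=\mathbb{M}(U,\mu_a)$, which is still special with $U$ abelian; hence every result established so far under the standing hypotheses of this section applies verbatim to $(U,\mathscr{H}^a,a)$, now with $a$ playing the role of the neutral element and $h^a_x=h_a^{-1}h_x$ playing the role of $h_x$. Writing $h^a_{x,y}:=h^a_{x+y}-h^a_x-h^a_y$, Lemma~\ref{ff}(d) applied to this isotope with $n=0$ reads $a\,h^a_{a,b}=2b$ for all $b\in U$.

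It now suffices to unwind the definition of the isotope. Since $h^a_x=h_a^{-1}h_x$, we have
\[
h^a_{a,b}=h^a_{a+b}-h^a_a-h^a_b=h_a^{-1}\bigl(h_{a+b}-h_a-h_b\bigr)=h_a^{-1}h_{a,b},
\]
so the identity above becomes $a\,h_a^{-1}h_{a,b}=2b$. On the other hand $a^{-1}h_a=a$ (Lemma~\ref{ff}(a) with $n=m=1$), and applying $h_a^{-1}$ gives $a\,h_a^{-1}=a^{-1}$. Substituting this into $a\,h_a^{-1}h_{a,b}=2b$ yields $a^{-1}h_{a,b}=2b$.

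The only thing that genuinely needs checking is that $(U,\mathscr{H}^a,a)$ really does inherit the hypotheses required for Lemma~\ref{ff}(d) — it does, being the very same special Moufang set with abelian root groups — together with the bookkeeping that Lemma~\ref{ff}(d), stated in terms of $h_{e,b}$, becomes the statement in terms of $h^a_{a,b}$ after relabelling the neutral element; both points are routine. By contrast, a direct attack expanding $a^{-1}h_{a,b}=a^{-1}h_{a+b}-a-a^{-1}h_b$ looks harder, since it would require a formula for $a^{-1}h_{a+b}$, which the isotope argument neatly sidesteps. (As a consistency check, for $b=a$ one has $h_{a,a}=h_{2a}-2h_a=2h_a$, hence $a^{-1}h_{a,a}=2a^{-1}h_a=2a$, in agreement with $2b$.)
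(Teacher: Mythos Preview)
Your argument is correct and takes a genuinely different route from the paper's. The paper simply invokes \cite[Lemma 5.7(2)]{DS}, which states $a\tau\,h_{a,b}=-2b$; since $a^{-1}=-a\tau$ and $h_{a,b}$ is additive, the result is immediate. You instead pass to the $a$-isotope and read off $a\,h^a_{a,b}=2b$ from the $n=0$ case of Lemma~\ref{ff}(d), then unwind via $h^a_{a,b}=h_a^{-1}h_{a,b}$ and $a\,h_a^{-1}=a^{-1}$. The isotope approach is more self-contained relative to the present paper (it leans on Lemma~\ref{ff}(d), already proved here), while the paper's approach is shorter but outsources the work to \cite{DS}. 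Your method also illustrates a useful general principle: identities in the neutral element $e$ transport to identities in an arbitrary $a$ by isotopy.

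One small caveat on your citation of Lemma~\ref{ff}(a) with $n=m=1$ for $a^{-1}h_a=a$: as written, the hypothesis ``$2m\geq n$'' appears to be a typo for ``$2m\leq n$'' (note the lemma is stated for all $a\in U$, including $a=0$, for which $a^{-1}$ is undefined, and the proof of part~(c) uses the inequality in the $\leq$ direction). The identity $a^{-1}h_a=a$ is nonetheless standard for special Moufang sets with abelian root groups and can be cited directly from the literature (e.g.\ \cite{DS}), so your argument stands; you may just want to point to a cleaner reference for that step.
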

\begin{proof}
By \cite[Lemma 5.7(2)]{DS} we have $a\tau h_{a,b}=-2b$. Thus the claim follows.
\end{proof}

\begin{lemma}\label{Lemma 1} For $a\ne e$ we have $(e-a)^{-1} h_{a,b}=\big( (e-a)^{-1}-e\big)h_{e,b}$.
\end{lemma}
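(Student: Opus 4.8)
The plan is to reduce the identity to the already-established Lemma \ref{f} by writing $a = e - (e-a)$ and exploiting biadditivity (in the second argument) of the Hua maps together with the definition of $h_{x,y}$. First I would set $c := e - a$, so that $c \ne 0$ since $a \ne e$, and $a = e - c$. The goal becomes $c^{-1} h_{e-c,\,b} = (c^{-1} - e)\, h_{e,b}$ for all $b \in U$.

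The key computational step is to expand $h_{e-c}$. Since $h_{x,y} = h_{x+y} - h_x - h_y$ is symmetric and biadditive in the sense used throughout (this is how $h_{a,b}$ is defined in the Notation after Theorem \ref{MQ}, and biadditivity in each slot is exactly what earlier lemmas like \ref{ff}(d) and \ref{squares} rely on), I can write
\begin{equation*}
h_{e-c} = h_{e + (-c)} = h_e + h_{-c} + h_{e,-c} = \mathrm{id}_U + h_c - h_{e,c},
\end{equation*}
using $h_{-c} = h_c$ (specialness, cf. the remark that $h_a = Q_a$ is quadratic) and $h_{e,-c} = -h_{e,c}$. Therefore
\begin{equation*}
h_{e-c,\,b} = h_{(e-c)+b} - h_{e-c} - h_b.
\end{equation*}
Rather than expanding this symmetric form directly, it is cleaner to use biadditivity in the \emph{first} argument: $h_{e-c,\,b} = h_{e,b} - h_{c,b}$. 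Then $c^{-1} h_{e-c,\,b} = c^{-1} h_{e,b} - c^{-1} h_{c,b}$. Now apply Lemma \ref{f} with $a$ replaced by $c$: $c^{-1} h_{c,b} = 2b$. Also, by Lemma \ref{ff}(d) applied with $n=0$ (i.e. $e\, h_{e,b} = 2b$, noting $e^{-1}=e$ since $e\tau = \mu_e$ interchanges $0,\infty$ and $e h_e = e$ gives $e^2 = e$, hence $-e\tau = e$), we get $2b = e\, h_{e,b}$. Hence
\begin{equation*}
c^{-1} h_{e-c,\,b} = c^{-1} h_{e,b} - e\, h_{e,b} = (c^{-1} - e)\, h_{e,b},
\end{equation*}
which is the desired identity after substituting back $c = e-a$.

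The main obstacle I anticipate is not the algebra but making sure the biadditivity of $h_{\cdot,\cdot}$ in the first argument is legitimately available: the definition only introduces $h_{a,b}$ as a bilinear-looking correction term, and full biadditivity of $\mathscr{H}$ is precisely one of the properties \emph{not} yet known (it is what the main theorem is working toward). However, what I actually need is weaker — I only need $h_{x+y,\,b} = h_{x,b} + h_{y,b}$, equivalently additivity of $x \mapsto h_{x,b}$ for \emph{fixed} $b$, which should follow from the known structure (it is used implicitly in \ref{ff}(d)'s induction and in \ref{squares}); if the paper has only established the special case with one argument equal to $e$, I would instead route entirely through Lemma \ref{f} and the identity $h_{e-a} = \mathrm{id} + h_a - h_{e,a}$, applying $c^{-1} = (e-a)^{-1}$ to $h_{e-a,b} = h_{(e-a)+b} - (\mathrm{id} + h_a - h_{e,a}) - h_b$ and comparing with the expansion of $h_{a,b}$. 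Either way the computation is short; the care needed is purely in citing the right additivity fact from \cite{DS} or the earlier lemmas.
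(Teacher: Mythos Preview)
Your proposal is correct and essentially identical to the paper's proof: the paper also proves $c^{-1}h_{e-c,b}=(c^{-1}-e)h_{e,b}$ via $h_{e-c,b}=h_{e,b}+h_{-c,b}$, Lemma~\ref{f}, and $eh_{e,b}=2b$, and then substitutes $c\mapsto e-a$ at the end (you substitute at the start). The concern you flag about additivity in the first slot is exactly right and is resolved in the paper by citing \cite[5.10(1)]{DS}, so you should simply invoke that reference rather than worrying about circularity with the main theorem.
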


\begin{proof}
For $a\ne 0$ we have $$a^{-1}h_{e-a,b} =a^{-1}h_{e,b}+a^{-1}h_{-a,b} =
a^{-1}h_{e,b} -2b =a^{-1}h_{e,b}-eh_{e,b}=( a^{-1}-e)h_{e,b}$$
by \cite[5.10(1)]{DS}, \ref{f} and \ref{ff}(d).
Replacing $a$ by $e-a$ yields
$$(e-a)^{-1}h_{a,b} =\big( (e-a)^{-1}-e\big) h_{e,b},$$
as desired.
\end{proof}

\begin{prop}\label{series} Suppose that $v:\Bbbk\to \mathbb{R}\cup \{\infty\}$ is a valuation and that $B$ is a basis of $U$. 
For $a=\sum_{b\in B} \lambda_b \cdot b\in U$ set $\omega(a):=\min\{
v(\lambda_b)\mid b\in B\}$. Suppose further that all Hua maps are continuous with respect to $\omega$. 
For all $t\in \Bbbk \cap \mathscr{C}$ with $v(t)>0$ and all $a\in U$ with $\omega(a^n)=0$ for all $n\in \mathbb{N}$ we have
$$(e-ta)^{-1} =\sum_{n=0}^\infty t^n a^n.$$
\end{prop}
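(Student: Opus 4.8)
The plan is to reduce the assertion to a single telescoping identity for the partial sums $s_N:=\sum_{n=0}^N t^na^n$ and then to ``invert'' the Hua map $h_{e-ta}$. Write $b:=ta$. Since $t\in\mathscr{C}$, Lemma \ref{ff}(e) gives $b^n=(ta)^n=t^na^n$, and property (iii) of the minimum norm in Notation \ref{not}, together with the hypothesis $\omega(a^n)=0$, yields $\omega(b^n)=\omega(a^n)+v(t^n)=n\,v(t)$, which tends to $\infty$ since $v(t)>0$; hence $s_N=\sum_{n=0}^N b^n$ is a Cauchy sequence with $\omega(s_{N+1}-s_N)=\omega(b^{N+1})=(N+1)v(t)\to\infty$. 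Moreover $\omega(e)=\omega(a^0)=0$ while $\omega(b)=v(t)\notin\{0,\infty\}$, so $b\neq 0$, $b\neq e$, and $\omega(e-b)=0$ (the two values being distinct), hence $e-b\neq 0$; thus $(e-b)^{-1}$ is defined and $h_{-b}$, $h_{e-b}$ are automorphisms of $U$ by Theorem \ref{MS}.

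The heart of the argument is the elementary identity
$$b^nh_{e-b}=b^n-2b^{n+1}+b^{n+2}\qquad(n\in\NN).$$
To prove it I would expand $h_{e-b}=h_e+h_{-b}+h_{e,-b}$ and evaluate each summand on $b^n$: first $b^nh_e=b^n$, since $h_e=\mathrm{id}_U$ (Lemma \ref{ff}(b) with $n=0$); next $b^nh_{-b}=b^nh_b=b^{n+2}$, using $h_{-b}=h_b$ (because $-1$ lies in the prime field, hence in $\mathscr{C}$) and the recursive definition $b^{n+2}=b^nh_b$; and finally $b^nh_{e,-b}=-2b^{n+1}$, obtained by applying Lemma \ref{ff}(d) to the element $-b$ and substituting $(-b)^k=(-1)^kb^k$ from Lemma \ref{ff}(e). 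Nothing here depends on $\mathrm{char}\,U$. Summing over $n=0,\dots,N$ and writing $b^n-2b^{n+1}+b^{n+2}=(b^{n+2}-b^{n+1})-(b^{n+1}-b^n)$, the sum telescopes to
$$s_Nh_{e-b}=(e-b)+(b^{N+2}-b^{N+1}).$$

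To finish, set $r_N:=(e-b)^{-1}-s_N$. Since $(e-b)^{-1}h_{e-b}=e-b$, subtracting the telescoped formula gives $r_Nh_{e-b}=b^{N+1}-b^{N+2}$, whence $r_N=(b^{N+1}-b^{N+2})\,h_{e-b}^{-1}$. The decisive point is that $h_{e-b}^{-1}=h_{(e-b)^{-1}}$ is again a Hua map, hence continuous with respect to $\omega$ by hypothesis; since $\omega(b^{N+1}-b^{N+2})=\min\{(N+1)v(t),(N+2)v(t)\}=(N+1)v(t)\to\infty$, continuity of $h_{e-b}^{-1}$ at $0$ forces $\omega(r_N)\to\infty$. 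Therefore $s_N\to(e-b)^{-1}$ in the $\omega$-metric, i.e. $(e-ta)^{-1}=\sum_{n=0}^\infty t^na^n$, as claimed.

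I expect two spots to require care. The first is the identity $b^nh_{e,-b}=-2b^{n+1}$: it must be obtained without any bilinearity of $\mathscr{H}$ (which is not yet available) and in every characteristic, which is precisely why the computation is routed through the already-established power identities of Lemma \ref{ff} rather than through a linearisation of $h$. The second, and the only place the topological hypothesis is used, is the passage from $\omega(r_Nh_{e-b})\to\infty$ to $\omega(r_N)\to\infty$: this works not through completeness of $U$ or an open-mapping theorem, but simply because the inverse of the Hua map $h_{e-b}$ is once more a Hua map, namely $h_{(e-b)^{-1}}$. For a self-contained account one invokes the standard Moufang-set identities $c^{-1}h_c=c$ and $h_c^{-1}=h_{c^{-1}}$ for $c\in U^\#$, applied here with $c=e-b$.
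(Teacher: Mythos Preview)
Your proof is correct and follows essentially the same route as the paper's: both apply $h_{e-ta}$ to the remainder $(e-ta)^{-1}-\sum_{k=0}^N t^ka^k$, expand $h_{e-ta}=h_e+h_{-ta}+h_{e,-ta}$ via Lemma~\ref{ff}(d),(e), obtain the telescoped expression $t^{N+1}a^{N+1}-t^{N+2}a^{N+2}$, and then pass back through the continuous inverse $h_{e-ta}^{-1}=h_{(e-ta)^{-1}}$. Your substitution $b=ta$ and your isolation of the single-term identity $b^nh_{e-b}=b^n-2b^{n+1}+b^{n+2}$ before summing make the telescoping more transparent, but the argument is the same.
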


\begin{proof}
For all $n\in \NN$ we have by \ref{ff}(d) and (e).
\begin{eqnarray*}
\big( (e-ta)^{-1}-\sum_{k=0}^n t^ka^k\big)h_{e-ta} &=& 
(e-ta)^{-1}h_{e-ta} -\sum_{k=0} t^k a^k h_{e-ta} \\ &=&
e-ta-\sum_{k=0}^n t^k a^k h_{e-ta} \\
&=& e-ta-\sum_{k=0}^n t^ka^kh_e -\sum_{k=0}^n t^ka^kh_{e,-ta}-\sum_{k=0}^n t^ka^k h_{-ta} \\ &=& 
e-ta-\sum_{k=0}^n t^ka^k -\sum_{k=0}^n (ta)^k h_{e,-ta}-\sum_{k=0}^n t^k 
t^2 a^k h_{a} \\
&=& -2ta-\sum_{k=2}^n t^k a^k +2\sum_{k=1}^{n+1} t^k a^{k} -\sum_{k=2}^{n+2} t^k a^k \\
&=& t^{n+1} a^{n+1}-t^{n+2} a^{n+2}.
\end{eqnarray*}

It follows that $\omega\left( \big( (e-ta)^{-1}-\sum_{k=0}^n t^ka^k\big)h_{e-ta}\right) \geq
(n+1)\cdot v(t)$. Thus the sequence 
$\left( \big( (e-ta)^{-1}-\sum_{k=0}^n t^ka^k\big)h_{e-ta}\right)_{n\geq 0}$ converges to 
$0$. Since $h_{e-ta}^{-1}= h_{(e-ta)^{-1}}$ is continuous, we also have 
$\lim_{n\to\infty} (e-ta)^{-1} -\sum_{k=0}^n t^k a^k =0$, thus $\sum_{n=0}^\infty t^n a^n
=(e-ta)^{-1}$.
\end{proof}

\noindent
We are now able to prove our main result:

\begin{theorem}\label{main} Suppose that there is a subfield $\Bbbk$ of $\mathrm{End}_H(U)$ with 
\begin{itemize}
\item[\rm (i)] $\dim_\Bbbk U <\infty$, 
\item[\rm (ii)] $|\Bbbk\cap \mathscr{C}| =\infty$, 
\item[\rm (iii)] $U$ is generated by the set of squares $\{a^2\mid a\in U\}$ as a $\mathrm{End}_H(U)$-module and
\item[\rm (iv)] there is $\lambda\in \Bbbk^* \cap \mathscr{C}$ such that $\lambda-1\in \Bbbk^* \cap \mathscr{C}$ as well.
\end{itemize}
Then $(U,\mathscr{H},e)$ is a quadratic Jordan division algebra. 
\end{theorem}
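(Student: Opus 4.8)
The plan is to reduce Theorem \ref{main} to the criterion provided by Theorem \ref{criterion}: it suffices to produce a subfield $\Bbbk_0$ of $\mathscr{C}$ and an element $1\ne\lambda\in\Bbbk_0^*$ with $h_{e,\lambda\cdot a}=\lambda\cdot h_{e,a}$ for all $a\in U^\#$. Since $\lambda\in\Bbbk^*\cap\mathscr{C}$ lies in the centroid, $h_{e,\lambda\cdot a}=h_{\lambda a+e}-h_{\lambda a}-h_e$; by \ref{ff}(e) (or directly from the centroid relations) the term $h_{\lambda a}=\lambda^2 h_a$ splits off nicely, but the cross term $h_{\lambda a+e}$ does not split formally, so the identity $h_{e,\lambda\cdot a}=\lambda\cdot h_{e,a}$ is a genuine linearity statement that must be forced by an analytic argument rather than an algebraic manipulation. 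This is where the geometric series of Proposition \ref{series} enters.

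The first main step is to equip $U$ with a suitable ultrametric. Using condition (ii), pick a function $f$ from $\Bbbk\cap\mathscr{C}$ that is not constant on any set in a chosen free ultrafilter $\mathscr{F}$ (possible because $\Bbbk\cap\mathscr{C}$ is infinite); pass to the ultrapower Moufang set $\tilde{\mathbb{M}}:=\mathbb{M}(U,\tau)^S/\mathscr{F}$, which by the results of Section 2 is again special with $\tilde U=U^S/\mathscr{F}$ abelian, with centroid containing $[(T_s)]$ for $T_s$ in $\mathscr{C}$, and with $\tilde{\Bbbk}=\Bbbk^S/\mathscr{F}$ a subfield of $\mathrm{End}_{\tilde H}(\tilde U)$ over which $\tilde U$ is still finite-dimensional with basis $B$. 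By Lemma \ref{val}(b) there is a valuation $v$ on $\tilde{\Bbbk}$, trivial on $\Bbbk$, with $v(x)=1$ for the transcendental element $x:=[(f_s)]$. Form the minimum norm $\omega$ on $\tilde U$ with respect to $v$ and $B$ as in Notation \ref{not}. By Lemma \ref{isometry}(a) every Hua map $h_a$ of $\tilde{\mathbb{M}}$ with transformation matrix over $\Bbbk\subseteq\mathcal O$ is $\omega$-continuous — and here we use conditions (iii) and (iv) to arrange that the Hua maps of the original (hence of the ultrapowered) Moufang set have matrices with entries in the relevant valuation ring: squares $eh_a=a^2$ generate $U$ as an $\mathrm{End}_H(U)$-module, and by \ref{ff}(d) together with $\lambda,\lambda-1\in\mathscr{C}$ one expresses $h_{e,a}$ (and then $h_a$ on generators) in terms of quantities $(\mu a)^2$ with $\mu\in\Bbbk\cap\mathscr{C}$, which lie in the $\Bbbk$-span of $B$; this is the bookkeeping that makes $\omega$ a norm preserved well enough by all Hua maps.

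The second main step is to apply Proposition \ref{series}. For $a\in\tilde U$ with $\omega(a^n)=0$ for all $n$ — which can be arranged for suitable scalings, again using the infinitude of $\Bbbk\cap\mathscr{C}$ and Lemma \ref{Vandermonde} to control the norms $\omega(a^n)$ — and for $t\in\Bbbk\cap\mathscr{C}$ with $v(t)>0$ (e.g. $t=x$), we get $(e-ta)^{-1}=\sum_{n\ge0}t^n a^n$. Now plug this into the identity of Lemma \ref{Lemma 1}, $(e-\tilde a)^{-1}h_{\tilde a,b}=\bigl((e-\tilde a)^{-1}-e\bigr)h_{e,b}$ with $\tilde a=ta$: the left side becomes $\sum_{n\ge0}t^n a^n h_{ta,b}$ and the right side $\sum_{n\ge1}t^n a^n h_{e,b}$. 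Using $h_{ta,b}$, the centroid relation for $t$, and \ref{f} (or \ref{ff}(d)) to re-expand $h_{ta,b}=h_{e,b}$-type terms plus a correction, one compares the two power series in $t$ coefficient by coefficient — legitimate because a convergent power series in the transcendental $x$ whose all coefficients vanish must be identically zero (the value group is Archimedean, so $\omega$ separates points and distinct powers of $t=x$ have strictly increasing $\omega$-value on the coefficients). Matching the coefficient of $t^1$ yields precisely $a^1 h_{e,b}$ against the linearized contribution, and more usefully matching coefficients after substituting the scaling $a\mapsto\lambda a$ and comparing with the unscaled series produces the sought relation $h_{e,\lambda\cdot b}=\lambda\cdot h_{e,b}$ in $\tilde U$, hence in $U$ since $U$ embeds in $\tilde U$ and $h_{e,\cdot}$ restricts correctly.

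With that identity in hand, Theorem \ref{criterion} (applied with the subfield $\Bbbk\cap\mathscr{C}$ of the centroid and the element $\lambda$, noting $\lambda\ne1$ may be assumed after possibly replacing $\lambda$ by $\lambda-1$ if necessary, both being in $\mathscr{C}^*$) gives that $(U,\mathscr H,e)$ is a quadratic Jordan division algebra, completing the proof. The main obstacle I anticipate is the coefficient-comparison step: one must be careful that all the power-series manipulations (reindexing, applying Hua maps term by term, invoking continuity of $h_{(e-ta)^{-1}}=h_{e-ta}^{-1}$) are justified in the $\omega$-topology, and that the norms $\omega(a^n)$ really can be forced to vanish uniformly in $n$ — this is exactly the role of the Vandermonde argument (Lemma \ref{Vandermonde}) combined with the isometry statement (Lemma \ref{isometry}(b)) for the invertible centroid elements $\lambda,\lambda-1$ whose matrices lie over $\Bbbk$. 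Everything else is the now-standard translation between the Moufang-set Hua data and Jordan axioms recorded in \cite{G22}.
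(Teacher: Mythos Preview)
Your overall architecture is right: pass to the ultrapower, get a valuation trivial on $\Bbbk$ with $v(t)=1$ for a transcendental centroid element $t$, form the minimum norm $\omega$, invoke Proposition~\ref{series} and Lemma~\ref{Lemma 1}, and finish with Theorem~\ref{criterion}. But the heart of your argument --- the ``coefficient-by-coefficient comparison'' of the two series --- is a genuine gap, and several of the auxiliary roles you assign to the hypotheses are off.

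The central problem is that $\sum_{n\ge 0} t^n a^n h_{ta,b}$ is \emph{not} a power series in $t$ with $t$-free coefficients: the map $h_{ta,b}=h_{ta+b}-h_{ta}-h_b$ depends on $t$ in an a~priori unknown way. Knowing that $h_{ta,b}=t\,h_{a,b}$ is essentially equivalent to the quadraticity of $\mathscr{H}$, which is exactly what you are trying to prove; so ``matching the coefficient of $t^1$'' is circular. The paper does \emph{not} compare coefficients. Instead it proceeds by lattice estimates: from $\omega\bigl(\sum_{n\ge 1} t^n a^n h_{e,b}\bigr)\ge 1$ one gets $\sum_{n=0}^N t^n a^n h_{ta,b}\in\Lambda_1$ for some $N$; the Vandermonde lemma (applied after substituting $a\mapsto\lambda a$, $b\mapsto\lambda b$ for many $\lambda\in\Bbbk\cap\mathscr{C}$) then isolates each individual term $t^n a^n h_{ta,b}\in\Lambda_1$. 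A bootstrap using the isometry of $h_a^m$ upgrades this to $(a^2)^m h_{ta^2,b}\in\Lambda_1$ for all $m$, which, fed back into the series with $a^2$ in place of $a$, yields the second-order estimate $e h_{ta^2,b}\equiv t a^2 h_{e,b}\pmod{\Lambda_2}$. Only then does the $\lambda$-substitution (using $\lambda,\lambda t\in\mathscr{C}$ with $v(\lambda t)=1$) give $a^2 h_{e,\lambda b}\equiv\lambda\, a^2 h_{e,b}\pmod{\Lambda_1}$, and the injectivity of $U\hookrightarrow\Lambda_0/\Lambda_1$ turns this congruence into an equality in $U$.

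Two further misattributions: first, for $a\in U$ one has $a^n\in U$ and hence $\omega(a^n)=0$ automatically (coordinates lie in $\Bbbk$, where $v$ is trivial), so Vandermonde is not needed to ``force $\omega(a^n)=0$''; its role is the term-separation above. Second, continuity (indeed isometry) of $h_a$ for $a\in U$ follows directly from Lemma~\ref{isometry}(b) because $h_a\in H$ commutes with $\Bbbk\subseteq\mathrm{End}_H(U)$ and thus has matrix entries in $\Bbbk$; conditions~(iii) and~(iv) are not used there. Condition~(iii) enters only at the very end, to pass from $a^2 h_{e,\lambda b}=\lambda\, a^2 h_{e,b}$ (proved on squares) to the same identity for all $a\in U$, and one still needs the $c$-isotope argument to obtain $a h_{c,\lambda b}=\lambda\, a h_{c,b}$ for arbitrary $c$ before invoking Theorem~\ref{criterion}.
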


\begin{proof}
We embed $\mathbb{M}(U,\tau)$ in its ultrapower 
$\mathbb{M}(\tilde{U},\tilde{\tau})$, where 
$\tilde{U}:=U^{\mathbb{N}}/\mathscr{F}$ for some free ultrafilter 
$\mathscr{F}$ on $\mathbb{N}$. 
Moreover, let $\tilde{\Bbbk}:= \Bbbk^{\mathbb{N}}/\mathscr{F}$. 
Let $n\mapsto t_n$ be an injective 
map from $\mathbb{N}$ to $\Bbbk\cap \mathscr{C}$ and set $t:=[(t_n)_{n\in \mathbb{N}}] \in \tilde{\Bbbk}$. 
Then $t$ is contained in 
the centroid of $\mathbb{M}(\tilde{U},\tilde{\tau})$. By \ref{val}(b) we have a valuation 
$v:\tilde{\Bbbk}\to \mathbb{R}\cup \{\infty\}$ that is trivial on $\Bbbk$ 
with $v(t)=1$. 

\medskip\noindent
Let $B$ be a $\Bbbk$-basis of $U$. Then $B$ is also a $\Bbbk$-basis of 
$\tilde{U}$. The map $\omega$ and the lattices $\Lambda_r$ are 
defined as in Notation \ref{not}.

\bigskip
\noindent
Let $a,b\in U$. Using Lemma \ref{Lemma 1} for $ta$ in place of $a$ 
and Proposition \ref{series} we have
$$\big( \sum_{n=0}^\infty t^n a^n\big) h_{b,ta}=
(e-ta)^{-1} h_{b,ta} = \big( (e-ta)^{-1}-e\big) h_{b,e} 
 =\big( \sum_{n=1}^\infty t^na^n\big)h_{b,e}.$$
Since the maps $h_{b,ta}$ and $h_{e,b}$ are linear and thus continuous, we get
\begin{equation}\label{gs}
\sum_{n=0}^\infty t^n a^n h_{b,ta} =\sum_{n=1}^\infty t^na^nh_{b,e}.
\end{equation}
For all $n\geq 1$ we have $a^n h_{b,e} \in U$ and thus $\omega( t^n a^nh_{b,e}) \geq n$.
Since the series $\sum_{n=0}^\infty t^n a^n h_{b,ta}$ converges, the sequence 
$\big( t^n a^n h_{ta,b}\big)_{n\geq 0}$ converges to $0$. Thus there is $N\in \NN$ 
such that $\omega( t^n a^n h_{ta,b}) \geq 1$ for all $n>N$. Since
$\omega( \sum_{n=0}^\infty t^n a^n h_{ta,b}) =\omega(\sum_{n=1}^\infty t^n a^nh_{b,e})\geq \min\{ \omega(t^n a^n h_{b,e})\mid n\geq 1\}\geq 1$, 
we also have $\omega( \sum_{n=0}^N t^na^n h_{ta,b})\geq 1$. 
Therefore we have $\sum_{n=0}^N t^n a^n h_{ta,b}\in \Lambda_{1}$. 

\medskip \noindent
Now let $\lambda\in \mathscr{C} \cap \Bbbk$.
We have $t^n (\lambda a)^n h_{\lambda ta,\lambda tb} = t^n \lambda^{n+2} a^n h_{a,b}$. Replacing
$a$ by $\lambda a$ and $b$ by $\lambda b$ in \eqref{gs}, we get
$$\sum_{n=0}^\infty t^n \lambda^{n+2} a^n h_{ta,b} =\sum_{n=0}^\infty t^n (\lambda a)^n h_{\lambda t a,\lambda b}=\sum_{n=1}^\infty t^n \lambda^n 
a^n h_{\lambda b,e}.$$
Since $\omega( t^n \lambda^{n+2} a^n h_{ta,b}) =\omega(t^n a^n h_{ta,b})$, we conclude that
$\sum_{n=0}^N t^n \lambda^{n+2} a^n h_{ta,b}\in \Lambda_{1}$. Dividing by $\lambda^2$ yields
$\sum_{n=0}^N t^n \lambda^{n} a^n h_{ta,b}\in \Lambda_{1}$. Since $|k\cap \mathscr{C}| \geq N+1$, 
Lemma \ref{Vandermonde} yields that $t^n a^n h_{ta,b} \in \Lambda_{1}$ for all 
$n\geq 0$.

\medskip\noindent
Now let $m\geq 1$. 
Using the result above for $a^{2m+2}$ instead of $a$ and $b h_a^m$ 
instead of $b$ for $n=0$, we get 
$$ a^{2m} h_{t a^2 ,b} h_a^m =  eh_a^m h_{ta^2,b} h_a^m= 
 e h_{ta^2 h_a^m,b h_a^m}= e h_{t a^{m+2},bh_a^m} \in \Lambda_{1},$$
thus $(a^2)^m h_{ta^2,b} = 
a^{2m} h_{ta^2,b}\in \Lambda_{1}$ since $h_a^m$ is an 
isometry. Replacing $a$ by $a^2$ in \eqref{gs} yields 
$$ \sum_{n=0}^\infty t^n (a^2)^n h_{b,ta^2} =
\sum_{n=1}^\infty t^n (a^2)^n h_{b,e}.
$$ 
Since $\omega\big(t^n (a^2)^n h_{b,ta^2}\big) \geq 2$ for all $n\geq 1$ 
and $\omega\big( t^n (a^2)^n h_{b,e}\big) \geq 2$ for all $n\geq 2$, 
we get $eh_{ta^2,b} \equiv ta^2h_{e,b}$ mod $\Lambda_{2}$.

\medskip \noindent
Let $ \lambda\in \mathscr{C}\cap \Bbbk^*$ such that $\lambda-1\in \Bbbk^* \cap \mathscr{C}$. Replacing $t$ by $\lambda t$ and 
$b$ by $\lambda b$ we get
$$\lambda ta^2 h_{e,\lambda b} \equiv eh_{\lambda ta^2, \lambda b} 
\equiv \lambda^{2} eh_{ta^2,b} \equiv \lambda^2 ta^2 h_{e,b} \text{ mod }
\Lambda_{2}.$$
Dividing by $\lambda t$ yields
$$a^2 h_{e,\lambda b} \equiv \lambda a^2 h_{e,b} \text{ mod } \Lambda_{1}.$$
Since the map $x \mapsto x+\Lambda_{1}$ from $U$ to $\Lambda_{0}/\Lambda_{1}$ 
is injective, we $a^2 h_{e,\lambda b} =\lambda a^2 h_{e,b}$. Since $U$ is generated by 
the squares as a $\mathrm{End}_H(U)$-module, we get $ah_{e,\lambda b}=\lambda ah_{e,b}$.
Now let $0\ne c \in U$. The same argument for the $c$-isotope yields 
that $ah_c^{-1} h_{c,\lambda b}=\lambda ah_c^{-1} h_{c,b}$. Replacing $a$ by $ah_c$, 
we get that $ah_{\lambda b,c} =\lambda ah_{b,c}$ for all $a,b,c\in U$ with $c\ne 0$.
This equation is trivially also true for $c=0$. Thus the claim follows by
\ref{criterion}.
\end{proof}

\noindent
By the classification of quadratic Jordan division algebras in 
\cite[Theorem 15.7]{McZ}, a quadratic Jordan division algebra over a field of characteristic not $2$ is a skewfield, a Jordan algebra of Clifford type for an anisotropic 
quadratic form, an involutory set or an Albert divison algebra. The corresponding little projective group is a $\mathrm{PSL}_2$ over a skewfield, an orthogonal group, a unitary group or an exceptional algebraic 
group of type $\mathrm{E}^{78}_{7,1}$. Therefore, we can formulate our result in the language of group theory. First, we need two auxiliary results.

\begin{lemma}\label{2trans} Let $G$ be a doubly transitive permutation group on a set $X$ such that for some $y\in X$ there is a normal subgroup $U_y$ of $G_y$ acting regularly on $X\setminus \{y\}$. For every $x\in X$ choose $g_x\in G$ 
with $yg_x =x$ and define $U_x:=U_y^{g_x}$. Then $U_x$ does not depend of 
the choice of $g_x$, and $\big( X, (U_x)_{x\in X}\big)$ is a Moufang set 
whose little projective group $G^\dagger$ is normal in $G$.
\end{lemma}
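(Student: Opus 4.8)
The plan is to verify the Moufang-set axioms directly, the whole argument hinging on one preliminary observation: that $U_x$ does not depend on the choice of $g_x$. Once that is in hand, a single identity of the form $U_x^g=U_{xg}$ (for arbitrary $g\in G$) falls out, and this identity simultaneously gives (MS2) and the normality of $G^\dagger$.

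First I would prove well-definedness. If $g_x,g_x'\in G$ both send $y$ to $x$, then $g_x'g_x^{-1}\in G_y$, and since $U_y\trianglelefteq G_y$ we have $U_y^{g_x'g_x^{-1}}=U_y$; conjugating by $g_x$ gives $U_y^{g_x'}=U_y^{g_x}$, so $U_x$ is well defined. The key consequence is that $g_x g$ is itself an admissible choice of element sending $y$ to $xg$, for \emph{any} $g\in G$; hence
\[ U_{xg} = U_y^{g_x g} = (U_y^{g_x})^g = U_x^{\,g} \]
for all $x\in X$ and all $g\in G$. Next, (MS1): the permutation $g_x$ fixes nothing relevant and restricts to a bijection $X\setminus\{y\}\to X\setminus\{x\}$, so conjugating the action of $U_y$ (which fixes $y$ and is regular on $X\setminus\{y\}$) by $g_x$ shows $U_x=U_y^{g_x}$ fixes $x$ and acts regularly on $X\setminus\{x\}$. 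For (MS2), given $x,z\in X$ and $g\in U_z\subseteq G$, the displayed identity immediately yields $U_x^{\,g}=U_{xg}$. (One may assume $|X|\ge 3$, which holds in the intended applications; otherwise there is nothing to prove.) Thus $\big(X,(U_x)_{x\in X}\big)$ is a Moufang set, and $G^\dagger=\langle U_x\mid x\in X\rangle\le G$.

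Finally, for the normality of $G^\dagger$: given $g\in G$, conjugation by $g$ sends each generator $U_x$ to the generator $U_{xg}$ by the displayed identity, hence permutes the generating set of $G^\dagger$; therefore $(G^\dagger)^g=G^\dagger$, and since $g$ was arbitrary, $G^\dagger\trianglelefteq G$. The argument is entirely elementary; the only thing requiring care is the ordering of the steps — proving well-definedness first, extracting the identity $U_x^{\,g}=U_{xg}$ from it, and then noticing that this one identity already subsumes both (MS2) and the normality claim. I do not anticipate any genuine obstacle beyond this bookkeeping.
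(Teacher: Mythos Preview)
Your proof is correct and follows essentially the same approach as the paper's: both first establish well-definedness of $U_x$ via $U_y\trianglelefteq G_y$, then use this to obtain $U_x^{\,g}=U_{xg}$ (the paper phrases this as ``$\{U_x\mid x\in X\}$ is a conjugacy class in $G$''), from which (MS2) and the normality of $G^\dagger$ follow immediately. Your version is slightly more explicit in isolating the identity and in verifying (MS1), but the argument is the same.
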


\begin{proof}
If $h_x$ is another element with $yh_x=x$, then $h_x g_x^{-1} \in G_y$, 
so $U_y^{h_x g_x^{-1}} =U_y$, hence $U_y^{h_x}=U_y^{g_x}$. This proves the 
first part. For $x,z\in X$ and $g\in U_x$ we therefore have 
$U_z^g =U_y^{g_yg}=U_{yg_yg} =U_{zg}$. This proves that 
$\big( X, (U_x)_{x\in X}\big)$ is a Moufang set. Moreover, 
$\{U_x\mid x\in X\}$ is conjugacy class of subgroups in $G$, hence 
$G^\dagger=\langle U_x\mid x\in X\rangle$ is normal in $G$.
\end{proof}

\begin{lemma}\label{centroid} Let $\mathbb{M}(U,\tau)$ be a proper Moufang set with $U$ abelian. Let $G^\dagger$ be its little projective group. Suppose that $G^\dagger \trianglelefteq G \leq \mathrm{Sym}(X)$. 
Then 
$Z(G_{\infty,0})$ is contained in the centroid of $\mathbb{M}(U,\tau)$.
\end{lemma}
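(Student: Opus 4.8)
The plan is to recall what the group $G$ knows about the Moufang structure and translate a centralizing condition into the defining property of the centroid. Recall that $\mathscr{C}$ consists of those $T\in\mathrm{End}_H(U)$ with $h_{aT}=T^2h_a$ for all $a\in U$; here $U=X\setminus\{\infty\}$ is identified with the root group $U_\infty$, and elements of $\mathrm{End}_H(U)$ are additive endomorphisms commuting with the action of the Hua subgroup $H=G^\dagger_{\infty,0}$. So I must take $z\in Z(G_{\infty,0})$, show that the map it induces on $U$ is additive, commutes with $H$, and satisfies the quadratic relation $h_{az}=z^2h_a$. Since $G_{\infty,0}$ normalizes $U_\infty$ (as $G_\infty$ does, $U_\infty$ being characteristic in $G_\infty^\dagger\trianglelefteq G_\infty$, and in any case $U_\infty\trianglelefteq G_\infty$), conjugation by $z$ induces an automorphism $\bar z$ of the group $(U,+)$; additivity is then automatic from $U$ abelian. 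Moreover $\bar z$ centralizes $H$ because $z\in Z(G_{\infty,0})\supseteq$ centralizer of $H$, so $\bar z\in\mathrm{End}_H(U)$, in fact $\bar z\in\mathrm{GL}(U)$.

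**The Hua-map relation.** The substantive step is the identity $h_{az}=z^2h_a$, equivalently, in the original notation, $h_{a}^{\,\bar z}=\bar z\,h_a\,\bar z$ read as endomorphisms, i.e. conjugation by $z$ sends the Hua map $h_a$ to $h_{a\bar z}$ and this equals $\bar z h_a\bar z$. Here I would use that $h_a$ is realized inside $G^\dagger$ as $\tau\mu_a$ (with $\mu_a\in G^\dagger$ the $\mu$-map), and that conjugation in $G$ permutes these group elements compatibly: for $g\in G$, $\mu_a^g=\mu_{ag}$ whenever $g$ fixes both $0$ and $\infty$ and normalizes the relevant root groups, because $\mu_a$ is characterized by its action (it interchanges $0,\infty$ and $U_\infty^{\mu_a}=U_0$, etc.). Thus $h_a^z=\tau^z\mu_a^z$. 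Since $z$ fixes $0$ and $\infty$ and centralizes $\tau=\mu_e$ lies in $H$ — wait: $\tau$ interchanges $0$ and $\infty$, so $\tau\notin G_{\infty,0}$; instead I should work with $h_a=\tau\mu_a\in H$ directly, or use $\mu_a\mu_e^{-1}\in H$. The clean route: for $z\in Z(G_{\infty,0})$ and any $h\in H\le G_{\infty,0}$ we have $h^z=h$; applying this to $h=h_a$ gives $h_a^z=h_a$. But also $h_a^z$ must equal the Hua map of $a\bar z$ in the conjugated Moufang structure — and since $G^\dagger\trianglelefteq G$, conjugation by $z$ is an automorphism of the Moufang set $\mathbb M(U,\tau)$ fixing $\infty$ and $0$, hence sends $h_a$ to $h_{a\bar z}$. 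Therefore $h_{a\bar z}=h_a$ for all $a$?? That would force $\bar z$ central in a trivial way — so the correct statement is that $h_a^z=h_{a\bar z}$ using the action on $U_\infty$, while separately $h_a^z=h_a$; reconciling the two via the transformation formula $h_a^{h_c}$ from Theorem~\ref{MS}/\ref{MQ} is exactly where the relation $h_{a\bar z}=\bar z h_a\bar z$ (equivalently $h_{a}=\bar z h_a \bar z$, i.e. $\bar z^2 h_a=h_a$... ) emerges. The precise bookkeeping of how $z$ acts on the label $a$ versus on the map $h_a$, using $U_a=U_0^{\alpha_a}$ and $\alpha_a^z=\alpha_{a\bar z}$, is the heart of it.

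**Carrying it out.** Concretely I would: (1) note $z$ normalizes $U_\infty$, giving $\bar z\in\mathrm{Aut}(U,+)\cap\mathrm{End}_H(U)$; (2) note $z$ normalizes $U_0$ as well (it fixes $0$), so $\alpha_a^z=\alpha_{a\bar z}$ for $a\in U$ directly from $\bar z$ additive, and hence $\mu_a^z=\mu_{a\bar z}$ by the characterization of $\mu$-maps (Theorem~\ref{MS}: $\mathbb M(U,\tau)=\mathbb M(U,\mu_a)$ and $\mu_a$ is determined by $a$); (3) since $h_a=\tau\mu_a$ and $z$ centralizes $\tau\mu_a\in H$, get $\tau\mu_a=(\tau\mu_a)^z=\tau^z\mu_a^z=\tau^z\mu_{a\bar z}$, where $\tau^z=\mu_e^z=\mu_{e\bar z}$; (4) compute the induced action on $U$: evaluating both sides as permutations of $X$ and restricting the resulting linear maps to $U_\infty\cong U$ gives $h_a=\mu_{e\bar z}\mu_{a\bar z}$ as a map on $U$, and comparing with $h_{a\bar z}=\mu_e\mu_{a\bar z}=\tau\mu_{a\bar z}$ and with $h_{e\bar z}=\tau\mu_{e\bar z}$ yields $h_a = h_{e\bar z}^{-1}\,\tau\cdot\tau\mu_{a\bar z}= h_{e\bar z}^{-1}h_{a\bar z}$, i.e. $h_{a\bar z}=h_{e\bar z}h_a$; (5) finally evaluate at $a=e$: $h_{e\bar z}=h_{e\bar z}h_e=h_{e\bar z}$, tautological, so instead use $\bar z$ acting on the neutral element — since $e\bar z$ is again a unit and $(U,\mathscr H^{e\bar z},e\bar z)$ is the $e\bar z$-isotope — combined with Lemma~\ref{ff}(e)-type homogeneity and the fact that $z$ centralizing all of $H$ forces $h_{e\bar z}=\bar z^2$ (this is the step where $z\in\mathit{Z}$, not just normalizing, is used: $h_{e\bar z}\in H$ is itself centralized, pinning it to the scalar $\bar z^2$), giving $h_{a\bar z}=\bar z^2 h_a$ as required, so $\bar z\in\mathscr{C}$.

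**Main obstacle.** The delicate point is step (5): extracting that $h_{e\bar z}=\bar z^2$ rather than merely $h_{e\bar z}=h_{e\bar z}$. This should follow because $z\in Z(G_{\infty,0})$ means $z$ centralizes the element $\mu_a\mu_b\in H$ for all units $a,b$; writing $\mu_a\mu_b$ in terms of the $\alpha$'s and $\tau$ and conjugating by $z$ produces, via $\alpha_c^z=\alpha_{c\bar z}$, the relation that $\bar z$ commutes with every $h_a$ — which is automatic — but the stronger centrality (commuting with $\tau$-translates appearing in $\mu_a$) squeezes out the scalar. I expect the cleanest formulation is: conjugation by $z$ fixes $\tau$ pointwise-as-element iff $\mu_{e\bar z}=\tau$ composed appropriately, which after untangling gives $e h_{e\bar z}=e\bar z h_e\bar z=e\bar z^2$, hence $h_{e\bar z}$ and $\bar z^2$ agree on $e$; then Hua-map rigidity (a Hua map is determined on all of $U$ once its interplay with a single unit is known, cf.\ the isotopy discussion) upgrades this to equality of maps. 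Making this rigidity precise, using only the results quoted in the excerpt, is the part that will require care rather than a one-line citation.
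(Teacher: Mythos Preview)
Your steps (1)–(4) are essentially correct and reproduce what the paper needs: $z$ acts on $U$ as an element $\bar z\in\mathrm{Aut}(U)\cap\mathrm{End}_H(U)$, one has $\mu_a^z=\mu_{a\bar z}$, and from $(\tau\mu_a)^z=\tau\mu_a$ you correctly extract $h_{a\bar z}=h_{e\bar z}\,h_a$. The gap is exactly where you place it, at step (5), and none of your three attempts closes it. The first is tautological, as you note. The second (``$h_{e\bar z}\in H$ is itself centralized, pinning it to the scalar $\bar z^2$'') is a non sequitur: being centralized by $z$ only says $\bar z$ commutes with $h_{e\bar z}$, which you already know for every Hua map; it does not identify $h_{e\bar z}$ with any particular element. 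The third rests on ``conjugation by $z$ fixes $\tau$'', which is false in general: $\tau=\mu_e$ swaps $0$ and $\infty$, so $\tau\notin G_{\infty,0}$ and there is no reason for $z$ to centralize it. Your formula $eh_{e\bar z}=e\bar z\,h_e\,\bar z$ presupposes the very identity $h_{e\bar z}=\bar z^2$ you are trying to prove, and even if you had it at $e$, the ``Hua-map rigidity'' you invoke is not available from the cited results.

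The missing idea is to look at how $\mu_a$ acts on $Z:=Z(G_{\infty,0})$ rather than how $z$ acts on $\tau$. Since $\mu_a$ interchanges $0$ and $\infty$, it normalizes $G_{\infty,0}$ and hence the characteristic subgroup $Z$. Because $Z$ is abelian and $\mu_a^2=1$, either $\mu_a$ inverts every element of $Z$, or some $1\ne z_0\in Z$ satisfies $z_0^{\mu_a}=z_0$ (take $z_0=h^{\mu_a}h$ for any $h$ not inverted). In the latter case, $z_0$ commutes with $\mu_a\mu_b\in H$ for every $b\in U^\#$, hence $z_0^{\mu_b}=z_0$ for all $b$; then $\mu_b=\mu_b^{z_0}=\mu_{b\bar z_0}$ forces $b\bar z_0\in\{b,-b\}$ for all $b$, so $\bar z_0=\pm\mathrm{id}$ and $z_0^{-1}=z_0=z_0^{\mu_a}$ after all. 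Thus $z^{\mu_a}=z^{-1}$ for all $z\in Z$ and all $a\in U^\#$. Now your computation finishes cleanly: from $\mu_e z^{-1}=z\mu_e$ one gets
\[
h_{e\bar z}=\mu_e\mu_{e\bar z}=\mu_e\,(z^{-1}\mu_e z)=z\,\mu_e^2\,z=z^2,
\]
hence $h_{a\bar z}=h_{e\bar z}h_a=\bar z^2 h_a$. The paper in fact bypasses your intermediate relation and computes directly $h_{a\bar z}=\mu_e\mu_a^z=\mu_e z^{-1}\mu_a z=z\mu_e\mu_a z=z^2 h_a$ once $z^{\mu_a}=z^{-1}$ is in hand.
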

\begin{proof} Let $Z:=Z(G_{\infty,0})$ and $H$ be the Hua subgroup of $\mathbb{M}(U;\tau)$. Then $Z\subseteq \mathrm{End}_H(U_\infty)$ since 
$H\leq G_{\infty,0}$, For every $a\in U^\#$, the element $\mu_a $ is contained in 
$G_{\{\infty,0\}}$ and therefore normalises $Z$. Since $\mu_a^2=1$, we have 
$h^{\mu_a}=h^{-1}$ for all $h\in Z$ or there is $1\ne z\in Z$ with $z^{\mu_a}=z$. 
Suppose the latter case holds. Then for all $b\in U^\#$ we have 
$z=z^{\mu_a\mu_b}= z^{\mu_b}$ since $\mu_a\mu_b\in H\leq G_{\infty,0}$. Therefore 
we have $\mu_b=\mu_b^h =\mu_{bh}$ for all $b\in U^\#$. Thus we have $bh\in \{b,-b\}$ 
for all $b\in U$. Therefore $h=1$ or $b^h =-b$ for all $b\in U$. In both cases we have
$h^{-1}=h=h^{\mu_a}$.

\medskip\noindent
Thus we have $h^{\mu_a}=h^{-1}$ for all $h\in Z$ and all $a\in U^\#$. We get 
$$h_{ah}= \mu_e \mu_{ah}= \mu_e \mu_a^h =\mu_e h^{-1} \mu_a h =h \mu_e\mu_a h =
h h_a h = h^2 h_a$$
\end{proof}

\begin{coro}\label{coro0} Let $G$ be a group acting doubly transitively on a set $X$. Suppose that for some $x\in X$ there is an abelian subgroup $U_x\leq G_x$ with
\begin{itemize}
\item[\rm (i)] $U_x$ acts regularly on 
 $X\setminus \{x\}$,
\item[\rm (ii)] $U_x$ does not contain an involution,
\item[\rm (iii)] $U_x$ is finitely generated as a $Z(G_{x,y})$-module for all 
$y\in X\setminus \{x\}$. 
\end{itemize}
Let $G^\dagger$ be the subgroup of $G$ generated by $\{U_x^g\mid g\in G\}$. 
Then $G^\dagger\trianglelefteq G$ and there is a commutative field $\Bbbk$ such 
that one of the following holds:
\begin{itemize}
\item[\rm (I)] char$\Bbbk=2$ and $G^\dagger\cong \mathrm{Aff}(\Bbbk)$. 
\item[\rm (II)] char$\Bbbk\ne 2$ and either $G^\dagger$ is either a classical group of relative $\Bbbk$-rank one or an exceptional algebraic group of type 
$\mathrm{E}^{78}_{7,1}$. 
\end{itemize}
\end{coro}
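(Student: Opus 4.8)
The plan is to reduce Corollary \ref{coro0} to the Main Theorem (Theorem \ref{main}) together with the classification of quadratic Jordan division algebras quoted just after its proof. First I would set $y\in X\setminus\{x\}$ and apply Lemma \ref{2trans} to conclude that $\big(X,(U_z)_{z\in X}\big)$ is a Moufang set with little projective group $G^\dagger\trianglelefteq G$, where $U_z:=U_x^{g_z}$; hypothesis (ii) guarantees $U_x$ has no involution, so in particular the Moufang set has abelian root groups and, once properness is established, cannot be one of the two improper special Moufang sets over $\F_2,\F_3$ (those have $U$ of exponent $2$ or $3$; exponent $2$ is excluded by (ii), exponent $3$ would force a specific small configuration which I would rule out directly, or absorb into case (I)). By \cite{S} a Moufang set with abelian root groups is special or improper; I would treat the improper case separately, showing it leads to $G^\dagger\cong\mathrm{Aff}(\Bbbk)$ for a commutative field $\Bbbk$ (this is the sharply $2$-transitive situation, where the root group is the additive group of a nearfield, and one invokes the fact that a finite-dimensional nearfield over a field in its kernel that is commutative — here the relevant ``kernel'' being $Z(G_{x,y})$, which by Lemma \ref{centroid} lies in the centroid — is a field; giving case (I)). So from now on assume $\mathbb{M}(U,\tau)$ is proper, hence special.

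Next I would verify the four hypotheses of Theorem \ref{main}. By Lemma \ref{centroid}, $Z(G_{x,y})\subseteq\mathscr{C}\subseteq\mathrm{End}_H(U)$; this is a commutative ring which, being contained in the centroid of a Moufang set whose root group is a division-algebra-like object, is in fact an integral domain, and since $U$ is finitely generated over it by (iii) while $U$ is torsion-free (char$U\ne 2$ will follow), $Z(G_{x,y})$ acts faithfully and one checks it is a field $\Bbbk$. With $\Bbbk:=\mathrm{Frac}$ of (a field inside) $Z(G_{x,y})$ acting on $U$, hypothesis (iii) of the corollary gives $\dim_\Bbbk U<\infty$, which is (i) of Theorem \ref{main}. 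For (ii), i.e. $|\Bbbk\cap\mathscr{C}|=\infty$: since $\Bbbk\subseteq\mathscr{C}$ by construction, this says $|\Bbbk|=\infty$; a finite $\Bbbk$ would make $U$ finite, forcing a finite proper Moufang set with abelian root groups, and the only such are the $\mathrm{PSL}_2$'s over finite fields (by the classification / Hering–Kantor–Seitz type results), again landing in case (II) with $\Bbbk$ finite — so I would simply allow finite $\Bbbk$ in case (II) and not insist on (ii), OR argue char must be $0$; cleaner is: if $\Bbbk$ is finite the conclusion (II) holds by direct classification of finite split BN-pairs of rank one, and if $\Bbbk$ is infinite we apply Theorem \ref{main}. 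For (iv), since char$U\ne2$ (no involutions in $U$, and $U$ is a group where $a\mapsto 2a$ must be injective — if char$U=p\ne0$ then $p\ne2$; I then take $\lambda$ with $\lambda,\lambda-1$ both units, e.g. $\lambda=-1$ when $\mathrm{char}\ne2,3$, or handle $\mathrm{char}=3$ by noting $2=-1$ is a unit and $2-1=1$; in general $-1\in\Bbbk^*\cap\mathscr{C}$ and $-1-1=-2\in\Bbbk^*\cap\mathscr{C}$ since char$\ne2$), so (iv) holds with $\lambda=-1$. Finally (iii) of Theorem \ref{main}: by Lemma \ref{squares}, since char$U\ne2$, $U$ is generated by squares $\{a^2\mid a\in U\}=\{eh_a\mid a\in U^\#\}$, even as a group, a fortiori as an $\mathrm{End}_H(U)$-module.

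Having checked the hypotheses, Theorem \ref{main} gives that $(U,\mathscr{H},e)$ is a quadratic Jordan division algebra over $\Bbbk$, with char$\Bbbk=\mathrm{char}\,U\ne2$. Then I invoke the classification \cite[Theorem 15.7]{McZ}: such an algebra is a (associative) skewfield, a Jordan algebra of Clifford type of an anisotropic quadratic form, an involutory set (hermitian matrix case), or an Albert division algebra. Translating via the standard dictionary between quadratic Jordan division algebras and Moufang sets, the little projective group $G^\dagger$ is respectively $\mathrm{PSL}_2$ of a skewfield, an orthogonal group of relative rank one, a unitary group of relative rank one, or the exceptional algebraic group of type $\mathrm{E}^{78}_{7,1}$ — all of which are either classical groups of relative $\Bbbk$-rank one or the stated exceptional group, giving conclusion (II). Normality $G^\dagger\trianglelefteq G$ was already recorded from Lemma \ref{2trans}.

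The main obstacle I anticipate is the bookkeeping at the boundary between the cases: precisely pinning down that the relevant ``kernel'' $Z(G_{x,y})$ (or a subfield of it) is a genuine commutative \emph{field} over which $U$ is a vector space — Lemma \ref{centroid} only puts it in the centroid, and one must argue it is a field and that finite generation as a module translates to finite $\Bbbk$-dimension — together with disposing of the degenerate small cases (improper Moufang sets, $U$ of exponent $3$, finite $U$) so that they land in (I) or (II) rather than being genuine exceptions. The Jordan-theoretic and classification inputs are black boxes; the real work is the case analysis and the verification that hypothesis (iii) of the corollary yields hypothesis (i) of Theorem \ref{main} with $\Bbbk$ an actual subfield of $\mathrm{End}_H(U)$, and that (ii) of Theorem \ref{main} may be bypassed or is automatic whenever we are not already in a classified finite situation.
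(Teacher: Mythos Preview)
Your overall architecture matches the paper's: split into improper vs.\ proper, use Lemma~\ref{2trans} and \cite{S} to get a special Moufang set with abelian root groups, dispose of the finite case separately via \cite{S1}, verify the hypotheses of Theorem~\ref{main}, and then invoke the Jordan classification for case~(II). But there are two places where the paper is sharper than your sketch.

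The obstacle you correctly flag---producing a genuine subfield $\Bbbk$ of $\mathrm{End}_H(U)$ with $\dim_\Bbbk U<\infty$---is resolved in the paper not by arguing directly that something inside the centroid is an integral domain, but via Schur's Lemma: the paper invokes \cite{SW} to get that $G_{x,y}$ acts irreducibly on $U_x$, whence $K:=\mathrm{End}_{G_{x,y}}(U_x)$ is a skewfield, and $\Bbbk$ is taken to be the (automatically commutative) subfield of $K$ generated by $Z(G_{x,y})$. Hypothesis~(iii) of the corollary then gives $\dim_\Bbbk U<\infty$ immediately, and Lemma~\ref{centroid} is only used afterwards to place $\Bbbk$ inside the centroid. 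This irreducibility-plus-Schur step is the missing ingredient in your proposal. Separately, your treatment of the improper case has a misidentification: when $G^\dagger$ is sharply $2$-transitive, $U_x$ coincides with the full point stabiliser $G^\dagger_x$, which in $\mathrm{Aff}(\Bbbk)$ is the \emph{multiplicative} group $\Bbbk^*$, not the additive group of a nearfield. The paper simply cites \cite{Tits} or \cite{DM} (a sharply $2$-transitive group with abelian point stabiliser is $\mathrm{Aff}(\Bbbk)$ for a commutative field $\Bbbk$) and then observes that $U_x\cong\Bbbk^*$ having no involution forces $-1=1$, i.e.\ $\mathrm{char}\,\Bbbk=2$; no nearfield-dimension argument is needed there, and indeed your invocation of $Z(G_{x,y})$ is problematic in this branch since $G^\dagger_{x,y}=1$.
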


\begin{proof} 
By \ref{2trans} $\mathbb{M}:=\big( X, (U_x)_{x\in X}\big)$ is a Moufang set and $G^\dagger$ is normal in $G$. Suppose that $\mathbb{M}$ is improper. Hence $G^\dagger$ is sharply $2$-transitive. Since $U_x$ is abelian, by \cite{Tits}[Remark 2 on page 
47/48] or \cite{DM}[Theorem 3.4B]
there is a commutative field $\Bbbk$ such that $G^\dagger \cong \mathrm{Aff}(\Bbbk)$. Since $\Bbbk^*\cong U_x$ does not contain an involution, we have char$\Bbbk=2$. Hence we are in case (I).

\medskip\noindent
Now suppose that $\mathbb{M}$ is proper. Since $U_x$ is abelian, the Moufang set $\mathbb{M}$ is special by \cite{S}. By \cite{SW} the group $G_{x,y}$ acts irreducibly on $U_x$, thus 
$K:=\mathrm{End}_{G_{x,y}}(U_x)$ is a skewfield by Schur's Lemma. Let $\Bbbk$ be the subfield of $K$ generated  by $Z(G_{x,y})$. Then $\Bbbk$ is commutative, and since there is no $2$-torsion in $U_x$, we have 
char$\Bbbk\ne 2$. Suppose that $Z(G_{x,y})$ is finite. Then $Z(G_{x,y})$ is algebraic over the prime field of $\Bbbk$, and so $\Bbbk$ is finite-dimensional 
over its prime field. Moreover, we have char$\Bbbk>0$, since the map $n^2 \cdot \mathrm{id}_U$ is contained in $H$ for all $n\in \mathbb{N}$ with $n$ relatively prime to the characteristic of $U$. Therefore $\Bbbk$ is finite as well. Since $U_x$ is finitely generated as a $Z(G_{x,y})$-module, 
we have $\dim_\Bbbk U <\infty$, hence $U$ is finite. By \cite{S1} we get $G^\dagger \cong \mathrm{PSL}_2(F)$ for some finite field $F$ with $\Bbbk \subseteq F$.

\medskip\noindent
Therefore we may assume that $Z(G_{x,y})$ is infinite. By \ref{centroid} and our main theorem \ref{main} we conclude $\mathbb{M}(U,\tau)\cong \mathbb{M}(J)$ 
for some quadratic Jordan division algebra $J$. If $J$ is special, then $G$ is a classical group by 
\cite{Tim2}[Corollary 5.4]. If $J$ is exceptional, then $G^\dagger$ is an 
exceptional algebraic group of type $\mathrm{E}^{78}_{7,1}$ by \cite[Remark after 2.29]{DMB}.
\end{proof}

\begin{definition}\rm
\begin{enumerate}
\item 
Let $G$ be a group acting on a set $X$. Then $G$ is called {\it Zassenhaus transitive} on $X$ if
\begin{enumerate}
\item $G$ acts doubly transitively on $X$, 
\item $G_{x,y}\ne 1$ but $G_{x,y,z}=1$ for all pairwise distinct $x,y,z\in X$,
\item for all $x\in X$ there is a normal subgroup $U_x$ of $G_x$ acting regularly on $X\setminus \{x\}$ 
and 
\item there is no regular normal subgroup $N$ of $G$.
\end{enumerate}
\item Let $\mathbb{M}=\big( X, (U_x)_{x\in X}\big)$ be a Moufang set with 
little projective group $G^\dagger$. We call $\mathbb{M}$ a {\it Zassenhaus 
Moufang set} if $G^\dagger$ is Zassenhaus transitive on $X$. 
\end{enumerate}
\end{definition}

\noindent Note that if $\mathbb{M}(U,\tau)$ is a Moufang set with Hua subgroup 
$H$, then $\mathbb{M}(U,\tau)$ is Zassenhaus if and only if $H\ne 1$ and 
$H$ acts freely on $U$. If $\mathbb{M}=\mathbb{M}(J)$ for some quadratic Jordan 
division algebra $(J,Q,e)$, this means that if $x,x_1,\ldots,x_n\in J^\#$ with 
$xQ_{x_1}\ldots Q_{x_n}=x$, then $yQ_{x_1}\ldots Q_{x_n}$ for all $y\in J$.

\bigskip
\noindent The following statement is a special case of \cite[Theorem 4.8]{ACP}.
However, unlike this theorem it does not use the classification of quadratic Jordan 
division algebras.
\begin{theorem}\label{abelian}
Let $(J,Q,e)$ be a quadratic Jordan division algebra such that 
$\mathbb{M}(J)$ is a Zassenhaus Moufang set. Then $Q_aQ_b=Q_bQ_a$ for all 
$a,b\in J$.
\end{theorem}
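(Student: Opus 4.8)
The plan is to show that in a Zassenhaus quadratic Jordan division algebra the Hua maps $Q_a$ generate a commutative subgroup of $\mathrm{GL}(J)$, exploiting the fact that the freeness of the Hua action forces every element of the Hua subgroup that fixes even one nonzero point to be the identity. Concretely, I would first recall from the excerpt that $h_a = Q_a$ in this setting and that, by the remark following the definition of Zassenhaus Moufang set, $\mathbb{M}(J)$ Zassenhaus is equivalent to: the Hua subgroup $H = \langle Q_aQ_b \mid a,b\in J^\#\rangle$ acts freely on $J$. The goal $Q_aQ_b = Q_bQ_a$ is equivalent to $Q_aQ_bQ_a^{-1}Q_b^{-1} = \mathrm{id}$, and the left-hand side lies in $H$ (it is a product of $\mu$-map pairs). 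So by freeness it suffices to produce a single nonzero vector on which $Q_aQ_bQ_a^{-1}Q_b^{-1}$ acts trivially.

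The natural candidate vector is $e$ itself, or more usefully something built from the fundamental identities. First I would use (QJ3), $Q_{aQ_b} = Q_bQ_aQ_b$, which is the multiplicative heart of the structure. Reading this as $Q_bQ_aQ_b = Q_{bQ_a Q_b^{-1}\cdot\text{(something)}}$ is awkward; instead the cleaner route is: for $a,b\in J^\#$, consider $c := eQ_aQ_b$ and compute $Q_c$ two ways. Since $Q_1 = \mathrm{id}$ (QJ1) and $Q$ is homogeneous, one has $Q_{eQ_b} = Q_b Q_e Q_b = Q_b^2$, and iterating, $Q_{eQ_aQ_b}$ can be expanded via (QJ3) applied to the inner expression. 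Comparing the resulting word in the $Q$'s with a different bracketing of the same element yields a relation of the form $w_1(Q_a,Q_b)$ and $w_2(Q_a,Q_b)$ agreeing on $e$, where $w_1 w_2^{-1}$ is a commutator-type word in $H$; freeness then upgrades the agreement-at-$e$ to an identity of maps, and a short bootstrap gives full commutativity.

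The step I expect to be the main obstacle is extracting, from the fundamental Jordan identities (QJ1)--(QJ3) together with $h_a = Q_a$, a word in $Q_a, Q_b$ lying in the Hua subgroup $H$ that provably fixes a nonzero element of $J$ — i.e.\ finding the right "test vector" and the right bracketing of (QJ3). The subtlety is that (QJ3) relates $Q_{aQ_b}$ to $Q_bQ_aQ_b$, a word of odd length, so one must pair it with a second instance (e.g.\ replace $b$ by $aQ_b$, or symmetrize in $a,b$) to land inside $H = \langle Q_aQ_b\rangle$ and to arrange that the discrepancy word acts as the identity on a concrete vector. Once such a vector is found, the remainder is routine: conclude $w(Q_a,Q_b) = \mathrm{id}$ by freeness, then manipulate this relation (using invertibility of each $Q_a$ on $J^\#$ and the homogeneity of $Q$) to deduce $Q_aQ_b = Q_bQ_a$ for all $a,b\in J^\#$, and finally extend to all $a,b\in J$ by noting $Q_0 = 0$ makes the zero cases trivial and that $J = J^\# \cup \{0\}$.
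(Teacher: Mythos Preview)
Your overall strategy is exactly the paper's: produce an element of the Hua subgroup $H$ that fixes a nonzero vector, then invoke freeness. What is missing is the concrete test vector and the identity that makes it work; your candidate $c=eQ_aQ_b$ together with (QJ3) only yields the tautology $Q_c=Q_bQ_a^2Q_b$, and there is no ``second bracketing'' of $c$ available, since $eQ_aQ_b=eQ_bQ_a$ is precisely the statement to be proved. The paper's choice is instead the \emph{linearised} vector $eQ_{a,b}=eV_{a,b}$, and the relevant identity is a consequence of (QJ2) (Jacobson's QJ4$'$), namely $V_{a,b}Q_a^{-1}=Q_a^{-1}V_{b,a}=Q_{a^{-1},b}$. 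Applying this to $e$ gives
\[
eQ_{a,b}\,Q_a^{-1}Q_b^{-1}=eQ_{a^{-1},b}\,Q_b^{-1}=eQ_{a^{-1},b^{-1}},
\]
which is visibly symmetric in $a$ and $b$; hence $eQ_{a,b}$ is fixed by $Q_a^{-1}Q_b^{-1}Q_aQ_b\in H$, and freeness finishes the generic case.

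There is a second genuine gap: you must handle the possibility $eQ_{a,b}=0$, which your outline does not address. In characteristic $\ne 2$ this case is quickly disposed of (one computes $a^{-1}Q_{a,b}=2b$, so $Q_{a,b}\ne 0$ on a nonzero vector and one can rerun the argument, or more directly $2Q_{a,b}=0$ forces $Q_{a,b}=0$ and then $eQ_{a+b}=eQ_a+eQ_b=eQ_{a-b}$ gives $Q_{a+b}=Q_{a-b}$). In characteristic~$2$, however, extra Jordan identities (QJ18 and QJ30 in Jacobson's numbering) are needed to show $eQ_aQ_b=eQ_bQ_a$ directly when $eQ_{a,b}=0$; only then does freeness apply. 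So (QJ1)--(QJ3) alone are not enough, contrary to what your proposal suggests.
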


\begin{proof}
We will make use of the identities in \cite[Chapter 1, Section 2 and 3]{J}.
Note that $V_{a,b}$ in our notation corresponds to $V_{b,a}$ in 
Jacobson's notation. As Jacobson we set $V_a:=V_{a,e}=V_{e,a}=Q_{e,a}$ 
for $a\in J$. 

Let $a,b\in J^\#$.
By identity QJ4' on \cite[p. 18]{J} we have 
$$V_{b,a}Q_a= Q_aV_{a,b}= Q_{a,bQ_a}.$$ 
Applying $Q_a^{-1}$ on the left and right and using (QJ3) we get
$$Q_a^{-1}V_{b,a}=V_{a,b}Q_a^{-1}=Q_a^{-1}Q_{a,bQ_a}Q_a^{-1}=
Q_{a^{-1},b}.$$
Thus 
$$eQ_{a,b}Q_a^{-1}=eV_{a,b}Q_a^{-1}=eQ_{a^{-1},b}.$$
Therefore we get
$$eQ_{a,b}Q_a^{-1}Q_b^{-1}=eQ_{a^{-1},b^{-1}}=eQ_{a,b}Q_b^{-1}Q_a^{-1}.$$
If $eQ_{a,b}\ne 0$, this implies $Q_aQ_b =Q_b Q_a$.

Now suppose that $eQ_{a,b}=0$. We will show that $Q_aQ_b =Q_bQ_a$ is also true in this case. 
Since $eQ_{a,b}=0=eQ_{a,-b}$, 
we have $eQ_{a+b}=eQ_a+eQ_b =eQ_{a-b}$, thus $Q_{a+b}=Q_{a-b}$ and 
therefore $2Q_{a,b}=0$. Moreover, we have 
$$a^{-1}Q_{a,b}=aV_{b,a^{-1}}=a^{-1}Q_{a^{-1}}^{-1}V_{b,a^{-1}}=
a^{-1} V_{a^{-1},b} Q_a =bQ_{a^{-1},a^{-1}}Q_a=2bQ_aQ_a^{-1}=2b,$$
therefore we only have to deal with characteristic $2$. 

By identity QJ18 of \cite{J} we have
$aQ_bV_a +aV_bV_{b,a}=aV_{a,b}V_b+aV_aQ_b$. 
Since $aV_b=0$, $aV_a=eQ_{a,a}=2eQ_a=0$ and 
$aV_{a,b}=bQ_{a,a}=2bQ_a=0$ we have $eQ_{aQ_b,a}=aQ_bV_a=0$.
By identity QJ30 of \cite{J} we have
$$0=eQ_{eQ_{a,b}} =eQ_aQ_b+eQ_bQ_a +eQ_{a,aQ_b},$$
thus 
$eQ_aQ_b=eQ_bQ_a$. Thus the claim follows.
\end{proof}

\begin{coro}\label{coro1} If $\mathbb{M}(J)$ is a Zassenhaus Moufang set, 
then $J$ is a commutative field 
or there is a commutative field $\Bbbk$ with 
$\Bbbk^2 \subseteq J\subseteq \Bbbk$.
\end{coro}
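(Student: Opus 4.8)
The plan is to feed the conclusion of Theorem~\ref{abelian} --- that $Q_aQ_b=Q_bQ_a$ for all $a,b\in J$ --- into the structure theory of quadratic Jordan division algebras, distinguishing the characteristics.

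Assume first that the characteristic is different from $2$. Here one can run through the classification \cite[Theorem 15.7]{McZ} recalled above and discard every type but the commutative one: for a skewfield $D$ one has $Q_d\colon x\mapsto dxd$, so commutativity of all the operators $Q_d$ is equivalent to commutativity of $D$; for a Jordan algebra of Clifford type the operators attached to two vectors independent modulo $k\cdot e$ already fail to commute once the quadratic form has dimension $\geq 3$, so the form has dimension $\leq 2$ and $J$ is then a (commutative) quadratic field extension; and an involutory set or an Albert division algebra always contains a non-commuting pair, hence does not occur. Alternatively, without using the classification, in characteristic $\neq 2$ a standard linearisation turns $Q_aQ_b=Q_bQ_a$ into commutativity of the multiplication operators of the underlying linear Jordan algebra, so that $J$ is commutative associative and, being a division algebra, a commutative field. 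In either case the first alternative of the corollary holds.

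Now assume the characteristic is $2$. This case --- where \cite[Theorem 15.7]{McZ} is not available in the quoted form and $J$ has no bilinear product to fall back on --- is where I expect the real difficulty, and the idea is to build the field directly inside $\operatorname{End}(J)$. Recall that $Q_a=h_a$ and $Q_a^{-1}=Q_{a^{-1}}$ lie in the Hua subgroup $H$, that $Q_e=\mathrm{id}$, and that $Q_{c^2}=Q_c^2$ holds in every quadratic Jordan algebra (apply (QJ3) with $b=c$ to $e$ and use (QJ1)). From $eQ_aQ_c^{-1}=a^2Q_c^{-1}=c^2Q_c^{-1}=e$ one reads off that $Q_aQ_c^{-1}\in H$ fixes $e$ whenever $a^2=c^2$; since $\mathbb{M}(J)$ is Zassenhaus, $H$ acts freely on $J\setminus\{0\}$, so $Q_a=Q_c$, i.e.\ $a\mapsto Q_a$ factors through $a\mapsto a^2$. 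The key remaining step is to prove, using the characteristic-$2$ Jordan identities together with the Zassenhaus hypothesis, that the symmetric bilinear map $(a,b)\mapsto eQ_{a,b}$ vanishes; then $Q$ is additive, $a\mapsto a^2$ is an injective additive self-map of $J$, and the set of Jordan squares is an additive subgroup of $J$ stable under all the $Q_a$. Since the commuting operators $Q_a$ ($a\neq 0$) are invertible, they generate a subfield $F$ of $\operatorname{End}(J)$, so $J$ is an $F$-vector space and $a^2=eQ_a\in F\cdot e$ for all $a$. Passing to a field $\Bbbk$ with $\Bbbk^2=F$ (a one-step perfect closure of $F$), the additive injection sending $a$ to the square root of $Q_a$ in $\Bbbk$ identifies $J$ with an $F$-subspace of $\Bbbk$ containing $\Bbbk^2=F$, and it turns $Q_a$ into multiplication by $a^2$ --- which is exactly the assertion $\Bbbk^2\subseteq J\subseteq\Bbbk$ with the prescribed Jordan structure. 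The delicate point is this last identification: producing $\Bbbk$, checking that the image of $J$ really contains $\Bbbk^2$ (equivalently that $F^2$ lies in the set of Jordan squares), and verifying compatibility of the embedding with the full Jordan structure, not merely with squaring.
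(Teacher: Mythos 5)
Your proposal takes a genuinely different route from the paper, and only half of it is actually carried out. The paper's own proof is a one-line reduction: Theorem \ref{abelian} says all Hua maps $Q_a$ commute, hence the Hua subgroup of $\mathbb{M}(J)$ is abelian, and the main theorem of \cite{G10} (the classification of special Moufang sets with abelian Hua subgroup) is precisely the stated dichotomy. You instead try to reprove that classification inside the Jordan algebra. Your characteristic $\neq 2$ argument is essentially sound: linearising $[Q_a,Q_b]=0$ at the unit gives $[V_a,V_b]=0$, i.e.\ commuting multiplication operators of the associated linear Jordan algebra, which by the standard identity expressing associators through commutators of $L$-operators forces $J$ to be commutative associative, hence a commutative field; the alternative run through \cite[Theorem 15.7]{McZ} also works, though the hermitian case is not so much ``excluded'' as collapsed into the commutative alternative, and each non-commuting claim (Clifford type of dimension $\geq 3$, Albert) would still need verification.

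The genuine gap is the characteristic $2$ case, which is exactly where the second alternative $\Bbbk^2\subseteq J\subseteq\Bbbk$ lives and hence where the whole content of the corollary sits. You explicitly defer the two essential steps. First, the vanishing of $eQ_{a,b}$ (equivalently, additivity of $Q$) does not follow formally from commutativity of the $Q_a$: in the proof of Theorem \ref{abelian} the case $eQ_{a,b}\neq 0$ is a live case from which commutation is \emph{derived}, so you would need a new argument using the division and Zassenhaus hypotheses, and none is given. Second, even granting additivity, the assertion that the commuting invertible operators $Q_a$ ``generate a subfield $F$ of $\mathrm{End}(J)$'' is unjustified as stated: they generate a commutative subring whose generators are invertible, but invertibility of arbitrary sums of products in that subring is essentially the structure you are trying to establish, not a freebie; and the final identification of $J$ with a subspace of a one-step perfect closure $\Bbbk$ of $F$ containing $\Bbbk^2$, compatibly with the full Jordan structure, is only announced, as you yourself note. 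So as it stands the proposal proves the easy half and leaves the hard half as a plan. The quickest complete argument is the paper's: feed the commutativity from Theorem \ref{abelian} into the main theorem of \cite{G10}, which is exactly the statement of the corollary.
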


\begin{proof}
This follows by the main theorem of \cite{G10}.
\end{proof}

\begin{coro}\label{coro2} Suppose that $G$ acts Zassenhaus transitively on a set $X$ and that for all $x,y\in X$ pairwise distinct 
\begin{itemize}
\item[\rm (i)] the group $U_x$ is abelian, but not elementary-abelian of exponent $2$ and 
\item[\rm (ii)] $U_x$ is finitely generated as a $Z(G_{x,y})$-module.
\end{itemize}
Then there is a field $F$ with char$F\ne 2$ and a group $\mathrm{PSL}_2(F)\leq 
\mathscr{G}\leq \mathrm{P\Gamma L}_2(F)$ such that $(X,G)$ and $\big( 
\mathrm{PG}_1(F), \mathscr{G}\big)$ 
are isomorphic as permutation groups, where $\mathrm{PG}_1(F)$ denotes the 
projective line over $F$.
\end{coro}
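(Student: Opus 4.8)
The plan is to deduce Corollary \ref{coro2} by combining the structure theory developed in Corollaries \ref{coro0} and \ref{coro1} with Theorem \ref{abelian}. First I would invoke Corollary \ref{coro0}: hypotheses (i) and (ii) here supply exactly what is needed there, namely that $U_x$ is abelian, finitely generated as a $Z(G_{x,y})$-module, and, since $U_x$ is not elementary-abelian of exponent $2$, it contains no involution (being abelian, a single involution would generate an exponent-$2$ subgroup, and by the Zassenhaus action $G_{x,y}$ acts irreducibly so $U_x$ would be forced to be elementary-abelian of exponent $2$ — this needs a short argument using that $Z(G_{x,y})$ acts on $U_x$ and the primitivity of the action). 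Thus $G^\dagger$ is normal in $G$ and there is a commutative field $\Bbbk$ with $\mathrm{char}\,\Bbbk\ne 2$ such that $G^\dagger$ is a classical group of relative $\Bbbk$-rank one or an exceptional group of type $\mathrm{E}^{78}_{7,1}$; the improper case (I) is excluded because $U_x$ has no involution while $\Bbbk^*$ would.

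Next I would use the Zassenhaus hypothesis to pin down which classical group occurs. By Theorem \ref{criterion} and Theorem \ref{main} (applied through Corollary \ref{coro0}), $\mathbb{M}(U_x,\tau)\cong \mathbb{M}(J)$ for a quadratic Jordan division algebra $J$, and since $(X,G^\dagger)$ is Zassenhaus transitive, $\mathbb{M}(J)$ is a Zassenhaus Moufang set. Now Theorem \ref{abelian} gives $Q_aQ_b=Q_bQ_a$ for all $a,b\in J$, and then Corollary \ref{coro1} forces $J$ to be a commutative field $F$ (the alternative $\Bbbk^2\subseteq J\subseteq \Bbbk$ with $J\ne\Bbbk$ is a characteristic-$2$ phenomenon and is ruled out by $\mathrm{char}\,F\ne 2$). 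Hence $G^\dagger\cong \mathrm{PSL}_2(F)$ acting on $\mathrm{PG}_1(F)$, and the first case of Corollary \ref{coro1} also covers the finite-field subcase appearing in Corollary \ref{coro0}, so $F$ is a genuine commutative field of characteristic $\ne 2$ in all cases.

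Finally I would promote this from $G^\dagger$ to $G$. Since $G^\dagger\cong\mathrm{PSL}_2(F)$ is normal in $G$ and $G$ acts faithfully on $X\cong \mathrm{PG}_1(F)$ by (MS1)/(MS2), $G$ embeds into the normaliser of $\mathrm{PSL}_2(F)$ in $\mathrm{Sym}(\mathrm{PG}_1(F))$. That normaliser is $\mathrm{P\Gamma L}_2(F)$ (this is classical — the automorphism group of $\mathrm{PSL}_2(F)$ for $|F|>3$, realised on the projective line, is $\mathrm{P\Gamma L}_2(F)$, and $C_{\mathrm{Sym}(X)}(G^\dagger)=1$ because $G^\dagger$ is already transitive, indeed $2$-transitive). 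Therefore $\mathrm{PSL}_2(F)\cong G^\dagger\trianglelefteq G\leq \mathrm{P\Gamma L}_2(F)$, which is the claimed sandwich, and the permutation isomorphism with $\big(\mathrm{PG}_1(F),\mathscr{G}\big)$ for $\mathscr{G}$ the image of $G$ follows.

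I expect the main obstacle to be the bookkeeping around the no-involution condition and, more substantively, verifying that the improper/finite subcases collapse correctly: one must check that "$U_x$ not elementary-abelian of exponent $2$" together with the Zassenhaus (sharply triply transitive on triples, i.e.\ $H$ acting freely) hypothesis genuinely forces the characteristic-$\ne 2$ commutative-field conclusion rather than one of the exceptional Jordan or Clifford-type possibilities left open by Corollary \ref{coro0}. This is exactly where Theorem \ref{abelian} does the heavy lifting — the commutativity $Q_aQ_b=Q_bQ_a$ is what eliminates the noncommutative skewfield, the Clifford, the involutory-set, and the Albert cases — so the proof hinges on checking that its hypothesis (Zassenhaus Moufang set) really is available here, which it is because freeness of the Hua action is part of the Zassenhaus definition.
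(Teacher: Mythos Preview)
Your overall architecture matches the paper's: invoke Corollary~\ref{coro0}, then use the Zassenhaus hypothesis together with Theorem~\ref{abelian} and Corollary~\ref{coro1} to force $J$ to be a commutative field $F$ of characteristic $\ne 2$, and finally pass from $G^\dagger$ to $G$ via $\mathrm{Aut}\bigl(\mathrm{PSL}_2(F)\bigr)=\mathrm{P\Gamma L}_2(F)$. The last two steps are fine and agree with the paper.

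The gap is in how you dispose of case~(I) of Corollary~\ref{coro0}. Your sentence ``the improper case (I) is excluded because $U_x$ has no involution while $\Bbbk^*$ would'' is backwards: in case~(I) one has $\mathrm{char}\,\Bbbk=2$, so $-1=1$ and $\Bbbk^*$ contains \emph{no} involution. Thus the no-involution condition does not rule out case~(I). Relatedly, your argument that ``not elementary-abelian of exponent~$2$'' implies ``no involution in $U_x$'' leans on irreducibility of $G_{x,y}$ on $U_x$, but the irreducibility result you cite is for the Hua subgroup of a \emph{proper} special Moufang set; in the improper case it is not available, so you cannot use it to verify hypothesis~(ii) of Corollary~\ref{coro0} before you know the Moufang set is proper.

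The paper's fix is short and uses a different part of the Zassenhaus definition: condition~(d) says $G$ has no regular normal subgroup. In case~(I) we have $G^\dagger\cong\mathrm{Aff}(\Bbbk)$, and the translation subgroup $(\Bbbk,+)$ is a regular normal subgroup of $G^\dagger$ which is characteristic (it is exactly the set of fixed-point-free elements together with the identity), hence normal in $G$; this contradicts~(d). So case~(I) is excluded directly by Zassenhaus transitivity, without ever needing to check ``no involution'' beforehand. Once you are in case~(II) the Moufang set is proper and special, whence $U$ is torsion-free or elementary abelian $p$-group; the hypothesis ``not elementary-abelian of exponent~$2$'' then \emph{does} give $\mathrm{char}\ne 2$ and no involutions, and your remaining steps (Theorem~\ref{abelian}, Corollary~\ref{coro1}, and the passage to $\mathrm{P\Gamma L}_2(F)$) go through exactly as written.
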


\begin{proof} Since $G$ is Zassenhaus transitive on $X$, there is no regular 
normal subgroup of $G$, so case (I) of \ref{coro0} can be excluded. By 
\ref{coro1} there is a commutative field $F$ with char$F\ne 2$ such that the corresponding 
Moufang set is isomorphic to the projective line over $F$. The little projective 
group $G^\dagger$ is isomorphic to $\mathrm{PSL}_2(F)$. Thus $G$ is isomorphic 
to a subgroup of $\mathrm{Aut}\big(\mathrm{PSL}_2(F)\big)$ which is $\mathrm{P\Gamma L}_2(F)$ by \cite{D}[IV, \S 6].
\end{proof}

\begin{remark}\rm By \cite{Maeu}, the group $\mathscr{G}$ is contained in $\mathrm{PGL}_2(F)$ if and only $G_{xy}$ is abelian. 
\end{remark}
\section{KT-nearfields}

\noindent
KT-nearfields are the algebraic structures for Moufang sets whose 
little projective group is contained in a sharply $3$-transitive 
groups. At first, we recall some notion about nearfields. For an introduction 
to nearfields see \cite{W}.

\begin{definition}\rm
Let $F$ be a set with two binary operations $+$ and $\cdot$ and $0,1\in F$ two distinct elements. 
Then $(F,+,\cdot,0,1)$ is called a {\it (right) nearfield} if 
\begin{enumerate}
\item $(F,+,0)$ is a group.
\item $(F^*,\cdot,1)$ is a group, where $F^*:=F\setminus \{0\}$.
\item $(a+b)\cdot c = a\cdot c+b\cdot c$ for all $a,b,c\in F$.
\item $a\cdot 0=0$ for all $a\in F$.
\end{enumerate}
\end{definition}

\noindent
It can be seen that the additive group of a nearfield is necessarily abelian, see \cite{W}[I, \S 2, (2.3)].

\begin{definition}\rm
Let $F$ be a nearfield.
\begin{enumerate}
\item $Z(F):=\{a\in F\mid a\cdot b=b\cdot a$ for all $b\in F\}$ is called the {\it centre} of $F$.
\item $\Bbbk(F):=\{ a\in F\mid a\cdot (b+c) =a\cdot b +a\cdot c$ for all $b,c\in F\}$ is called the {\it kernel} of $F$.
\end{enumerate}
\end{definition}

\noindent
One easily sees that $\Bbbk(F)$ is a skewfield containing $Z(F)$ and that $F$ is a left vector space over $\Bbbk(F)$.

\begin{example}\label{dickson}\rm
\begin{enumerate}
\item Let $F$ be a skewfield and $\varphi: F^*\to \mathrm{Aut}(F)$ a map with 
$\varphi( a^{\varphi(b)} b) =\varphi(a)\varphi(b)$ for all $a,b\in F^*$. Such a map is called a {\it coupling}. 
We define a new multiplication $\cdot $ on $F$ by 
$$a\cdot b :=\left\{ \begin{array}{ll} a^{\varphi(b)}b, & \text{if } b\in F^*,\\
0, & \text{if } b=0\end{array}\right.$$ for all $a,b\in F$. Then $F^\varphi:=(F,+,\cdot,0,1)$ is nearfield, called a {\it Dickson nearfield}. We say that 
$F^\varphi$ is {\it coupled to} $F$.
\item A nearfield that is not a Dickson nearfield is called {\it wild}. There are up to isomorphism exactly seven finite wild 
nearfields. Infinite wild nearfields were constructed in \cite{ZG}, \cite{Gru}, \cite{GG} and \cite{Gr}.
\end{enumerate}
\end{example}

\begin{definition}\rm Let $F$ be a nearfield. For $a\in F^*$ and $b\in F$ let $t_{a,b}:F\to F: x\mapsto ax+b$.
Set $\mathcal{T}(F):=\{t_{1,b} \mid b \in F\}$ and 
$\mathrm{Aff}(F):= \{t_{a,b} \mid a\in F^*,b\in F\}$, the {\it affine group} of $F$.
\end{definition}

\noindent
We have
\begin{theorem}
\begin{enumerate}
\item 
$\mathrm{Aff}(F)$ is a subgroup of $\mathrm{Sym}(F)$ that acts sharply $2$-transitively on $F$.
\item $\mathcal{T}(F)$ is a normal subgroup of $\mathrm{Aff}(F)$ that acts regularly on $F$. Moreover, we have 
$\mathcal{T}(F)\cong (F,+)$. \end{enumerate}
\end{theorem}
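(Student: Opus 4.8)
The plan is to verify the two assertions directly from the nearfield axioms; there is no deep content here, only the need to keep track of the one-sided distributive law.

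\emph{Step 1: $\mathrm{Aff}(F)\le\mathrm{Sym}(F)$.} First I would check that each $t_{a,b}$ is a bijection of $F$. It is injective since $(F^*,\cdot)$ is a group and $a\cdot 0=0$, so multiplication by $a\in F^*$ is injective on $F$, and translation by $b$ is injective; it is surjective because multiplication by $a$ and translation by $b$ are both onto. Next, using the distributive law one computes that a composite $t_{a,b}\circ t_{c,d}$ is again of the form $t_{e,f}$ with $e\in F^*$ and $f\in F$, that $\mathrm{id}=t_{1,0}$, and that each $t_{a,b}^{-1}$ has the same form. Hence $\mathrm{Aff}(F)$ is closed under composition and inversion, so it is a subgroup of $\mathrm{Sym}(F)$.

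\emph{Step 2: sharp $2$-transitivity.} Given pairs of distinct points $(x_1,x_2)$ and $(y_1,y_2)$ of $F$, I would produce an element of $\mathrm{Aff}(F)$ carrying $x_i$ to $y_i$ by composing the translation sending $x_1$ to $0$, a multiplication by a suitable nonzero scalar sending the image of $x_2$ to $y_2-y_1$, and the translation by $y_1$; by Step~1 this composite lies in $\mathrm{Aff}(F)$, and by construction it sends $x_1\mapsto y_1$ and $x_2\mapsto y_2$. For uniqueness it suffices to see that an affine map fixing two distinct points is the identity: if $t_{a,b}$ and $t_{c,d}$ agree at $x_1\ne x_2$, then subtracting the two defining equations and applying the distributive law yields a relation that, after cancelling the nonzero element $x_1-x_2$ in $(F^*,\cdot)$, forces $a=c$, and then $b=d$ follows at once. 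This gives sharp $2$-transitivity.

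\emph{Step 3: the translation subgroup.} The elements of $\mathcal T(F)=\{t_{1,b}\mid b\in F\}$ are exactly the maps $x\mapsto x+b$. Since $(F,+)$ is abelian, $t_{1,b}\circ t_{1,d}=t_{1,b+d}$, so $b\mapsto t_{1,b}$ is a bijective group homomorphism from $(F,+)$ onto $\mathcal T(F)$; hence $\mathcal T(F)$ is a subgroup of $\mathrm{Aff}(F)$ and $\mathcal T(F)\cong(F,+)$. It acts regularly on $F$ because for $x,y\in F$ there is a unique $b$, namely $b=y-x$, with $x+b=y$. Finally, conjugating a translation $t_{1,c}$ by an arbitrary $t_{a,b}$ and simplifying with the distributive law again produces a translation, so $\mathcal T(F)\trianglelefteq\mathrm{Aff}(F)$. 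The one point needing care throughout is that $F$ is a \emph{right} nearfield, so in every composition and conjugation the multiplicative factors must be arranged so that the available law $(a+b)c=ac+bc$ can be applied; this placement of products is the only (and purely computational) obstacle.
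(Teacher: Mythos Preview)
The paper does not prove this theorem itself; it simply cites \cite{KW2} for these well-known facts. Your direct verification from the nearfield axioms is exactly the standard argument and is the right plan.

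There is, however, one concrete point where your sketch would not go through as written. With the paper's literal definition $t_{a,b}\colon x\mapsto a\cdot x+b$ and only the \emph{right} distributive law $(u+v)c=uc+vc$, the composite
\[
x\,t_{a,b}\,t_{c,d}=c\cdot(a\cdot x+b)+d
\]
cannot be simplified to affine form: that step would require the \emph{left} distributive law, which a right nearfield need not satisfy. You correctly flag the one-sidedness at the end, but your Step~1 still claims closure ``using the distributive law'', and that is precisely where it breaks. The paper's definition is almost certainly a slip (note that later $G_{\infty,0}$ is described via the right multiplications $\rho_a\colon x\mapsto x\cdot a$); the intended map is $t_{a,b}\colon x\mapsto x\cdot a+b$. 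With that convention every step of your outline works cleanly:
\[
x\,t_{a,b}\,t_{c,d}=(x a+b)c+d=x(ac)+(bc+d)=x\,t_{ac,\,bc+d},
\]
uniqueness in Step~2 comes from $(x_1-x_2)a=(x_1-x_2)c$, and the conjugation in Step~3 gives $t_{a,b}^{-1}\,t_{1,c}\,t_{a,b}=t_{1,\,ca}$. So your strategy is correct once the definition is read as $x\mapsto xa+b$; with $x\mapsto ax+b$ the set $\mathrm{Aff}(F)$ is not closed under composition in a genuine (non-distributive) right nearfield.
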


\noindent
On the converse, we have 
\begin{theorem} Let $X$ be set, $G\leq \mathrm{Sym}(X)$ a group acting sharply $2$-transitively on $X$ and $N$ a normal subgroup 
of $G$ acting regularly on $X$. 
Then there is a nearfield $F$, such that $(G,X)$ and $\big( \mathrm{Aff}(F),F\big)$ are isomorphic as permutation groups. Moreover, 
if $\varphi:G\to \mathrm{Aff}(F)$ is an isomorphism, then $\varphi(N)=\mathcal{T}(F)$.
\end{theorem}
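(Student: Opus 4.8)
\noindent
The plan is to run the classical correspondence between sharply $2$-transitive groups carrying a regular normal subgroup and nearfields, being careful about sidedness so that the multiplication obtained matches the definition of $\mathrm{Aff}(F)$ above. First I would fix a point $0\in X$. Since $N$ acts regularly, $n\mapsto 0^n$ is a bijection $N\to X$; transporting the group law of $N$ along it turns $(X,+,0)$ into a group in which the element $n_b\in N$ determined by $0^{n_b}=b$ acts as the translation $x\mapsto x+b$. Moreover $N\cap G_0=1$ (the only element of $N$ fixing $0$ is the identity), and for every $g\in G$ there is $n\in N$ with $0^n=0^g$, whence $gn^{-1}\in G_0$ and $G=G_0N$. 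Conjugating $N$ by $h\in G_0$ gives $h^{-1}n_bh=n_{b^h}$, which rewrites as $(x+y)^h=x^h+y^h$: every element of $G_0$ acts on $(X,+)$ as an additive automorphism.

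\noindent
By the ``sharp'' hypothesis, $G_0$ acts regularly on $X\setminus\{0\}$. I would fix $1\in X\setminus\{0\}$ and, for $a\in X^*:=X\setminus\{0\}$, let $h_a\in G_0$ be the unique element with $1^{h_a}=a$, so $h_1=\mathrm{id}$. Define $a\cdot b:=a^{h_b}$ for $b\ne 0$ and $a\cdot 0:=0$. From $1^{h_bh_c}=b^{h_c}=b\cdot c$ (for $c\ne 0$) and uniqueness one gets $h_bh_c=h_{b\cdot c}$, hence $\cdot$ is associative, $(X^*,\cdot,1)$ is a group with inverse $a^{-1}=1^{h_a^{-1}}$, and $1\cdot b=b=b\cdot 1$; additivity of $h_c$ gives the distributive law $(a+b)\cdot c=a\cdot c+b\cdot c$; and $a\cdot 0=0$ by definition. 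So $F:=(X,+,\cdot,0,1)$ is a nearfield. Finally, every $g\in G$ decomposes as $g=h_an_b$ with $b:=0^g$ (with the reading $a:=1$, $h_1=\mathrm{id}$, when $g\in N$), and then $x^g=(x\cdot a)+b$ for all $x$; conversely every map $x\mapsto x\cdot a+b$ with $a\in F^*$ is such a $g$, and $x\mapsto x+b$ is $n_b$. Thus $G$ acting on $X$ is exactly $\mathrm{Aff}(F)$ acting on $F$, with $N=\{x\mapsto x+b\mid b\in F\}=\mathcal{T}(F)$; this is the first assertion.

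\noindent
For the supplement I would characterise $\mathcal{T}(F)$ inside $\mathrm{Aff}(F)$ permutationally. If $a\ne 1$, then $t_{a,b}$ fixes $(-b)\cdot(a-1)^{-1}$: indeed $x\cdot(-1)=-x$, since $x\cdot(-1)+x\cdot 1=x\cdot 0=0$, so $x\cdot a+b=x$ reads $x\cdot(a-1)=-b$; on the other hand $t_{1,b}$ is fixed-point-free precisely when $b\ne 0$. Hence the fixed-point-free elements of $\mathrm{Aff}(F)$ are exactly those of $\mathcal{T}(F)\setminus\{1\}$, and by the identification just made the fixed-point-free elements of $G$ are exactly those of $N\setminus\{1\}$. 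Since any isomorphism $\varphi:G\to\mathrm{Aff}(F)$ of permutation groups sends fixed-point-free elements to fixed-point-free elements, $\varphi(N)=\mathcal{T}(F)$.

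\noindent
No step is deep; the work is the bookkeeping of the decomposition $G=G_0N$ and the verification of the nearfield axioms with the right handedness. The one point carrying genuine content is that normality of $N$ in $G$ forces $G_0$ to act by additive maps, and it is precisely this that produces the (one-sided) distributive law — matching its side to the convention used for $\mathrm{Aff}(F)$ being the sole thing that requires attention.
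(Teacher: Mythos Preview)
The paper gives no proof of this statement; it merely cites \cite{KW2}. Your construction is the classical one and the main part is correct: the decomposition $G=G_0N$, the transport of the group law of $N$ to $X$, and the definition $a\cdot b:=a^{h_b}$ yield a right nearfield $F$ with $G$ acting on $X=F$ exactly as $\mathrm{Aff}(F)$ acts on $F$, and under this specific identification $N=\mathcal{T}(F)$.

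There is, however, a genuine gap in your argument for the supplement. You claim $x\cdot(-1)+x\cdot 1=x\cdot 0$ and then rewrite $x\cdot a+b=x$ as $x\cdot(a-1)=-b$; both steps invoke the \emph{left} distributive law $x\cdot(u+v)=x\cdot u+x\cdot v$, which a right nearfield need not satisfy. (The identity $x\cdot(-1)=-x$ is true, but for a different reason: the involution $h_{-1}\in G_0$ fixes the element $x+x\cdot(-1)$, which forces $x+x\cdot(-1)=0$ by sharpness.) Without left distributivity you cannot solve $x\cdot a-x=-b$ explicitly, and for infinite $F$ the injective additive map $x\mapsto x\cdot a-x$ is not obviously surjective; so you have not established that every $t_{a,b}$ with $a\neq 1$ has a fixed point, and hence not that $\mathcal{T}(F)\setminus\{1\}$ exhausts the fixed-point-free elements.

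A clean repair is to show that the regular normal subgroup is unique. Since $(F,+)$ is abelian, $N$ is abelian and $N\leq C_G(N)$; any element of $C_G(N)$ fixing a point fixes every point (because $N$ is transitive and commutes with it), so $C_G(N)$ is semiregular, hence regular, hence $C_G(N)=N$. If $M$ were another regular normal subgroup, then $M\cap N\trianglelefteq G$; by primitivity either $M\cap N=1$, whence $[M,N]=1$ and $M\leq C_G(N)=N$, a contradiction, or $M\cap N$ is transitive, whence $M\cap N=N$ and $M=N$. Any permutation isomorphism preserves ``regular normal'', so $\varphi(N)=\mathcal{T}(F)$.
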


\noindent Proofs of these well-known results can 
for instance be found in \cite{KW2}.

\medskip\noindent
The question whether the sharply $2$-transitive action of $\mathrm{Aff}(F)$ on $F$ can be extended to a sharply $3$-transitive 
action leads to the definition of a KT-nearfield. 

\begin{definition}\rm Let $F$ be a nearfield. An involutory automorphism $\sigma$ 
of the multiplicative group of $F$ is called a {\it KT-automorphism} if 
 $(1+a^\sigma)^\sigma=1-(1+a)^\sigma$ for all $a\in F^*\setminus \{-1\}$. The pair $(F,\sigma)$ is called a {\it KT-nearfield}.
\end{definition}

\noindent
The abbreviation {\it KT} stands for {\it Karzel-Tits}.
\begin{example}\label{dickson2}\rm Let $F$ be a commutative field,  $\varphi:F^*\to \mathrm{Aut}(F)$ a coupling with $\varphi(a)=\varphi(a^{-1})$ 
for all $a\in F^*$. 
Let $a^\sigma:=a^{-1}$ (the inverse with respect to the field multiplication) for all $a\in F^*$. Then $(F^\varphi,\sigma)$ is 
a KT-nearfield. 
\end{example}

\begin{definition}
\rm Let $(F,\sigma)$ be a KT-nearfield. Set $X:=F\dot{\cup} \{\infty\}$. We embed $\mathrm{Aff}(F)$ into $\mathrm{Sym}(X)$ by 
setting $\infty g :=\infty$ for all $g\in \mathrm{Aff}(F)$. Let $\tau \in \mathrm{Sym}(X)$ be defined by
$$x\tau:= \left\{ \begin{array}{rl} 0, & \text{if } x=\infty, \\ \infty, & \text{if } x=0, \\ -x^\sigma, & \text{if } x\in F^*.
\end{array} \right.$$ We set $T_3(F):= \langle \tau, \mathrm{Aff}(F)\rangle\leq \mathrm{Sym}(X)$. 
\end{definition}

\begin{definition}\rm Let $(F,\sigma)$ be a KT-nearfield. For $a\in F^*$ let $q_a:=a^{-\sigma}\cdot a$, the {\it pseudo-square} associated to $a$. 
\end{definition}

\begin{theorem}\label{kt moufang} Let $(F,\sigma)$ be a KT-nearfield.
\begin{enumerate}
\item $T_3(F)$ is a sharply $3$-transitive permutation group on $X$ with $T_3(F)_\infty =\mathrm{Aff}(F)$ and 
$T_3(F)_{0,\infty} \cong F^*$. 
\item $\mathbb{M}(F,\tau)$ is a special Moufang set 
whose little projective group $G^\dagger$ is 
normal in $T_3(F)$. Moreover, for all $a,x\in F$ with $a\ne 0$ we have $xh_a =x\cdot q_a$. 
\end{enumerate}
\end{theorem}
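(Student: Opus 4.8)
The plan is to prove (1) directly, as the classical statement that a KT-nearfield produces a sharply $3$-transitive group, and then to read off (2) from the Moufang-set machinery developed above. A first observation is that $(-1)^\sigma=-1$: since $\sigma$ is an involutory automorphism of $F^*$, the unique $x\in F^*$ with $x^\sigma=-1$ is $x=(-1)^\sigma$, and if this were $\neq-1$ it would lie in $F^*\setminus\{-1\}$, so that the defining identity $(1+a^\sigma)^\sigma=1-(1+a)^\sigma$ evaluated at $a=(-1)^\sigma$ would force $0=1+(-1)\in F^*$, which is impossible. Because $\sigma$ is multiplicative this gives $(-x^\sigma)^\sigma=(-1)^\sigma(x^\sigma)^\sigma=-x$ for all $x\in F^*$, whence $\tau^2=\mathrm{id}_X$; thus $\tau$ is an involution interchanging $0$ and $\infty$, so $\mathbb{M}(F,\tau)$ is a legitimate instance of the construction of the introduction.

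For sharpness of the (clearly $3$-transitive) action of $T_3(F)$ on $X$ — it is transitive because $\tau$ sends $\infty$ to $0$ and $\mathrm{Aff}(F)$ is transitive on $F$, and the stabiliser of $\infty$ contains the $2$-transitive group $\mathrm{Aff}(F)$ — I would establish the double-coset decomposition
$$T_3(F)=\mathrm{Aff}(F)\ \dot{\cup}\ \mathrm{Aff}(F)\,\tau\,\mathrm{Aff}(F).$$
The right-hand side is stable under left and right multiplication by $\mathrm{Aff}(F)$ and contains $1$ and $\tau$, so it is enough to check $\tau g\tau\in\mathrm{Aff}(F)\cup\mathrm{Aff}(F)\tau\mathrm{Aff}(F)$ for $g\in\mathrm{Aff}(F)$; using $\mathrm{Aff}(F)=\mathcal{T}(F)\rtimes\mathrm{Aff}(F)_0$ this reduces to $g=t_{c,0}$, where one computes $\tau t_{c,0}\tau=t_{c^\sigma,0}\in\mathrm{Aff}(F)$, and to $g=t_{1,b}$ with $b\neq0$, where one exhibits explicit $g_1,g_2\in\mathrm{Aff}(F)$ and verifies the permutation identity $\tau t_{1,b}\tau=g_1\tau g_2$; the sole non-formal point there is the rewriting of $(b-x^\sigma)^\sigma$ by means of the KT-identity. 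Granting the decomposition, every element of $\mathrm{Aff}(F)\tau\mathrm{Aff}(F)$ moves $\infty$, so $T_3(F)_\infty=\mathrm{Aff}(F)$; since $\mathrm{Aff}(F)$ is sharply $2$-transitive on $X\setminus\{\infty\}$ this forces $T_3(F)$ to be sharply $3$-transitive, and $T_3(F)_{0,\infty}=\mathrm{Aff}(F)_0\cong F^*$.

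For (2) I would apply Lemma~\ref{2trans} to $G=T_3(F)$ acting $2$-transitively on $X$, with the point $\infty$ and the subgroup $U_\infty:=\mathcal{T}(F)$, which is normal in $G_\infty=\mathrm{Aff}(F)$ and acts regularly on $X\setminus\{\infty\}=F$; this yields a Moufang set whose little projective group is normal in $T_3(F)$. Identifying $U=(F,+)$ with $U_\infty$ through $a\mapsto\alpha_a=t_{1,a}$, the groups $U_\infty$, $U_0=U_\infty^\tau$ and $U_a=U_0^{\alpha_a}$ of the introduction coincide with those of Lemma~\ref{2trans} on taking $g_0=\tau$ and $g_a=\tau\alpha_a$, so the Moufang set obtained is exactly $\mathbb{M}(F,\tau)$ and $G^\dagger\trianglelefteq T_3(F)$. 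It is special because for $a\in F^*$ we have $(-a)\tau=-(-a)^\sigma=-(-1)^\sigma a^\sigma=a^\sigma=-(a\tau)$. For the last assertion I would first prove $\tau=\mu_1$ by evaluating $\mu_1=\alpha_1(\tau\alpha_1\tau)\alpha_1$ on an arbitrary point and replacing $1-(1+x)^\sigma$ by $(1+x^\sigma)^\sigma$ via the KT-identity; hence $h_a=\tau\mu_a=\mu_1\mu_a$ fixes $0$ and $\infty$ and so lies in $T_3(F)_{0,\infty}=\mathrm{Aff}(F)_0$. Substituting $\tau^{-1}=\tau$, $-a\tau^{-1}=a^\sigma$ and $-(-a\tau^{-1})\tau=a$ in the definition of $\mu_a$ gives $h_a=\tau\alpha_a\tau\alpha_{a^\sigma}\tau\alpha_a$, and tracing a point $x\in F$ through these six maps — using one-sided distributivity, $\sigma^2=\mathrm{id}$, and the KT-identity at the single step where a term $(1+(\,\cdot\,)^\sigma)^\sigma$ occurs — produces $xh_a=x\,a^{-\sigma}a=x\,q_a$.

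The only real obstacle is computational: it lies in the two permutation identities $\tau t_{1,b}\tau=g_1\tau g_2$ and $xh_a=x\,q_a$ (together with $\tau=\mu_1$), each of which rests on the KT-identity and, since $F$ need not be commutative, requires careful bookkeeping of the side on which products and $\sigma$ act — the relation $(-1)^\sigma=-1$ and the one available distributive law being used repeatedly. All of this is the classical content of the correspondence between KT-nearfields and sharply $3$-transitive groups (Karzel, Kerby--Wefelscheid), so one may alternatively invoke that correspondence in place of the explicit computations.
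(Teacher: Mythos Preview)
Your proposal is correct and matches the paper's approach closely. For (a) the paper simply cites \cite{KW} --- exactly the Kerby--Wefelscheid correspondence you name at the end --- so your double-coset argument is more explicit than what the paper actually writes. For (b) the paper proceeds just as you outline: Lemma~\ref{2trans} yields the Moufang-set structure and the normality of $G^\dagger$, specialness is checked via $(-1)^\sigma=-1$, and then $xh_a$ is computed directly by tracing $x$ through $\tau\alpha_a\tau\alpha_{a^\sigma}\tau\alpha_a$, invoking the KT-identity once to collapse the expression to $x\cdot q_a$; the only cosmetic difference is that the paper does not isolate the identity $\tau=\mu_1$ as a separate step but goes straight to the $h_a$-computation from the definition $h_a=\tau\mu_a$.
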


\begin{proof}
\begin{enumerate}
\item This follows by \cite{KW}.
\item For every $x\in X$ let $U_x$ be a conjugate of $\mathcal{T}(F)$ contained in $T_3(F)_x$. Then
$\big( X, (U_x)_{x\in X}\big)$ is a Moufang set whose little projective group $G^\dagger$ is normal in $T_3(F)$. 
Since  $\tau$ interchanges $0$ and $\infty$, this Moufang set equals $\mathbb{M}(F,\tau)$. 
For $a\in F^\#$, we have $(-a)\tau =(-1\cdot a)\sigma =(-1)\sigma \cdot 
a\sigma =-a\sigma =-a\tau$, thus $\mathbb{M}(F,\tau)$ is special. 

\bigskip\noindent
We want to prove the formula $xh_a =x\cdot q_a$ for all $x,a\in F$ with $a\ne 0$. We have 
\begin{eqnarray*}
xh_a &=& x\tau\alpha_a\alpha_{-a\tau^{-1}}^\tau \alpha_{-(-a\tau^{-1})\tau} \\
&=& \big( (x\tau +a)\tau^{-1}-a\tau^{-1}\big)\tau +a \\
&=&-\big( a^\sigma-(a-x^\sigma)^\sigma\big)^\sigma+a \\
&=& \big( (1-x^{\sigma}\cdot a^{-1})^\sigma \cdot a^\sigma-a^\sigma \big)^\sigma +a\\
&=& \left( \big(1-(x \cdot a^{-\sigma})^\sigma \big)^\sigma-1\right)^\sigma \cdot 
a^{\sigma^2} +a \\
&=& \big( 1-(1-x\cdot a^{-\sigma})^\sigma-1\big) \cdot a+a\\
&=& (x\cdot a^{-\sigma} -1)^{\sigma^2} \cdot a +a \\
&=& x\cdot a^{-\sigma}\cdot a -a+a\\
&=& x\cdot q_a.
\end{eqnarray*}
\end{enumerate}
\end{proof}

\begin{example}\rm If $F$ is a commutative field and $\varphi$ is a coupling as in Example \ref{dickson}, then we have $\mathrm{PSL}_2(F) \leq T_3(F^\varphi) \leq \mathrm{P\Gamma L}_2(F)$. 
For example, if $F=\mathbb{F}_9$ and $\varphi$ is the unique non-trivial coupling of $F$, then $T_3(F^\varphi) =\mathrm{Mat}_{10}$, the Mathieu group of degree $10$.
\end{example}

\noindent
Recall that for $n\geq 1$ a permutation group is called {\it $n$-sharp} if the 
stabiliser of $n$ distinct points is always trivial

\begin{lemma}\label{1+2=3} Let $G$ be a $2$-sharp permutation group on a set $X$ and 
$A$ a regular subgroup of $G$. Suppose that there is $x\in X$ and $h_1,h_2,h_3 
\in H:= N_G(A)\cap G_x$ such that $a^{h_3} =a^{h_1} a^{h_2}$ for all $a\in A$. 
Then $Z(G_x) \leq H$.
\end{lemma}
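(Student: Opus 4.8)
The plan is to exploit the fact that $G$ is $2$-sharp, so an element of $H$ is determined by its action on a single point of $X\setminus\{x\}$, and then translate the hypothesis $a^{h_3}=a^{h_1}a^{h_2}$ into a functional statement that forces each element of $Z(G_x)$ to normalise $A$. First I would fix a point $y\in X\setminus\{x\}$; since $A$ is regular, identifying $X\setminus\{x\}$ with $A$ via $a\mapsto y^a$ lets me regard the conjugation action of $h\in H$ on $A$ as a bijection of $X\setminus\{x\}$ fixing the identification point, and $2$-sharpness says $h$ is recovered from this bijection together with $h(x)=x$. The key algebraic step is to take an arbitrary $z\in Z(G_x)$ and show $z$ normalises $A$; since $z$ centralises $h_1,h_2,h_3$ and every element of $H$, conjugating the identity $a^{h_3}=a^{h_1}a^{h_2}$ by $z$ and using $h_i^z=h_i$ should give $(a^z)^{h_3}=(a^z)^{h_1}(a^z)^{h_2}$, i.e. the map $a\mapsto a^z$ intertwines the "addition-like" relation coming from $(h_1,h_2,h_3)$ in a way that pins down $a^z$ inside $A$.

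More concretely, the relation $a^{h_3}=a^{h_1}a^{h_2}$ exhibited on $A$ is exactly the statement that the three cosets/conjugates fit together as translations in a nearfield-type structure, so the subgroup of $\mathrm{Sym}(X\setminus\{x\})$ generated by $A^{h_1},A^{h_2},A^{h_3}$ interacts with $A$ rigidly. I would argue that for $z\in Z(G_x)$ the element $z$ commutes with $A^{h_i}=A$ (using that $z$ commutes with $h_i$ and — if needed — with $A$ itself via some normality or the structure of $G_x$); the crucial leverage is that $z$ normalising $A$ is equivalent to $z$ permuting the point-stabilisers inside $G_x$ correctly, and the functional equation $a\mapsto a^{h_1}a^{h_2}=a^{h_3}$ gives enough constraints — via $2$-sharpness, evaluating at one point — to conclude $A^z=A$, hence $z\in N_G(A)\cap G_x=H$.

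The main obstacle I anticipate is establishing that $z\in Z(G_x)$ actually commutes with the translations in $A$ (not merely with the Hua-type maps $h_i$), since a priori $A\cap Z(G_x)$ could be small; here one must use the specific interplay of the $h_i$'s: because $a^{h_3}=a^{h_1}a^{h_2}$ holds for all $a\in A$, the conjugates $A^{h_i}$ together "see" all of $A$, and $z$ fixing each $h_i$ while lying in $Z(G_x)$ forces $z$ to respect this decomposition. I would handle this by applying the hypothesis to $a^z$ in place of $a$, noting $(a^z)^{h_i}=(a^{h_i})^z$, to get a compatibility that, combined with $2$-sharpness (an element of $G_x$ fixing one further point is trivial, so a bijection of $A$ respecting all these relations and fixing one element equals the identity), pins down $a^z\in A$. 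Once $A^z\subseteq A$ for all $z\in Z(G_x)$, the reverse inclusion follows by applying the same to $z^{-1}$, giving $A^z=A$ and hence $Z(G_x)\le H$, as desired.
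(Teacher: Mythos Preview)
Your proposal correctly isolates the crux --- showing that an arbitrary $z\in Z(G_x)$ normalises $A$ --- and you rightly note that $z$ commutes with each $h_i$, so conjugating the hypothesis by $z$ yields $(a^z)^{h_3}=(a^z)^{h_1}(a^z)^{h_2}$. But this is where the argument stalls: that identity is a statement about how $h_1,h_2,h_3$ conjugate the element $a^z$ inside the ambient group $G$, not an instance of the hypothesis, which applies only to elements of $A$. Your proposed remedy, ``applying the hypothesis to $a^z$ in place of $a$'', is precisely the circularity you need to avoid, and the vague appeal to ``compatibility'' plus $2$-sharpness does not supply a mechanism that forces $a^z\in A$. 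Nothing in your outline produces a second fixed point for some nontrivial element of $G_x$, which is the only way $2$-sharpness can be invoked.

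The paper's proof supplies exactly the missing device. Identify $X$ with $(A,+)$ via the regular action (so $x=0$ and $a\in A$ acts by $\alpha_a:b\mapsto b+a$); the hypothesis becomes the additive relation $ah_3=ah_1+ah_2$. For $t\in Z(G_0)$ one introduces the \emph{defect} $g_a:=\alpha_a^{\,t}\alpha_{-at}\in G_0$, which is trivial for all $a$ precisely when $t$ normalises $A$. Expanding $\alpha_{ah_3}^{\,t}=\alpha_{ah_1}^{\,t}\alpha_{ah_2}^{\,t}$ via $\alpha_b^{\,t}=g_b\alpha_{bt}$ and comparing with $g_{ah_3}\alpha_{ath_3}=g_{ah_3}\alpha_{ath_1+ath_2}$ (using that $t$ commutes with $h_i$), one evaluates both sides at $0$ to obtain $ath_1+ath_2=(ath_1)g_{ah_2}+ath_2$, hence $g_{ah_2}$ fixes $0$ and $ath_1\neq 0$. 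Now $2$-sharpness gives $g_{ah_2}=1$, and replacing $a$ by $ah_2^{-1}$ yields $g_a=1$ for all $a$. This concrete ``measure the defect and find a second fixed point'' step is what your outline lacks.
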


\begin{proof} 
We write the group $A=(A,+,0)$ additively, identify the set $X$ with $A$, the element 
$x\in X$ with $0\in A$ and the group $A$ with the subgroup $\{\alpha_a\mid a\in A\}$ 
of $\mathrm{Sym}(A)$, where $b\alpha_a = b+a$ for all $a,b\in A$. Then 
we have $\alpha_{ah}=\alpha_a^h$ for all $a\in A$ and all $h\in H$. 
Thus $ah_3 = 0\alpha_{ah_3} =0\alpha_a^{h_3} =0\alpha_a^{h_1}\alpha_a^{h_2} =
0\alpha_{ah_1}\alpha_{ah_2}=ah_1+ah_2$ for all $a\in A$. 

\medskip\noindent
Let $t\in Z(G_0)$ and $0\ne a\in A$. Then $g_a:=\alpha_a^t\alpha_{-at}\in G_0$.
We have 
\begin{eqnarray*} g_{ah_3} \alpha_{ath_1+ath_2} 
 &=& g_{ah_3} \alpha_{ath_3} \\
&=& g_{ah_3} \alpha_{ah_3t}\\ &=& \alpha_{ah_3}^t \\
&=& \alpha_{ah_1+ah_2}^t \\
&=& \alpha_{ah_1}^t \alpha_{ah_2}^t \\
&=& g_{ah_1} \alpha_{ah_1t} g_{ah_2} \alpha_{ah_2t} \\
&=& g_{ah_1} \alpha_{ath_1} g_{ah_2}\alpha_{ath_2}.
\end{eqnarray*}
Thus 
$$ath_1 +ath_2 = 0g_{ah_3} \alpha_{ath_1+ath_2} =0g_{ah_1} \alpha_{ath_1} g_{ah_2}\alpha_{ath_2} = ath_1g_{ah_2} +ath_2$$ and hence 
$ath_1 = ath_1 g_{ah_2}$. Since $G$ is $2$-sharp, this means $g_{ah_2}=1$. 
Replacing $a$ by $ah_2^{-1}$ yields $g_a=1$. Therefore we have 
$\alpha_a^t =\alpha_{at}$ for all $a\in A$, thus $t\in H$.
\end{proof}

\begin{theorem} Let $\mathbb{M}(U,\tau)$ be a 
special Moufang set whose little projective group $G^\dagger$ is contained 
in a sharply $3$-transitive subgroup $G$ of $\mathrm{Sym}(X)$, 
where $X:=U\dot{\cup} \{\infty\}$. Then $G^\dagger \trianglelefteq G$ and 
there is a KT-nearfield $(F,\sigma)$ such that 
$(X,G)$ and $\big(F\dot{\cup} \{\infty\}, T_3(F)\big)$ are isomorphic permutation groups.
\end{theorem}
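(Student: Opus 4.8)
The plan is to reduce to the proper case, show that the root group $U_\infty$ is normal in the full point stabiliser $G_\infty$ (and not merely in $G^\dagger_\infty$), and then recover a nearfield on $U$ together with a KT-automorphism read off from $\tau$. If $\mathbb{M}(U,\tau)$ is improper it is, as recalled in the introduction, the projective line over $\mathbb{F}_2$ or $\mathbb{F}_3$, so $|X|\in\{3,4\}$ and $G=\mathrm{Sym}(X)$ is $T_3(\mathbb{F}_2)=S_3$ or $T_3(\mathbb{F}_3)=\mathrm{PGL}_2(\mathbb{F}_3)=S_4$ with $\sigma=\mathrm{id}$, and then $G^\dagger\trianglelefteq G$ is clear. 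So assume $\mathbb{M}(U,\tau)$ proper, hence the Hua subgroup $H=G^\dagger_{0,\infty}$ is nontrivial, and take $U$ abelian as elsewhere in the paper. Since $G$ is sharply $3$-transitive, $G_\infty$ acts sharply $2$-transitively on $U$ — in particular it is $2$-sharp — and $G_{\infty,0}$ acts regularly on $U^\#$; moreover $U_\infty\le G_\infty$ is a regular subgroup with $U_\infty\trianglelefteq G^\dagger_\infty$, and $H\le N_{G_\infty}(U_\infty)\cap G_{\infty,0}$. Because $G_\infty=U_\infty G_{\infty,0}$ and $U_\infty\le N_{G_\infty}(U_\infty)$, proving $U_\infty\trianglelefteq G_\infty$ amounts to proving $G_{\infty,0}\le N_{G_\infty}(U_\infty)$, and this is the crux of the theorem.

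I would attack it via Lemma \ref{1+2=3}, applied to the $2$-sharp group $G_\infty$ on $U$ with regular subgroup $A:=U_\infty$ and base point $0$: it suffices to exhibit $h_1,h_2,h_3\in N_{G_\infty}(U_\infty)\cap G_{\infty,0}$ with $a^{h_3}=a^{h_1}a^{h_2}$ for all $a\in U_\infty$, for then the lemma gives $Z(G_{\infty,0})\le N_{G_\infty}(U_\infty)$. Since $H$ already lies in $N_{G_\infty}(U_\infty)\cap G_{\infty,0}$ and acts on $(U,+)$ by automorphisms, this relation is a pointwise additive identity $h_3=h_1+h_2$ between three Hua-type operators, which I would extract from the additive identities for special Moufang sets (Lemmas \ref{ff}, \ref{f}, \ref{squares}), starting from $h_{e+a}=\mathrm{id}_U+h_a+h_{e,a}$ together with $e\,h_{e,a}=2a$ and $2a=(e+a)^2-a^2-e^2$. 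The main obstacle is then to pass from "$Z(G_{\infty,0})$ normalises $U_\infty$" to "$G_{\infty,0}$ normalises $U_\infty$": for this I would run the lemma at every base point $x\in U$ — legitimate because $U_\infty$ is transitive, so a valid triple at $0$ conjugates to one at $x$ — obtaining that $Z((G_\infty)_x)=Z(G_{\infty,0})^{\alpha_x}$ normalises $U_\infty$ for all $x$, and then combine this with $H\le N_{G_\infty}(U_\infty)$ and the regularity of $G_{\infty,0}$ on $U^\#$ to force $N_{G_\infty}(U_\infty)=G_\infty$. Once $U_\infty\trianglelefteq G_\infty$, Lemma \ref{2trans} gives $G^\dagger\trianglelefteq G$.

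With $U_\infty\trianglelefteq G_\infty$ in hand I would reconstruct the KT-nearfield by transporting group structures to $U$. Identify $U^\#$ with $G_{\infty,0}$ by $a\mapsto\rho_a$, the unique element of $G_{\infty,0}$ with $e\rho_a=a$, and put $b\cdot a:=b\rho_a$ for $b\in U$, $a\in U^\#$, and $b\cdot 0:=0$; identify $U$ with $U_\infty$ by $b\mapsto\alpha_b$ with $0\alpha_b=b$, and put $a+b:=a\alpha_b$. Then $(U^\#,\cdot,e)$ and $(U,+,0)$ are groups, and the identity $\alpha_b^{\rho_c}=\alpha_{b\cdot c}$ — which holds precisely because $\rho_c$ normalises $U_\infty$ and fixes $0$ — gives the distributive law $(a+b)\cdot c=a\cdot c+b\cdot c$; since $(U,+)$ is already a group, $F:=(U,+,\cdot,0,e)$ is a right nearfield, whose additive group is abelian by \cite{W}, with $G_\infty\cong\mathrm{Aff}(F)$, $U_\infty\cong\mathcal{T}(F)$ and $G_{\infty,0}\cong F^*$. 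Finally $\tau=\mu_e$ interchanges $0$ and $\infty$; putting $a^\sigma:=-(a\tau)$ for $a\in F^*$, specialness of $\mathbb{M}(U,\tau)$ makes $\sigma$ well defined with $(-a)^\sigma=-a^\sigma$, and a computation parallel to that in the proof of Theorem \ref{kt moufang} — using the defining formula for $\mu_e$ and Lemmas \ref{f} and \ref{Lemma 1} — shows that $\sigma$ is an involutory automorphism of $(F^*,\cdot)$ satisfying $(1+a^\sigma)^\sigma=1-(1+a)^\sigma$, i.e. $(F,\sigma)$ is a KT-nearfield.

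It remains to identify the permutation groups. Under these identifications $\tau$ corresponds to the involution of $T_3(F)$ attached to $\sigma$; as $G$ is $2$-transitive, $G_\infty$ is maximal in $G$ and $\tau\notin G_\infty$, so $G=\langle G_\infty,\tau\rangle$, which matches $T_3(F)=\langle\mathrm{Aff}(F),\tau\rangle$, and hence the isomorphism $G_\infty\cong\mathrm{Aff}(F)$ extends to a permutation isomorphism $(X,G)\cong(F\dot{\cup}\{\infty\},T_3(F))$ — equivalently, one may at this point invoke the classical correspondence between sharply $3$-transitive groups whose point stabilisers contain a regular normal subgroup and KT-nearfields (\cite{KW}, \cite{KW2}).
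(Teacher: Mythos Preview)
Your outline is shaped correctly --- the crux really is $U_\infty\trianglelefteq G_\infty$, and once that is known the KT-nearfield comes from \cite{KW} exactly as you say --- but your argument for this normality has two genuine gaps.

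First, Lemma~\ref{1+2=3} requires $h_1,h_2,h_3\in H=N_{G_\infty}(U_\infty)\cap G_{\infty,0}$, i.e.\ three \emph{invertible} elements of the Hua subgroup satisfying $h_3=h_1+h_2$ pointwise on $U$. The identity $h_{e+a}=\mathrm{id}_U+h_a+h_{e,a}$ you suggest has three summands on the right, and $h_{e,a}$ is in general neither invertible nor an element of $H$, so it cannot play the role of one of the $h_i$. The paper manufactures a valid triple only inside the prime field: since $h_{r\cdot e}=r^2\cdot\mathrm{id}_U$ for $r\in\Bbbk^*$, a Pythagorean triple $\ell^2+m^2=n^2$ in $\Bbbk$ (available when $\mathrm{char}\,U=0$ or $\mathrm{char}\,U>5$) gives $h_{\ell e}+h_{m e}=h_{n e}$. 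This already forces the characteristics $2$ and $3$ to be treated by separate direct arguments (using the sets of involutions, resp.\ order-$3$ elements, in the sharply $2$-transitive group $G_\infty$ via \cite{K68}), and the case $\mathrm{char}\,U\equiv 1\pmod 4$ is dispatched by observing that $h_{r\cdot e}$ with $r^2=-1$ is itself an involution in $H$.

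Second, and more seriously, your passage from $Z(G_{\infty,0})\le N_{G_\infty}(U_\infty)$ to $G_{\infty,0}\le N_{G_\infty}(U_\infty)$ does not work. Running the lemma at another base point $x$ produces $Z(G_{\infty,x})=Z(G_{\infty,0})^{\alpha_x}$, but $\alpha_x\in U_\infty\le N_{G_\infty}(U_\infty)$ already, so this conjugate lies in the normaliser automatically and carries no new information; the subgroup generated by all these centres together with $H$ and $U_\infty$ can still be proper in $G_\infty$. The paper takes a different route: from Lemma~\ref{1+2=3} it only extracts that the \emph{unique} involution $t\in G_{0,\infty}$ (hence $t\in Z(G_{0,\infty})$) normalises $U_\infty$, and then invokes Kerby's structural results on sharply $2$-transitive groups \cite{K68}: once $t$ normalises $U_\infty$ it acts as $-\mathrm{id}$, whence $U_\infty=t\cdot(J\cap G_\infty)$ with $J$ the set of involutions of $G_\infty$, and criterion (3.7) of \cite{K68} then yields $U_\infty=(J\cap G_\infty)^2\trianglelefteq G_\infty$. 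This step is not elementary and is not replaced by the base-point variation you sketch.
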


\begin{proof}
By \cite[Corollary 6.4(3)]{DST} the group $U$ is abelian.

\medskip\noindent
We claim that $U_\infty$ is normal in $G_\infty$. Note that $G_\infty$ acts 
sharply $2$-transitively on $U$. 
Let $J$ be the set of involutions of $G_\infty$ and $\overline{J}:=J\cup \{1\}$. 
If char$U=2$, then $\overline{J}\cap G_\infty$ 
acts regularly on $U$ by \cite[(3.3)]{K68}. 
Since $U_\infty \subseteq \overline{J}\cap G_\infty$ is also regularly 
on $U$, we get that 
$U_\infty =\overline{J}$ is normal in $G_\infty$.

\medskip\noindent
Now suppose that char$U=3$. Set $C:=\{x\in G\mid 
o(x)=3\}$ and $\overline{C}:=C\cup \{1\}$. 
By \cite[(2.1)]{K68} is a conjugacy class of $G$. Therefore every 
element of $C$ has exactly one fixed point. Thus $C\cap G_\infty$ is 
a conjugacy class of $G_\infty$. By \cite[(3.1)(b)]{K68} the set 
$\overline{C}\cap G_\infty$ acts regularly on $U$. Since $U_\infty \subseteq 
\overline{C} \cap G_\infty$ also acts regularly on $U$, we have $U_\infty=
\overline{C} \cap G_\infty$ and thus $U_\infty \trianglelefteq G_\infty$.

\medskip\noindent
Now suppose that char$U \ne 2,3$ and that 
there is an involution $t\in G_{0,\infty}$ normalising $U_\infty$. 
Then $t$ acts freely as an automorphism of $U$. Thus $at =-a$ for all $a\in U$. 
Now $U$ is uniquely $2$-divisible, hence we have $a =\big( \frac{1}{2}\cdot a^{-1})^t
+\frac{1}{2}\cdot a$ and hence $\alpha_a =\alpha_{-\frac{1}{2}\cdot a}^t 
\alpha_{\frac{1}{2}\cdot a} =t t^{\alpha_{\frac{1}{2}\cdot a}} \in t \overline{J}$. 
Since $t(J\cap G_\infty)$ acts regularly on $U$ by \cite[(3.1)(c)]{K68}, thus $t(J\cap G_\infty)=U_\infty$. 
Hence condition (v) and (viii) of \cite[(3.7)]{K68} apply, thus 
$U_\infty=(J\cap G_\infty)^2$ is normal in $G_\infty$ by condition (i) of the 
same theorem. Therefore we show that there is an involution $t\in G_{0,\infty}$ normalising $U_\infty$.

\medskip\noindent 
Let $\Bbbk$ be the prime field $\mathbb{M}(U,\tau)$ and 
let $e\in U^\#$ with $\mu_e=\tau$.. For alle $r\in \Bbbk^*$ we have 
$h_{r\cdot e} =r^2 \cdot \mathrm{id}_U$ by \cite{DS}[Proposition 4.6(6)]. 
Suppose that 
char$U\equiv 1$ mod $4$. Then there is an element $r\in \Bbbk^*$ with $r^2=-1$. 
Thus $h_{r\cdot e}=-\mathrm{id}$ is an involution and we are done.

\medskip \noindent
Now suppose char$U=0$ or char$U>5$. 
Since $|(\Bbbk^*)^2| \geq 3$, there are $u,v\in \Bbbk^*$ 
such that $u^2\ne \pm v^2$. Set $\ell:=u^2 -v^2$, $m:=2uv$ and $n:=u^2 +v^2$.  
For all $x\in U$ we have 
$xh_{\ell\cdot e} +xh_{m\cdot e} =\ell^2 \cdot x +m^2 \cdot x 
=(u^2 -v^2)^2 \cdot x+4u^2 v^2 \cdot x = (u^2+v^2)^2 \cdot x = 
xh_{n\cdot e}$. By \cite{K68} there is a unique involution $t$ in $G_{0,\infty}$, 
and by \ref{1+2=3} we have $t\in N_{G_\infty}(U_\infty)$. Thus $U_\infty \trianglelefteq G_\infty$ in all cases.  

\medskip\noindent 
Therefore $G$ is a sharply $3$-transitive permutation group such that 
every point stabilizer has an abelian normal subgroup acting regularly on 
the other points. Thus there is a KT-nearfield $(F,\sigma)$ such that 
$G\cong T_3(F)$ by \cite{KW}.
Since $G^\dagger$ is generated by the root 
groups, we have $G^\dagger\trianglelefteq G$. 
\end{proof}

\begin{definition}\rm Let $(F,\sigma)$ be a KT-nearfield with 
kernel $\Bbbk$. 
We set $\Bbbk_\sigma:=\Bbbk \cap (\Bbbk^*)^\sigma \cup\{0\}$.
\end{definition}

\begin{theorem}\label{ksigma} Let $(F,\sigma)$ be KT-nearfield with kernel $\Bbbk$. 
Then $\Bbbk_\sigma$ is a 
commutative subfield of $\Bbbk$ such that for all $x\in \Bbbk_\sigma$ we have 
$x^2 \in Z(F)$ and $x^\sigma=x^{-1}$ if $x\ne 0$. If 
char$F\ne 2$, then $\Bbbk=\Bbbk_\sigma$. 
\end{theorem}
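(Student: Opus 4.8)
The plan is to understand the set $\Bbbk_\sigma = (\Bbbk \cap (\Bbbk^*)^\sigma) \cup \{0\}$ by exploiting the KT-identity $(1+a^\sigma)^\sigma = 1 - (1+a)^\sigma$ together with the fact that $\sigma$ is an involutory automorphism of $(F^*,\cdot)$. First I would record the elementary facts: $1 \in \Bbbk_\sigma$ (as $1^\sigma = 1$), and if $x \in \Bbbk_\sigma \setminus \{0\}$, say $x = y^\sigma$ with $x, y \in \Bbbk^*$, then applying $\sigma$ gives $x^\sigma = y \in \Bbbk^*$, so $x^\sigma \in \Bbbk_\sigma$ as well; in particular $\Bbbk_\sigma$ is closed under $\sigma$. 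The key structural claim is that for $x \in \Bbbk_\sigma^* := \Bbbk_\sigma \setminus \{0\}$ one has $x^\sigma = x^{-1}$. To see this I would feed $a$ and suitable related elements into the KT-identity. A cleaner route: since $\sigma$ is a group automorphism of $F^*$ of order $2$, the map $x \mapsto x^\sigma x$ lands in the fixed field of $\sigma$ on $F^*$; the condition $x \in (\Bbbk^*)^\sigma$ should force, via the additivity available in the kernel, that the Hua/pseudo-square data collapses. Concretely, recall from Theorem \ref{kt moufang} that $x h_a = x \cdot q_a$ with $q_a = a^{-\sigma} a$; for $a \in \Bbbk_\sigma^*$ the map $h_a$ must be $\Bbbk$-linear (being a product of $\mu$-maps in the Hua subgroup, and $a \in \Bbbk$), and one checks $h_a$ commutes appropriately so that $q_a \in Z(F)$; combined with $a^\sigma \in \Bbbk$ this pins down $a^\sigma = a^{-1}$ and $a^2 = q_a^{-1}\cdot(\text{unit}) \in Z(F)$.

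Next I would prove that $\Bbbk_\sigma$ is closed under the field operations. Closure under multiplication: if $x = y^\sigma$ and $x' = y'^\sigma$ with all four in $\Bbbk^*$, then since $x^\sigma = x^{-1}$ and similarly for $x'$, and since $\Bbbk$ is a skewfield containing these elements, $x x' \in \Bbbk^*$; moreover $(x x')^\sigma = x^\sigma x'^\sigma = x^{-1} x'^{-1} = (x x')^{-1}$ — wait, this needs $x$ and $x'$ to commute, so first I would establish commutativity: since $x^2, x'^2 \in Z(F)$, the subring they generate inside the skewfield $\Bbbk$ is commutative, hence $x, x'$ commute (elements of a skewfield whose squares are central, together, generate a commutative subfield — this is the standard argument that a skewfield in which every element is algebraic of degree $\le 2$ over its centre is commutative, or one argues directly with $x x' + x' x$ central). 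Then $(xx')^\sigma = (xx')^{-1}$, so $xx' \in (\Bbbk^*)^\sigma$, giving multiplicative closure. Closure under addition is the genuinely KT-specific part and is where I expect to use $(1+a^\sigma)^\sigma = 1-(1+a)^\sigma$: putting $a = x/x'$ (legitimate since $x' \ne 0$ and we're in a skewfield) and using $a^\sigma = a^{-1}$, the identity reads $(1 + (x'/x))^\sigma = 1 - (1 + x/x')^\sigma$. Multiplying through by appropriate elements of $\Bbbk_\sigma$ (which we now know are central-square and $\sigma$-inverting) should exhibit $x + x'$, or rather $(x+x')$ suitably scaled, as lying in $(\Bbbk^*)^\sigma$. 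I'd also need $-x \in \Bbbk_\sigma$ when $x \in \Bbbk_\sigma$: from $x^\sigma = x^{-1}$ and $(-1)^\sigma = -1$ (an involution fixes $-1$ as it's the unique element of order $2$ in $F^*$, assuming char $F \ne 2$; in characteristic $2$ there is nothing to prove for signs) we get $(-x)^\sigma = -x^{-1} = (-x)^{-1}$, so $-x \in (\Bbbk^*)^\sigma$.

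Finally, the last assertion: if $\operatorname{char} F \ne 2$ then $\Bbbk = \Bbbk_\sigma$. Here I would use that in characteristic $\ne 2$ one expects $\sigma$ to be forced to be the inversion map on all of $\Bbbk^*$. One clean way: show first that $\Bbbk$ is commutative (perhaps this is already known, or follows because the little projective group sits in a sharply $3$-transitive group so the Moufang set has a strong structure — in any case, once $\Bbbk_\sigma = \Bbbk$ is established commutativity follows from the displayed properties, so I should aim to prove $\Bbbk = \Bbbk_\sigma$ directly). Since $\Bbbk_\sigma$ is a subfield of $\Bbbk$ and, by the main results of the paper in the finite-dimensional case, the whole structure is very restricted, the inclusion $\Bbbk_\sigma \subseteq \Bbbk$ should be forced to be an equality by a dimension or generation argument: every element of $\Bbbk$ should be expressible via elements of $\Bbbk_\sigma$ using that $\Bbbk$ is generated by squares (Lemma \ref{squares}, since $\operatorname{char} \ne 2$), and squares $a^2 = e h_a$ are essentially pseudo-squares, which lie in $\Bbbk_\sigma$ by the first part. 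I expect the main obstacle to be the additive closure step — making the KT-identity actually produce $x + x'$ as a $\sigma$-value requires choosing the substitution carefully and handling the excluded value $a = -1$ separately — and, secondarily, cleanly deriving $x^\sigma = x^{-1}$ for $x \in \Bbbk_\sigma^*$, which I'd want to get either from the pseudo-square formula $xh_a = x q_a$ plus the $\Bbbk$-linearity of Hua maps with $\Bbbk$-argument, or directly from the KT-identity specialized to kernel elements.
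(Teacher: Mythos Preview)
The paper does not prove this theorem at all: its proof consists of the single sentence ``This follows by the main theorem of \cite{Kerby}.'' So there is nothing to compare your argument to on the paper's side; the result is imported wholesale from Kerby's structure theorem for KT-nearfields.

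Your attempt to supply an independent proof has a genuine gap, and it is in the final step. To show $\Bbbk=\Bbbk_\sigma$ when $\operatorname{char}F\ne 2$ you invoke ``the main results of the paper in the finite-dimensional case'' and Lemma~\ref{squares}. This is circular: Theorem~\ref{ksigma} is used as an input in the proof of the paper's Theorem~2 on KT-nearfields (to get that $\Bbbk_\sigma$ is infinite and that $q_a\in Z(F)$ for $a\in\Bbbk_\sigma$), so you cannot cite those results here. Moreover, Theorem~\ref{ksigma} carries no finite-dimensionality hypothesis, so any appeal to ``the finite-dimensional case'' is simply inapplicable. Your use of Lemma~\ref{squares} is also off: that lemma concerns the Moufang-set power $a^2=eh_a$, which in the KT-nearfield translates to the pseudo-square $q_a=a^{-\sigma}a$, not to the nearfield product $a\cdot a$, so it does not directly show that elements of $\Bbbk$ lie in $(\Bbbk^*)^\sigma$.

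The earlier steps are also only sketched. You correctly flag that additive closure of $\Bbbk_\sigma$ is the heart of the matter and that the KT-identity must be used there, but you do not carry it out; and your route to $x^\sigma=x^{-1}$ via ``$h_a$ is $\Bbbk$-linear for $a\in\Bbbk_\sigma$'' is not justified (why should Hua maps be $\Bbbk$-linear rather than merely $H$-equivariant?). If you want a self-contained proof, you should consult Kerby's original argument; within the logic of this paper, the statement is a black box.
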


\begin{proof}
This follows by the main theorem of \cite{Kerby}.
\end{proof}

We are now ready to prove Theorem $2$.

\begin{theorem} Let $(F,\sigma)$ be a KT-nearfield, $\Bbbk$ its kernel 
and $R$ its set of pseudo-squares. 
Suppose that 
\begin{itemize}
\item[\rm (i)] char$F\ne 2$ and $\dim_\Bbbk F<\infty$ or
\item[\rm (ii)] char$F=2$, $\dim_{\Bbbk_\sigma} F<\infty$ and $F$ is generated by $R$ as a $\Bbbk_\sigma$-vector space and by $C_{F^*}(R)$ as a nearring.  
\end{itemize}
Then 
$F$ is a Dickson nearfield as in Example \ref{dickson2}. 
\end{theorem}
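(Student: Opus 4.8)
The plan is to realise $F$ through the special Moufang set $\mathbb M(F,\tau)$ furnished by Theorem \ref{kt moufang}, to apply Theorem \ref{main} to it so as to obtain a quadratic Jordan division algebra $J$, then to pin down $J$ as (essentially) a commutative field by Corollary \ref{coro1}, and finally to read off the Dickson structure from the embedding of $T_3(F)$ into $\mathrm{P\Gamma L}_2$ of that field. By Theorem \ref{kt moufang}, $U:=(F,+)$ is abelian, $\tau=\mu_1$, $G^\dagger\trianglelefteq T_3(F)$, and $h_a$ acts as right multiplication by the pseudo-square $q_a$; in particular the squares $a^2=eh_a$ occurring in Theorem \ref{main} are exactly the pseudo-squares in $R$. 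If $\mathbb M(F,\tau)$ is improper it is the projective line over $\mathbb F_2$ or $\mathbb F_3$ by \cite{S}, both of which have the shape of Example \ref{dickson2} with trivial coupling, so I may assume it is proper; then, since $T_3(F)$ is sharply $3$-transitive and $G^\dagger\trianglelefteq T_3(F)$, the Hua subgroup lies in $T_3(F)_{0,\infty}\cong F^*$ and acts freely, so $\mathbb M(F,\tau)$ is Zassenhaus.

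Next I set $\Bbbk_0:=\Bbbk$ if $\mathrm{char}\,F\ne 2$ and $\Bbbk_0:=\Bbbk_\sigma$ if $\mathrm{char}\,F=2$. By Theorem \ref{ksigma} this is a commutative field lying in the kernel of $F$, and it embeds into $\mathrm{End}_H(U)$ by left multiplication: additivity because $\Bbbk_0$ is in the kernel, and commutation with the $H$-action because $(F^*,\cdot)$ is associative. Applying Lemma \ref{centroid} to $G^\dagger\trianglelefteq T_3(F)$, the centre $Z(F^*)=Z(T_3(F)_{0,\infty})$, acting by right multiplication, lies in the centroid $\mathscr C$; and since $x^2\in Z(F)$ for $x\in\Bbbk_0^*$ by Theorem \ref{ksigma}, left and right multiplication by $x^2$ coincide, so $(\Bbbk_0^*)^2\subseteq\Bbbk_0\cap\mathscr C$ inside $\mathrm{End}_H(U)$.

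Now I verify conditions (i)--(iv) of Theorem \ref{main} with $\Bbbk:=\Bbbk_0$. Condition (i) is the dimension hypothesis. For (ii): if $\Bbbk_0$ is infinite then so is $(\Bbbk_0^*)^2$, whence $|\Bbbk_0\cap\mathscr C|=\infty$; if $\Bbbk_0$ is finite then $F$ is finite, and a finite sharply $3$-transitive group with a regular normal subgroup in a point stabiliser is, by Zassenhaus' classification, contained in $\mathrm{P\Gamma L}_2$ of a finite field, so $F$ is already Dickson of the required form and we are done. For (iii): $U$ is generated by the squares $R$ as an $\mathrm{End}_H(U)$-module --- in characteristic $\ne 2$ already as a group by Lemma \ref{squares}, and in characteristic $2$ because $F$ is spanned over $\Bbbk_0=\Bbbk_\sigma$ by $R$ by hypothesis and $\Bbbk_0\subseteq\mathrm{End}_H(U)$. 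For (iv): if $\mathrm{char}\,F\ne 2$ take $\lambda=-1$, so $\lambda$ and $\lambda-1=-2$ lie in the prime field, which is contained in $\Bbbk_0^*\cap\mathscr C$; if $\mathrm{char}\,F=2$, choose $\lambda\in(\Bbbk_0^*)^2\setminus\{0,1\}$ (possible since $(\Bbbk_0^*)^2$ is infinite in the remaining case), note $\lambda-1=\lambda+1\ne 0$, and $\lambda+1\in\mathscr C$ automatically because $1=\mathrm{id}_U\in\mathscr C$. Hence Theorem \ref{main} applies, $(F,\mathscr H,1)$ is a quadratic Jordan division algebra $J$, and $\mathbb M(F,\tau)\cong\mathbb M(J)$.

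Finally, since $\mathbb M(J)$ is Zassenhaus, Corollary \ref{coro1} gives that $J$ is a commutative field $\mathbb L$, or that $\mathbb L^2\subseteq J\subseteq\mathbb L$ for a commutative field $\mathbb L$. The second alternative forces $\mathrm{char}\,F=2$; it is excluded in case (ii) because $R=\{a^2:a\in J\}\setminus\{0\}$ then generates only a subfield squeezed between $\mathbb L^2$ and $\mathbb L$, confining $C_{F^*}(R)$ to that subfield and contradicting the hypothesis that $F$ is generated by $C_{F^*}(R)$ as a nearring; in case (i) it does not occur since $T_3(F)$ is sharply $3$-transitive. So $J$ is the commutative field $\mathbb L$, $G^\dagger\cong\mathrm{PSL}_2(\mathbb L)$ acts on the projective line $X$, and $T_3(F)\le N_{\mathrm{Sym}(X)}(\mathrm{PSL}_2(\mathbb L))=\mathrm{P\Gamma L}_2(\mathbb L)$. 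A sharply $3$-transitive subgroup of $\mathrm{P\Gamma L}_2(\mathbb L)$ containing $\mathrm{PSL}_2(\mathbb L)$ is exactly the group attached to a Dickson coupling $\varphi$ of $\mathbb L$ (cf. \cite{KW}); reading the action of $T_3(F)_{0,\infty}\cong F^*$ on $U$ as $x\mapsto x^{\varphi(y)}y$ (product in $\mathbb L$) identifies the nearfield multiplication with the Dickson product $a\cdot b=a^{\varphi(b)}b$ of Example \ref{dickson}, while $x\tau=-x^\sigma$ together with $x\tau=-x^{-1}$ in $\mathbb M(\mathbb L)$ shows that $\sigma$ is inversion in $\mathbb L$, and the KT-condition on $\sigma$ then yields $\varphi(a)=\varphi(a^{-1})$, so $(F,\sigma)$ is of the form in Example \ref{dickson2}. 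The main obstacle is precisely this characteristic-$2$ endgame: excluding the inseparable alternative $\mathbb L^2\subseteq J\subsetneq\mathbb L$ by means of the near-ring generation hypothesis on $C_{F^*}(R)$, and then checking that the coupling coming from the embedding into $\mathrm{P\Gamma L}_2(\mathbb L)$ carries the symmetry $\varphi(a)=\varphi(a^{-1})$ imposed by the KT-automorphism.
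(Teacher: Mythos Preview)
Your strategy up to the application of Theorem \ref{main}---realise $F$ via the Moufang set of Theorem \ref{kt moufang}, embed $\Bbbk_\sigma$ (respectively $\Bbbk$) into $\mathrm{End}_H(U)$ by left multiplication, and verify the four hypotheses---matches the paper's. Two points, one minor and one substantial, separate your argument from a complete proof.

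\medskip
\textbf{Minor.} In checking condition (iv) of Theorem \ref{main} for $\mathrm{char}\,F=2$, you assert $\lambda+1\in\mathscr C$ ``because $1=\mathrm{id}_U\in\mathscr C$''. The centroid is not known to be additively closed: that would require $h_{aS,aT}=(ST+TS)h_a$, which is essentially the biadditivity still to be proved. Your conclusion survives, since in characteristic $2$ one has $\lambda+1=(x+1)^2\in(\Bbbk_0^*)^2$ whenever $\lambda=x^2$; but the paper sidesteps the issue by computing directly from $xh_b=x\cdot q_b$ and $a^\sigma=a^{-1}$ (for $a\in\Bbbk_\sigma$) that $h_{b\lambda_a}=\lambda_a^2 h_b$, so the \emph{whole} of $\Bbbk_\sigma$ lies in $\mathscr C$, making (ii) and (iv) immediate.

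\medskip
\textbf{Substantial.} Your endgame diverges from the paper's and does not go through as written. Once $(F,\mathscr H,1)$ is a quadratic Jordan division algebra $J$ with $\mathbb M(J)$ Zassenhaus, the paper applies Theorem \ref{abelian}: all $Q_a$ commute. Since $h_a$ is right multiplication by $q_a$, this says precisely that the pseudo-squares generate an abelian subgroup of $F^*$, and \cite{Kerby91}[Theorems 1.2 and 1.3] then deliver the Dickson structure directly. You instead pass through Corollary \ref{coro1} and try to exclude the alternative $\mathbb L^2\subseteq J\subsetneq\mathbb L$ by claiming that $C_{F^*}(R)$ is ``confined to that subfield''. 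This is not justified: in either alternative of Corollary \ref{coro1} the maps $Q_a$ are multiplications by squares in the commutative field $\mathbb L$ and hence commute, so $R$ is already abelian in $F^*$ and $R\subseteq C_{F^*}(R)$; nothing forces $C_{F^*}(R)$ into a proper subset of $F=J$, and no contradiction with the nearring-generation hypothesis arises. Even granting $J=\mathbb L$, your final extraction of a Dickson coupling with $\varphi(a)=\varphi(a^{-1})$ from the inclusion $T_3(F)\le\mathrm{P\Gamma L}_2(\mathbb L)$ is asserted rather than proved, and \cite{KW} does not supply it. The robust route is the paper's: use Theorem \ref{abelian} to obtain commuting pseudo-squares, then invoke Kerby.
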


\begin{proof} 
This is well-known if $F$ is finite, see for example \cite{W}[V, \S 5, (5.1)]. Hence we may assume that $F$ is infinite. By \ref{ksigma} $\Bbbk_\sigma$ is infinite as well.
Let $\mathbb{M}(F)$ be the Moufang set associated to $(F,\sigma)$ and $H$ its Hua subgroup. 
For $a\in \Bbbk$ we define $\lambda_a:F\to F: x\mapsto a\cdot x$. Then $\lambda_a\in \mathrm{End}_H(F)$ for all $a\in \Bbbk$ by \ref{kt moufang}. Moreover, if $a\in \Bbbk_\sigma$, then 
$q_a=a^2\in Z(F)$ by part (ii) of the main theorem of \cite{Kerby}. Therefore, 
for $x\in F$ and $a\in \Bbbk_\sigma$ we have 
\begin{eqnarray*}
xh_{b\lambda_a} &= & xh_{a\cdot b} =x\cdot q_{a\cdot b}
\\ &=&
x \cdot (a\cdot b)^{-\sigma} \cdot a\cdot b
=  x\cdot b^{-\sigma} \cdot a^2 \cdot b \\ &= & x\cdot a^2 \cdot q_b =x\lambda_a^2h_b.
\end{eqnarray*}
Therefore $\lambda_a$ is contained in the centroid. Our main theorem  \ref{main} now implies that $\mathbb{M}(F)$ is the Moufang set for a quadratic Jordan division algebra. 
By \ref{abelian} the pseudo-squares generate an abelian subgroup of $F^*$. The 
claim now follows by \cite{Kerby91}[Theorem 1.2 and Theorem 1.3].
\end{proof}

\begin{remark}\rm Suppose that $(F,\sigma)$ is KT-nearfield with char$F=2$ 
such that $\dim_{\Bbbk_\sigma} F<\infty$ 
and $F$ is generated by $R$ as 
 a $\Bbbk_\sigma^2$-vector space, where $R$ is the set of pseudo-squares in $F$ and 
 $\Bbbk_\sigma^2$ denotes the subfield 
 of squares in $\Bbbk_\sigma$. 
As in the proof of the previous theorem we may conclude that $R$ generates 
an abelian subgroup of $F^*$, hence $R \subseteq C_{F^*}(R)$.   
 By \ref{ksigma} we have $(\Bbbk_\sigma^2)^* 
 \leq Z(F) \leq C_{F^*}(R)$. Thus the nearring generated by $C_{F^*}(R)$ contains 
 the $\Bbbk_\sigma^2$-vector space generated by $R$ and hence equals $F$, therefore 
 the last condition in (ii) can be omitted in this case. In particular, this 
 holds if $\Bbbk_\sigma$ is a perfect field.   
\end{remark}

\noindent The author likes to thank Theo Grundh\"ofer for pointing out the following corollary. Note that condition (ii) is slightly weaker than 
condition (ii) in \ref{coro2} since $Z(G_{x,y}) \subseteq 
\mathrm{End}_{G_{x,y}}(U_x)$. 

\begin{coro} Let $G$ be a sharply $3$-transitive permutation group on a set $X$. Assume that 
\begin{itemize}
\item[\rm (i)] for every $x\in X$ there is a normal abelian subgroup $U_x$ of $G_x$ such that $U_x$ is regular on $X\setminus \{x\}$ and not of exponent $2$ and
\item[\rm (ii)] for $x,y\in X$ distinct, the group $U_x$ is finitely generated as a module over $\mathrm{End}_{G_{x,y}}(U_x)$.
\end{itemize}
Then there is a commutative field $F$ with char$F\ne 2$ and a group $\mathrm{PSL}_2(F) \leq \mathscr{G} \leq \mathrm{P\Gamma L}_2(F)$ such that 
$(X,G)$ and $\big( \mathrm{PG}_1(F), \mathscr{G}\big)$ are isomorphic as 
permutation groups, where $\mathrm{PG}_1(F)$ denotes the projective line over 
$F$.  
\end{coro}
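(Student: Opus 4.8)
The plan is to realise $(X,G)$ as the permutation group attached to a KT-nearfield via the structure theorem relating special Moufang sets in sharply $3$-transitive groups to KT-nearfields, to read off from hypothesis~(ii) that this nearfield has finite dimension over its kernel, and then to invoke the theorem above on KT-nearfields of finite dimension over the kernel together with the example following Theorem~\ref{kt moufang}.

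\textbf{Setting up the Moufang set.} Since $G$ is sharply $3$-transitive it is in particular doubly transitive, so hypothesis~(i) puts us in the situation of Lemma~\ref{2trans}: the family $(U_x)_{x\in X}$ forms a Moufang set $\mathbb{M}:=\big(X,(U_x)_{x\in X}\big)$ whose little projective group $G^\dagger$ is normal in $G$. Fix $x\in X$ and set $U:=U_x$; by assumption $U$ is abelian and not of exponent $2$. If $\mathbb{M}$ is improper, then $G^\dagger$ is sharply $2$-transitive, and as $U$ is abelian we get $G^\dagger\cong\mathrm{Aff}(\Bbbk)$ for a commutative field $\Bbbk$ (as in the proof of Corollary~\ref{coro0}); the translation subgroup of $G^\dagger$ is then a regular normal subgroup of $G$, and a sharply $3$-transitive group possessing such a subgroup has degree at most $4$, so $G\cong\mathrm{PGL}_2(\mathbb{F}_2)$ or $G\cong\mathrm{PGL}_2(\mathbb{F}_3)$. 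The former is excluded because there $U$ has exponent $2$, and the latter gives the conclusion with $F=\mathbb{F}_3$, since $\mathrm{PGL}_2(\mathbb{F}_3)=\mathrm{P\Gamma L}_2(\mathbb{F}_3)\geq\mathrm{PSL}_2(\mathbb{F}_3)$. So from now on $\mathbb{M}$ is proper, and hence special by \cite{S}.

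\textbf{Passing to a KT-nearfield and translating (ii).} Now $G^\dagger\trianglelefteq G$ with $G$ sharply $3$-transitive, so the structure theorem produces a KT-nearfield $(F,\sigma)$ with $(X,G)\cong\big(F\dot{\cup}\{\infty\},T_3(F)\big)$ as permutation groups. Under this identification $U_x$ becomes $(F,+)$, and $G_{x,y}$ becomes $T_3(F)_{0,\infty}\cong F^*$, acting on $(F,+)$ by multiplications (this is Theorem~\ref{kt moufang}, which also records that the Hua maps act as $u\mapsto u\cdot q_a$). Since $(F,+)=U_x$ has no element of order $2$, we have $\mathrm{char}\,F\ne 2$. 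I then claim that $\mathrm{End}_{G_{x,y}}(U_x)$ is canonically identified with the kernel $\Bbbk$ of $F$: an additive endomorphism of $(F,+)$ commuting with every multiplication in the image of $G_{x,y}$ must be a one-sided multiplication by a single element of $F$ (evaluate at $1$), and additivity of that map forces the element into the kernel $\Bbbk$; conversely, every multiplication by a kernel element commutes with the $G_{x,y}$-action, by associativity together with the right distributive law. Hence $U_x$, viewed as a module over $\mathrm{End}_{G_{x,y}}(U_x)$, is just the $\Bbbk$-vector space $F$, so hypothesis~(ii) says exactly that $\dim_\Bbbk F<\infty$.

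\textbf{Conclusion.} We are therefore in case~(i) of the theorem above ($\mathrm{char}\,F\ne 2$ and $\dim_\Bbbk F<\infty$), so $F$ is a Dickson nearfield of the form in Example~\ref{dickson2}: there is a commutative field $K$ and a coupling $\psi\colon K^*\to\mathrm{Aut}(K)$ with $\psi(a)=\psi(a^{-1})$ for all $a$ such that $F=K^\psi$ and $\sigma$ is inversion in $K$. By the example following Theorem~\ref{kt moufang} we have $\mathrm{PSL}_2(K)\leq T_3(K^\psi)\leq\mathrm{P\Gamma L}_2(K)$, and since $F\dot{\cup}\{\infty\}=K\dot{\cup}\{\infty\}=\mathrm{PG}_1(K)$, the permutation group $(X,G)$ is isomorphic to $\big(\mathrm{PG}_1(K),\mathscr{G}\big)$ with $\mathscr{G}:=T_3(K^\psi)$, $\mathrm{PSL}_2(K)\leq\mathscr{G}\leq\mathrm{P\Gamma L}_2(K)$ and $\mathrm{char}\,K\ne 2$; renaming $K$ as $F$ yields the statement. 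The step I expect to demand the most care is the identification of $\mathrm{End}_{G_{x,y}}(U_x)$ with the kernel of $F$, that is, getting the left/right conventions for the action of $G_{x,y}$ on $U_x$ exactly right so that this endomorphism ring really is (or at least is contained in, which already suffices) the kernel; granting that, the rest is an assembly of results already proved.
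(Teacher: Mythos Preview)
Your proposal is correct and follows essentially the same route as the paper: pass to a KT-nearfield, identify $\mathrm{End}_{G_{x,y}}(U_x)$ with the kernel $\Bbbk$ of $F$ (by evaluating at $1$ and using right distributivity), read off $\dim_\Bbbk F<\infty$ from hypothesis~(ii), and invoke the preceding theorem on KT-nearfields. Your explicit treatment of the improper case and your conclusion via the Example after Theorem~\ref{kt moufang} are slightly tidier than the paper's own wrap-up (which goes through $G^\dagger$, abelian two-point stabilisers, and Dieudonn\'e's automorphism theorem), but the substance is the same; your caution about left/right conventions in the kernel identification is well placed, since the paper's stated definition of $t_{a,b}$ and the $\rho_a$ used in its proof differ exactly there.
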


\begin{proof}
We have $G=T_3(F)$ for some KT-nearfield $(F,\sigma)$. We choose an element $\infty\in X$ and identify $F$, $U_\infty$ and $X\setminus \{\infty\}$. 
The group $G_{\infty,0}$ consists of all maps $\rho_a:F\to F: x\mapsto x\cdot a$ with $a\in F^*$. Let $T\in \mathrm{End}_{G_{\infty,0}}(F)$ and set $a:=1 T$. 
We claim that $a$ is contained in the kernel of $F$ and that $xT=a\cdot x$ for all $x\in F$. Indeed, we have 
$xT= 1\rho_x T =1 T\rho_x =a\rho_x= a\cdot x$ for all $x\in F$. Moreover, for $x,y\in F$ we have $a\cdot (x+y) =(x+y)T =xT+yT =a\cdot x +a\cdot y$, hence $a$ is 
in the kernel of $F$. Conversely, if $a$ is in the kernel of $F$, then the map $x\mapsto a\cdot x$ is contained in $\mathrm{End}_{G_{\infty,0}}(F)$. 
By our assumption it follows that $F$ is finite-dimensional over $\Bbbk$. 
Therefore by the previous theorem $F$ is coupled to a commutative field 
which we also call $F$. Moreover, let $G^\dagger$ be the subgroup of $G$ 
generated by all subgroups $U_x$. Then $G^\dagger$ acts Zassenhaus transitively on 
$X$. Since the pseuso-squares generate an abelian subgroup of $F^*$, the 
two-points-stabilisers are abelian. Moreover, by \cite{DST}[Theorem 1.12] either $G^\dagger$ is 
simple or isomorphic to $\mathrm{PSL}_2(2)$ or $\mathrm{PSL}_2(3)$.  
By \ref{coro} it follows that $(X,G^\dagger)$ and $\big( \mathrm{PG}_1(F), 
\mathrm{PSL}_2(F)\big)$ are isomorphic as permutation groups. 
Since $G^\dagger$ is normal in $G$, the claim follows now by \cite{D}[IV \S 6].
\end{proof}

Note that the assumption of the corollary are satisfied if $G$ is a sharply 
$3$-transitve group of even degree $q+1$. This has already been proved in 
Zassenhaus PhD thesis, see \cite{W}[V, \S 5, (5.2)] or \cite{P}[Theorem 20.5].

\end{document}